\newcounter{Theorem}
\numberwithin{equation}{section}
\numberwithin{Theorem}{section}
\newtheorem{theorem}[Theorem]{Theorem}
\newtheorem{corollary}[Theorem]{Corollary}
\newtheorem{lemma}[Theorem]{Lemma}
\theoremstyle{definition}
\newtheorem{definition}[Theorem]{Definition}
\newtheorem{example}[Theorem]{Example}
\theoremstyle{remark}
\newtheorem{remark}[Theorem]{Remark}
\newcommand{\C}{\mathbb{C}}
\newcommand{\N}{\mathbb{N}}
\newcommand{\Q}{\mathbb{Q}}
\newcommand{\Qn}{{\mathbb{Q}^n}}
\newcommand{\R}{\mathbb{R}}
\newcommand{\Rn}{{\mathbb R^n}}
\newcommand{\T}{\mathbb{T}}
\newcommand{\Z}{\mathbb{Z}}
\newcommand{\Zn}{{\mathbb Z^n}}
\newcommand{\dist}{\operatorname{dist}}
\newcommand{\supp}{\operatorname{supp}}
\newcommand{\spa}{\operatorname{span}}
\newcommand{\tr}{\operatorname{tr}}
\newcommand{\ov}{\overline}
\newcommand{\ch}{\mathbf 1}
\newcommand{\al}{\alpha}
\newcommand{\ga}{\gamma}
\newcommand{\Ga}{\Gamma}
\newcommand{\ve}{\varepsilon}
\newcommand{\vp}{\varphi}
\newcommand{\la}{\lambda}
\newcommand{\lan}{\langle}
\newcommand{\ran}{\rangle}
\newcommand{\vol}{\operatorname{vol}}
\newcommand{\bq}{\begin{equation}}
\newcommand{\eq}{\end{equation}}
\begin{document}

\title{Meyer wavelets for rational dilations}

\author{Marcin Bownik}

\address{Department of Mathematics, University of Oregon, Eugene, OR 97403--1222, USA}

\email{mbownik@uoregon.edu}

\keywords{Meyer wavelets, rational dilation}

\thanks{The  author was partially supported by NSF grant DMS-2349756. The author wishes to thank Alexander Polishchuk for suggesting an elegant proof of Lemma \ref{cayley}.}

\subjclass[2000]{Primary: 42C40}
\date{\today}

\begin{abstract}
 We show the existence of smooth band-limited multiresolution analysis (MRA) for any expansive dilation with real entries in any spatial dimension.
We then prove the existence of orthonormal Meyer wavelets, which have smooth and compactly supported Fourier transform, for any expansive dilation with rational entries and any spatial dimension. This extends one dimensional results of Auscher \cite{Au0, Au1}. In a converse direction, we show that well-localized orthogonal MRA wavelets, such as Meyer wavelets, can only exist for expansive dilations with rational entries. This shows the optimality of our existence result and extends one dimensional result of Lemari\'e-Rieusset  \cite{KLR}.
\end{abstract}

\maketitle

% LINE SPACING
%\setlength{\baselineskip}{16pt}

\section{Introduction}\label{S1}

An orthonormal wavelet is a function $\psi$ on $\R$ such that the collection
$\{2^{j/2} \psi(2^jx-k): j\in\Z, k\in\Z\}$ is an orthonormal basis of $L^2(\R)$.
Two most prominent construction of orthonormal wavelets on the real line are Daubechies smooth compactly supported wavelets \cite{Dau0, Dau} and Meyer wavelets with compactly supported $C^\infty$ Fourier transform \cite{Mey0, Mey}. These constructions were preceded by the discovery of the Haar function $\psi = \ch_{[0,1/2]} - \ch_{[1/2,1]}$, which is the earliest example of an orthonormal wavelet \cite{Haa}, and Str\"omberg's spline wavelet with exponential decay \cite{St}, which was shown to form an unconditional basis for Hardy spaces $H^p(\R)$.

%%% from Proposal

The majority of research on higher dimensional wavelets was initially concentrated on isotropic wavelets and the classical function spaces associated with dyadic dilation structure, see \cite{Mey}. By an isotropic wavelet we mean a collection
$\Psi=\{\psi^1,\ldots,\psi^L \} \subset L^2(\R^n)$, (most of the time $L=2^n-1$), such that 
$\{2^{jn/2} \psi^l(2^jx-k): j\in\Z, k\in\Z^n, l=1,\ldots,L\}$ is an orthonormal basis of $L^2(\R^n)$. 
The isotropic wavelets with desired properties can be obtained from the one
dimensional wavelets with the corresponding properties by the tensoring technique due to Gr\"ochenig \cite{Gro0}, see also \cite{Dau, Mey, Woj}. Therefore, the isotropic theory in several dimensions does not cause new
difficulties beyond those appearing in one dimension and is very well understood. The construction of wavelets is based on the concept of a multiresolution analysis (MRA), which was introduced by Mallat \cite{Mall}.

A truly multidimensional wavelet theory takes into account other
possible dilation structures given by the action of an invertible matrix $A\in GL_n(\R)$. 
In this setting an orthonormal wavelet is defined as a collection 
$\Psi=\{\psi^1,\ldots,\psi^L \} \subset L^2(\R^n)$ such that the wavelet (or affine)
system
\begin{equation}\label{affine}
\{ |\det A|^{j/2} \psi^l(A^jx-k): j\in\Z, \ k\in \Gamma, \ 
l=1,\ldots,L \}
\end{equation}
forms an orthonormal basis for $L^2(\R^n)$. Here, $\Gamma$ is a full rank lattice in $\R^n$ that is often chosen to be $\Ga=\Z^n$. Unless the dilation $A$ is very
special, e.g., $A=2\mathbf I$, there is no general procedure of constructing such
wavelets from one dimensional ones.

A significant progress in this direction has been made 
for the class of expansive dilations with integer entries. Recall that a dilation $A$ is expansive if all of its eigenvalues $\la$ satisfy $|\la|>1$.
Gr\"ochenig and Madych \cite{GM}, 
and Lagarias and Wang \cite{LW0, LW3, LW4, LW5} studied the existence of Haar type wavelets 
for integer dilations $A$, which turned out to be a challenging problem with connections in algebraic number theory \cite{LW1, LW2}. The problem of extending Daubechies' construction \cite{Dau} of smooth compactly supported wavelets to higher dimensions was considered by Ayache \cite{Ay1, Ay2}, Belogay, Wang \cite{BW}, and more recently in \cite{FHT}.

A natural class of well-localized wavelets are $r$-regular wavelets introduced by Meyer \cite{Mey}. A function $\psi$ is $r$-regular, where $r=0,1,2, \ldots$, or $\infty$, if $\psi$ is $C^r$ and it has polynomially decaying partial derivatives of order $\le r$. Regular wavelets are an excellent tool to study various function spaces. 
Meyer \cite{Mey} showed that $r$-regular wavelets associated with the dyadic dilation $A=2\mathbf I$ form an unconditional basis for many function spaces, e.g.~Hardy, H\"older, Sobolev, Besov, etc. Similar results were shown by the author for anisotropic function spaces \cite{B1, Bo20, Bo30}.
Therefore, it is important to construct wavelets with nice smoothness and decay properties. 
For any integer dilation $A$, which supports a self-similar tiling of $\R^n$, 
Strichartz \cite{Str} constructed $r$-regular wavelets for all $r\in\N$. However, 
there are examples in $\R^4$ of dilation matrices without self-similar 
tiling \cite{LW1,LW2}. In \cite{B2} the author showed that 
for every integer dilation and $r\in\N$ 
there is an $r$-regular wavelet basis with 
an associated $r$-regular multiresolution analysis. In addition, these 
wavelets can be constructed with all vanishing moments. In a similar vein, Han \cite{Han} showed the existence of $C^r$ wavelets with exponential decay. Finally, Speegle and the author \cite{BS}  
showed that for all integer dilations in two dimensions there exists wavelets  $\infty$-regular Meyer wavelets. Meyer wavelets have $C^\infty$ compactly supported Fourier transform. 

\begin{theorem}\label{intro0}
 For every expansive $2\times 2$ integer dilation $A$,
there exists a wavelet consisting of
$(|\det A|-1)$ band-limited Schwartz class functions.
\end{theorem}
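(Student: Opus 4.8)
The plan is to obtain the wavelet from a smooth band-limited multiresolution analysis (MRA). Writing $q=|\det A|$ and letting $B=A^*$ denote the transpose (which is again expansive with integer entries, since it has the same eigenvalues as $A$), I would first construct a scaling function $\vp$ whose Fourier transform $\hv$ is $C^\infty$ and compactly supported, and then read off the $q-1$ wavelet generators from the associated filters. Because every object is built on the frequency side as a $C^\infty$ function of compact support, the wavelets $\psi^1,\dots,\psi^{q-1}$ are automatically band-limited and of Schwartz class.

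For the scaling function I would seek a real, nonnegative $\hv\in C_c^\infty(\R^2)$ satisfying the two defining conditions of a band-limited MRA: the orthonormality of the integer translates,
\bq\label{ortho}
\sum_{k\in\Z^2}|\hv(\xi+k)|^2=1 \quad\text{for a.e. } \xi,
\eq
and the refinement condition encoding $V_0\subset V_1$, namely the existence of a $\Z^2$-periodic $C^\infty$ low-pass filter $m_0$ with $\hv(\xi)=m_0(B^{-1}\xi)\,\hv(B^{-1}\xi)$. Together with the normalization $\hv(0)=1$ these produce a genuine MRA with $\ov{\bigcup_j V_j}=\eln$ and $\bigcap_j V_j=\{0\}$.

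The geometric heart is the support of $\hv$. I would first build a bounded set $G\subset\R^2$ with $\vol(G)=1$ whose integer translates $\{G+k:k\in\Z^2\}$ tile $\R^2$ and which is strictly nested under dilation, $\ov G\subset\operatorname{int}(BG)$. The indicator $\ch_G$ already yields a (non-smooth, Shannon-type) band-limited MRA: tiling gives \eqref{ortho}, and the inclusion $G\subset BG$ gives the nesting $V_0\subset V_1$. To upgrade $\ch_G$ to a Schwartz scaling function I would thicken $G$ to a smooth profile $\hv$ supported in a small neighborhood of $G$ still contained in $BG$, arranged by a smooth partition of unity so that on each overlap the contributions of neighboring translates add up to preserve \eqref{ortho} exactly, while the containment $\supp\hv\subset BG$ keeps the refinement equation valid with a $C^\infty$ filter $m_0$. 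The annular room $BG\setminus G$, nonempty because the nesting is strict, is precisely what permits this smoothing without breaking either condition.

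The \emph{main obstacle} is the existence of the set $G$. In one dimension one simply takes a symmetric interval, but for a general expansive integer matrix on $\R^2$ the dilation $B$ respects no convex or radial structure, so a set that simultaneously $\Z^2$-tiles and strictly nests under $B$ must be built by hand and adapted to the Jordan structure of $B$. I would assemble $G$ from pieces on which $B$ acts by controlled expansion, organized according to the real canonical form of $B$, and then correct the pieces by $\Z^2$-translations to restore exact tiling. Verifying the tiling, the strict nesting, and enough boundary regularity to admit $C^\infty$ smoothing all at once is the crux, and is where the two-dimensional integer hypothesis enters. Finally, from the smooth filter $m_0$ I would produce the high-pass filters: the polyphase components of $m_0$ form a smooth unit section over the torus $\R^2/\Z^2$, which in the band-limited setting can be completed explicitly to a smooth unitary $q\times q$ matrix $(m_l)_{l=0}^{q-1}$, since the translated supports overlap simply and no topological obstruction arises; this gives $\widehat{\psi^l}(\xi)=m_l(B^{-1}\xi)\,\hv(B^{-1}\xi)\in C_c^\infty(\R^2)$ and hence the desired $q-1$ band-limited Schwartz wavelets.
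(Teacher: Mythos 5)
Your strategy is the one the paper itself uses for \emph{strictly expansive} dilations (Section \ref{S6}): find a compact set $K$ whose lattice translates tile $\R^n$ and which satisfies $K\subset BK^\circ$, mollify $\ch_K$ to obtain a Schwartz scaling function of an MRA of multiplicity one, and then complete the resulting smooth low-pass filter to a unitary matrix (in dimension $2$ the completion is indeed unobstructed, since $2b-1>2$ for $b=|\det A|\ge 2$, cf.\ Theorem \ref{2p}). That part of your plan is sound and matches Theorems \ref{se} and \ref{sew}.

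The genuine gap is the step you yourself flag as the ``main obstacle'': the existence of a bounded set $G$ that simultaneously $\Z^2$-tiles and satisfies $\ov G\subset (BG)^\circ$. This is not merely a technical construction to be ``built by hand'' from the Jordan form of $B$: for some expansive integer $2\times 2$ matrices no such set exists at all, i.e.\ $A$ need not be strictly expansive with respect to $\Z^2$ (the paper states this explicitly at the end of Section \ref{S6}, and it is why Theorem \ref{sew} carries a strict expansiveness hypothesis). The danger cases are low-determinant matrices, e.g.\ with $|\det A|=2$, where $BG$ has only twice the volume of $G$ and the geometry is too tight to fit a tiling set strictly inside its own dilate. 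The cited proof in \cite{BS} therefore cannot and does not proceed purely along your lines; it must classify the expansive integer $2\times2$ matrices up to integral similarity, show that all but an exceptional family are strictly expansive, and then handle the exceptional dilations by a separate argument that does not go through a single nested tiling set $G$. Without an argument covering those exceptional matrices, your proof establishes only the strictly expansive subcase of Theorem \ref{intro0}, not the full statement.
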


Beyond the class of integer dilations, non-existence results for well-localized wavelets associated were obtained by Chui, Shi \cite{CS} and the author \cite{Bo8} in one dimension, and in higher dimensions \cite{Bo10}. Auscher \cite{Au0} proved that there exist Meyer wavelets, which are smooth and compactly supported in the Fourier domain, for every rational dilation factor. Unlike the dyadic case, the number of generators $L$ of a wavelet associated with the rational dilation factor $p/q$, where $p>q \in \Z$ are relatively prime, needs to be a multiple of $p-q$. The author \cite{Bo10} showed the non-existence of well-localized wavelets, including Meyer wavelets, for irrational dilation factors. However, the question of the existence of Meyer wavelets  for non-integer expansive dilations in dimensions $\ge 2$, and the same problem for integer dilations in dimensions $\ge 3$, remained open. Beyond the setting of $\R^n$, Auscher and Hyt\"onen \cite{AuHy, AuHy2} constructed H\"older regular orthonormal wavelet bases in a general space of homogeneous type.

In contrast, much more is known about the existence of frame wavelets for expansive dilations. We say that $\Psi=\{\psi^1,\ldots,\psi^L \} \subset L^2(\R^n)$ is a Parseval wavelet if the corresponding wavelet system \eqref{affine} is a Parseval frame in $L^2(\R^n)$. Gr\"ochenig and Ron \cite{GR}, based on Ron and Shen's method \cite{RS2}, constructed Parseval wavelet frames for any integer dilation matrix, which are generated by compactly supported
functions with arbitrarily high smoothness. A general construction procedure of wavelet frames via multiresolution analysis (MRA) was shown by Daubechies, Han, Ron, and Shen \cite{DHRS}. Thus, construction of smooth compactly supported Parseval frame wavelets (for the class of integer dilations) does not cause difficulties. Likewise, for every expansive dilation $A$ with real entries, one can easily show the existence of a non-orthogonal singly generated Parseval wavelet $\psi$ in the Schwartz class such that $\widehat \psi \in C^\infty$ is compactly supported, see \cite{B1}. In general, the construction of orthonormal wavelets is a more challenging than that of Parseval frame wavelets.

%The goal of this paper is to characterize all expansive dilations $A$ for which there exists Meyer wavelets associated with a multiresolution analysis.

The goal of this paper is to construct orthonormal Meyer wavelets for any expansive dilation $A$ with rational entries and in any spatial dimension. We also show the optimality of our construction by proving that a well-localized  orthonormal wavelet, which is associated with an MRA, can only exist for dilations $A$ with rational entries. There are three main results of the paper. First we show the existence of smooth band-limited MRAs for any expansive dilation with real entries in any spatial dimension.

\begin{theorem}\label{intro1} Let $A$ be a real $n\times n$ expansive dilation. Then, there exists an MRA $\{V_j\}_{j\in\Z}$ of multiplicity $N\in \N$ such that its scaling functions $\{\vp^1, \ldots, \vp^N\} \subset V_0$ are in the Schwartz class. Moreover, each $\widehat \vp_i$ is a $C^\infty$ real-valued function with compact support.
\end{theorem}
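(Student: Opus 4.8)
The plan is to pass to the Fourier side and reduce the theorem to the construction of a finite family of real, $C^\infty$, compactly supported functions $\Phi_i := \widehat{\vp^i}$ that serve as the Fourier transforms of the scaling functions. Writing $B=A^*$ (which is again expansive), the space $V_0=\ov{\spa}\{\vp^i(\cdot-k): 1\le i\le N,\ k\in\Zn\}$ together with its dilates $V_j=\{f: f(A^{-j}\cdot)\in V_0\}$ forms an MRA of multiplicity $N$ precisely when $\Phi=(\Phi_1,\ldots,\Phi_N)$ satisfies three Fourier-domain conditions: (O) the orthonormality $\sum_{k\in\Zn}\Phi_i(\xi+k)\ov{\Phi_{i'}(\xi+k)}=\delta_{i,i'}$ for a.e.\ $\xi$; (R) a refinement relation $\Phi_i(\xi)=\sum_{i'}m_{i,i'}(B^{-1}\xi)\,\Phi_{i'}(B^{-1}\xi)$ encoding $V_0\subset V_1$, with each mask entry $m_{i,i'}$ being $\Zn$-periodic; and (D) a low-frequency normalization ensuring $\ov{\bigcup_j V_j}=\eln$ and $\bigcap_j V_j=\{0\}$. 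Since each $\Phi_i\in C^\infty$ is compactly supported, the $\vp^i$ are automatically Schwartz, so the smoothness and band-limitedness assertions come for free once (O), (R), (D) are secured.

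First I fix the geometry using expansiveness of $B$. There is a symmetric ellipsoid $\Delta$ with $\ov\Delta\subset\mathrm{int}(B\Delta)$, $\bigcup_{j\in\Z}B^j\Delta=\Rn$, and $\bigcap_{j}B^j\Delta=\{0\}$. Replacing $\Delta$ by $B^m\Delta$ for $m$ large, I may assume the transition shell $B\Delta\setminus\Delta$ is so thick that $\dist(\partial\Delta,\partial(B\Delta))$ exceeds the diameter of a fundamental domain of $\Zn$; this slack is what will later absorb the mismatch between $\Zn$ and the non-integer lattice $B^{-1}\Zn$. I then choose a single $C^\infty$ low-pass profile $\Theta\colon\Rn\to[0,1]$ with $\Theta\equiv1$ on $\Delta$ and $\supp\Theta\subset B\Delta$; in particular $B^{-1}\supp\Theta\subset\Delta$, so $\Theta(B^{-1}\xi)=1$ for every $\xi\in\supp\Theta$, which is the higher-dimensional analogue of the defining property of the one-dimensional Meyer profile.

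Next I produce the scaling functions and enforce (O). The integer $N$ is dictated by how many $\Zn$-cosets meet $\supp\Theta$: I split the frequency mass carried by $\Theta$ into $N$ pieces supported on smoothly fattened $\Zn$-tiles and assemble them into a candidate vector $\Phi$. Orthonormality is then obtained by fiber orthonormalization. The fiber Gram matrix $\mathcal G(\xi)=\big(\sum_{k}\Phi_i(\xi+k)\ov{\Phi_{i'}(\xi+k)}\big)_{i,i'}$ is $\Zn$-periodic, Hermitian, $C^\infty$, and positive definite by construction, so its inverse square root $\mathcal G^{-1/2}$ is $C^\infty$ and $\Zn$-periodic; replacing $\Phi$ by $\mathcal G^{-1/2}\Phi$ yields (O) while preserving real-valuedness, smoothness, and, crucially, compact support, since $\mathcal G^{-1/2}$ mixes only the finitely many cosets within the original band. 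Condition (D) is the soft part: density follows from $\bigcup_j B^j\Delta=\Rn$ together with $\Theta(0)=1$, and triviality of the intersection from the boundedness of $\supp\Phi$ together with $\bigcap_j B^j\Delta=\{0\}$.

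The heart of the matter, and where I expect the main obstacle, is the refinement relation (R). For integer dilations one has $B\Zn\subset\Zn$, the support of each $\Phi_i$ collapses into a single period under $\xi\mapsto B^{-1}\xi$, and the scalar mask is read off exactly as in Meyer's and the classical higher-dimensional constructions. For non-integer, in particular irrational, $B$ the lattice $B^{-1}\Zn$ is no longer a refinement of $\Zn$, the $B\Zn$-translates of the enlarged $\supp\Theta$ genuinely overlap, and no scalar periodic mask can exist; this is exactly what forces $N>1$. The difficulty is to arrange the $N$ pieces of $\Phi$ into $\Zn$-translation classes so that the overlapping contributions $\Phi_{i'}(B^{-1}(\xi+k))$ organize into a genuinely $\Zn$-periodic matrix-valued mask $m(\eta)=\big(m_{i,i'}(\eta)\big)$. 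The slack built into the transition shell guarantees that on $\supp\Phi$ the contracted arguments $B^{-1}\xi$ land where $\Theta\equiv1$, so the mask entries are governed by finitely many coset shifts and periodicity reduces to a finite linear-algebraic compatibility among the pieces; verifying that these pieces can be chosen smoothly and consistently so that this compatibility holds \emph{simultaneously} with (O) is the crux. Once (O), (R), and (D) are established, taking each $\vp^i$ to be the inverse Fourier transform of $\Phi_i$ and defining $V_j$ as above produces the desired Schwartz-class, band-limited MRA of multiplicity $N$.
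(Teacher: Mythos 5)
There is a genuine gap, and you have in fact located it yourself: the simultaneous validity of your conditions (O) and (R) is exactly the content of the theorem, and your proposal does not establish it. Two concrete problems. First, the fiber orthonormalization step is not sound as stated: if the pieces $\Phi_i$ are obtained by cutting the profile $\Theta$ along fattened $\Zn$-tiles, then for tiles meeting the region where $\Theta<1$ (near $\partial(B\Delta)$) the periodization $\sum_{k}|\Phi_i(\xi+k)|^2$ vanishes for some $\xi$, so the Gram matrix $\mathcal G(\xi)$ is singular there and $\mathcal G^{-1/2}$ need not exist, let alone smoothly. Second, and more seriously, even granting (O) you give no argument that the orthonormalized vector satisfies a periodic matrix refinement equation; for irrational $B$ the lattice $B^{-1}\Zn$ is incommensurable with $\Zn$, and the ``finite linear-algebraic compatibility'' you invoke is precisely the obstruction that makes the problem nontrivial. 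A proof must either construct the mask or avoid it, and your sketch does neither.

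The paper avoids both difficulties by a different mechanism. Section \ref{S2} builds, once and for all, an orthonormal basis of all of $L^2(\Rn)$ of shift-invariant type (the symmetric frequency supported, Wilson-like system $E^{\Zn}(\mathcal W_n(\phi))$ of Theorem \ref{wiln}), whose generators have $C^\infty$ compactly supported Fourier transforms living on half-integer cubes. The scaling vector $\Phi$ is then simply the finite subfamily of generators whose Fourier supports meet an expanding ellipsoid $\mathcal E$ for $B=A^\top$; orthonormality of $E^\Gamma(\Phi)$ is automatic because it is a subfamily of an orthonormal basis, so no Gram correction is ever needed. Refinability is obtained without any mask: after rescaling so that $\dist(\mathcal E,\Rn\setminus B(\mathcal E))>\sqrt n$, one gets the sandwich $\check L^2(\mathcal E)\subset V_0\subset \check L^2(B(\mathcal E))$, whence $V_j\subset D_{A^{j+1}}(\check L^2(\mathcal E))\subset V_{j+1}$. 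The alignment between the axis-parallel cubes of the basis and the ellipsoid is handled by Lemmas \ref{cayley} and \ref{sz} (Cayley transform: choose $\mathcal E$ with rational principal axes, $U\in O(n,\Q)$), and the passage from the auxiliary rational lattice $U^{-1}\Zn$ back to $\Zn$ is Lemma \ref{req}, at the cost of enlarging the multiplicity $N$. If you want to salvage your approach, the lesson is to select the $\Phi_i$ from a global orthonormal SI system and to replace the mask computation by a support inclusion.
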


Theorem \ref{intro1} appears to be new already in one dimension. On the surface, Theorem \ref{intro1} contradicts a result of Auscher \cite[Theorem 1]{Au1}, which limits the existence of MRAs on the real line with a well-localized scaling function to rational dilation factors. However, the result in \cite{Au1} is merely incorrectly stated. The correct formulation can be found in Auscher's Ph.D. thesis \cite[Proposition 1.3]{Au0}, which does not lead to any contradiction. 
%This misapprehension might have be the reason which inhibited an earlier discovery of Theorem \ref{intro1}.
Using Theorem \ref{intro1} we show the existence of Meyer wavelets for expansive dilations with rational entries. Theorem \ref{intro3} is an extension of Auscher's result \cite{Au0, Au1} who proved the existence of rationally dilated Meyer wavelets in one dimension.

\begin{theorem}\label{intro2} Let $A$ be a rational $n\times n$ expansive dilation. Then, for some $L\in \N$, there exists an orthonormal wavelet $\Psi=\{\psi^1,\ldots, \psi^{L}\}$ in the Schwartz class, which is associated with an MRA as in Theorem \ref{intro1}. Moreover, each $\widehat \psi^i$ is a $C^\infty$ function with compact support.
\end{theorem}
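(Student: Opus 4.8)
The plan is to obtain the wavelets as an orthonormal generating set of the wavelet space $W_0:=V_1\ominus V_0$, and to carry out the whole construction on the Fourier side, where Theorem \ref{intro1} turns every relevant object into a smooth, compactly supported, matrix-valued field over a torus. First I take the MRA $\{V_j\}_{j\in\Z}$ and Schwartz scaling functions $\vp^1,\dots,\vp^N$ supplied by Theorem \ref{intro1}. Because $A$ is rational, $\Z^n$ and $A\Z^n$ are commensurable, so $\Ga:=\Z^n+A\Z^n$ is a lattice; I use the fact that the band-limited MRA of Theorem \ref{intro1} can be arranged so that $V_0$ is invariant under $\Ga$, which forces $V_1=D_AV_0$ to be $\Z^n$-shift invariant and hence makes $W_0$ a band-limited $\Z^n$-shift invariant space. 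It then suffices to find $\psi^1,\dots,\psi^L$ whose $\Z^n$-translates form an orthonormal basis of $W_0$: applying the dilations $D_A^j$ carries such a basis to an orthonormal basis of each $W_j=D_A^jW_0=V_{j+1}\ominus V_j$, and the MRA telescoping $\eln=\bigoplus_j W_j$ then yields exactly the affine system \eqref{affine} with $\Ga=\Z^n$ as an orthonormal basis of $\eln$.

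Next I would pass to Helson's fiberization of $\Z^n$-shift invariant spaces, assigning to a.e.\ $\xi\in\Tn$ the fibers $J_{V_0}(\xi),J_{V_1}(\xi)\subset\ell^2(\Z^n)$ spanned by the sequences $(\widehat{\vp^i}(\xi+k))_{k\in\Z^n}$ and by the analogous sequences for $D_A\vp^i$. Since each $\widehat{\vp^i}$ is $C^\infty$ with compact support, every fiber vector has only finitely many nonzero, smoothly varying entries, so $\xi\mapsto J_{V_0}(\xi)$ and $\xi\mapsto J_{V_1}(\xi)$ are $C^\infty$ Hermitian subbundles of a fixed trivial finite-rank bundle, of constant ranks $N$ and $N|\det A|$ (here I choose $N$ divisible by the denominator of $|\det A|$ so that $N|\det A|\in\N$). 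Orthonormality of the scaling functions says precisely that the $\widehat{\vp^i}$ give a global smooth orthonormal frame of $J_{V_0}$, and a parallel description using coset translates of the band-limited functions $D_A\vp^i$ gives one for $J_{V_1}$. Constructing the wavelets is thereby reduced to choosing a $C^\infty$, $\Z^n$-periodic orthonormal frame $\{u_1(\xi),\dots,u_L(\xi)\}$ of the orthogonal complement bundle $J_{W_0}(\xi):=J_{V_1}(\xi)\ominus J_{V_0}(\xi)$, of rank $L=N(|\det A|-1)$; one then defines $\widehat{\psi^l}$ by reading $u_l(\xi)$ as the fiber $(\widehat{\psi^l}(\xi+k))_k$, and smoothness of the frame together with the compact frequency support of the $\vp^i$ makes each $\widehat{\psi^l}$ a $C^\infty$ compactly supported function, so that $\psi^l$ lies in the Schwartz class.

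The hard part, and the sole point where the dimension enters, is this smooth orthonormal completion: producing a globally $C^\infty$ orthonormal frame of $J_{W_0}$ over $\Tn$, equivalently, smoothly completing the isometric inclusion $J_{V_0}(\xi)\hookrightarrow J_{V_1}(\xi)$ to a field of unitaries whose new columns are the $u_l(\xi)$. In dimension one every complex vector bundle over $\T$ is trivial and the frame exists for the minimal $L$, recovering Auscher's count; for $n\ge2$ the complement bundle $J_{W_0}$ may be topologically nontrivial, which is exactly the obstruction to a global smooth frame and the reason the statement only claims existence for some $L$. I expect to defeat the obstruction by enlarging the multiplicity: since $J_{V_0}$ and $J_{V_1}$ are trivial, $J_{W_0}$ is stably trivial, and replacing the MRA by a suitable ampliation raises the rank $L$ until standard obstruction theory forces $J_{W_0}$ to be honestly trivial. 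The remaining and most delicate issue is to realize this trivialization by an actual $C^\infty$ periodic frame rather than a merely continuous or measurable one, and this is where I would invoke Lemma \ref{cayley}: the Cayley transform sends a smooth skew-Hermitian field to a smooth unitary field and provides a smooth intertwiner between the projection $\xi\mapsto P_{J_{W_0}(\xi)}$ and a constant model projection, transporting a fixed orthonormal basis of the model fiber to the desired smooth sections $u_l(\xi)$. Once the frame is in hand, verifying the orthonormality, the orthogonality to $V_0$, the spanning of $W_0$, and the Schwartz and band-limited properties is routine, so the whole difficulty is concentrated in the smooth unitary completion that Lemma \ref{cayley} is designed to supply.
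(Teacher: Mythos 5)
Your overall architecture matches the paper's: take the $(A,\Gamma)$-MRA with $\Gamma=\Z^n+A\Z^n$ from Theorem \ref{MRA}, reduce to producing a smooth orthonormal generating set for $W_0=V_1\ominus V_0$ as a $\Z^n$-SI space, recognize this as a smooth unitary matrix completion problem over $\T^n$, and defeat the topological obstruction by taking the multiplicity large. But the step where all the difficulty is concentrated --- producing the global $C^\infty$ orthonormal frame of the complement bundle --- is not actually carried out, and the tool you invoke for it is the wrong one. Lemma \ref{cayley} states only that $O(n,\Q)$ is dense in $O(n,\R)$; in the paper it is used solely to find an expanding ellipsoid with rational principal axes in the construction of the MRA (Lemma \ref{sz}), and it has no bearing on matrix completion. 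The Cayley parametrization is a local chart on the unitary group (it misses unitaries with eigenvalue $-1$), so it cannot by itself produce a globally defined smooth intertwiner between $\xi\mapsto P_{J_{W_0}(\xi)}$ and a constant model projection; whether such a global intertwiner exists is exactly the obstruction you are trying to overcome, and asserting that the Cayley transform ``supplies'' it begs the question. What the paper actually uses here is the Ashino--Kametani completion theorem (Theorem \ref{AK}): $d$ given $C^\infty$ orthonormal rows $X\to\C^m$ over a compact $n$-manifold extend to a $C^\infty$ unitary field provided $n\le 2(m-d)$, and the multiplicity $N$ is chosen large enough that this inequality holds (Theorem \ref{woj}). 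Your ``stably trivial, hence trivial for large rank'' heuristic is the right idea in spirit, but it must be made quantitative and then upgraded from a continuous to a smooth periodic frame; without citing or proving a statement of the strength of Theorem \ref{AK}, the proof is incomplete at its central point.

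A second, more mechanical error: you treat $E^{\Z^n}(\vp^1,\dots,\vp^N)$ as an orthonormal basis of $V_0$ and assign $J_{V_0}(\xi)$ rank $N$. Theorem \ref{MRA} gives orthonormality of $E^{\Ga}(\Phi)$ over the larger lattice $\Ga=\Z^n+A\Z^n$; for non-integer rational $A$ the $\Z^n$-translates alone do not span $V_0$, and the $\Z^n$-fiber of $V_0$ has rank $Nq$ with $q=|\Ga/\Z^n|$, framed by the coset translates $T_{k_j}\vp^l$ as in \eqref{p0}. Correspondingly $J_{V_1}$ has rank $Np$ with $p=|\Ga/A\Z^n|=q\,|\det A|$, which is automatically an integer --- so your device of choosing $N$ divisible by the denominator of $|\det A|$ is a symptom of the miscount rather than a fix --- and the number of wavelet generators is $L=N(p-q)=Nq(|\det A|-1)$, not $N(|\det A|-1)$. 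This does not threaten the truth of the statement, which only asserts existence for some $L$, but the bookkeeping must be done over the correct lattice for the unitary completion problem (Theorem \ref{cri}) to be well posed.
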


Finally, we show the optimality of Theorem \ref{intro2}. That is, we prove that well-localized wavelets, such as Meyer wavelets, which are associated with an MRA, can only exist for dilations with rational entries. Theorem \ref{intro3} is an extension of the result of Lemari\'e-Rieusset  \cite[Chapter 3]{KLR} who proved the non-existence of well-localized MRA wavelets for irrational dilation factors on the real line.

\begin{theorem}\label{intro3}
Let $A$ be $n\times n$ real expansive matrix. Suppose that there exists %$(A,\Z^n)$-
an orthonormal wavelet $\{\psi^1,\ldots, \psi^L\} \subset L^2(\R^n)$ such that  each $\widehat \psi^i$ is a $C^\infty$ function with compact support and the space of negative dilates 
\begin{equation*}
V_0= \ov{\spa}\{ D_{A^j} T_k \psi^l: j<0, k\in \Z^n, l=1,\ldots,L\}
\end{equation*}
is shift invariant. Then, $A$ is a rational matrix.
\end{theorem}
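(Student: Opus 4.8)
The plan is to analyze the group of translations leaving the core space $V_0$ invariant, and to show that once $A$ is irrational this group becomes too large to be compatible with $V_0$ being band-limited and generated by smooth wavelets. Write $D_Af=|\det A|^{1/2}f(A\,\cdot\,)$ and set $V_j=D_{A^j}V_0$ and $W_0=\ov{\spa}\{T_k\psi^l:k\in\Zn,\ l=1,\dots,L\}$. Since $D_AD_{A^j}T_k=D_{A^{j+1}}T_k$, the space $V_1=D_AV_0$ equals $\ov{\spa}\{D_{A^j}T_k\psi^l:j\le 0\}$, and orthonormality of the wavelet system gives the orthogonal splitting $V_1=V_0\oplus W_0$. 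Both $W_0$ and, by hypothesis, $V_0$ are invariant under the integer translations $T_k$, $k\in\Zn$, and hence so is $V_1$.

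Next I would introduce the shift-invariance group $\Lambda=\{t\in\Rn:T_tV_0=V_0\}$, a closed subgroup of $\Rn$ containing $\Zn$. From the identity $T_tD_A=D_AT_{At}$ one computes $\{t:T_tV_1=V_1\}=A^{-1}\Lambda$; combined with $\Zn\subseteq\{t:T_tV_1=V_1\}$ from the previous paragraph, this yields $A\Zn\subseteq\Lambda$, so that
\begin{equation*}
\Zn+A\Zn\subseteq\Lambda .
\end{equation*}
The arithmetic input is that $\Zn+A\Zn$ is discrete if and only if $A$ is rational: a discrete subgroup containing $\Zn$ is a lattice of finite index over $\Zn$, hence contained in $\tfrac1d\Zn$ for some $d\in\N$, which forces the columns of $A$ to be rational. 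Arguing by contraposition, I assume $A$ is \emph{not} rational. Then $\Zn+A\Zn$ is not discrete, so its closure — which lies in the closed group $\Lambda$ — contains a nontrivial subspace $V'\subseteq\Rn$ with $\dim V'\ge 1$. By strong continuity of $t\mapsto T_t$ and closedness of $V_0$, the space $V_0$ is invariant under the entire one-parameter group $\{T_t:t\in V'\}$.

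The third ingredient is band-limitedness. Every $f\in V_0=\ov{\bigoplus_{j\ge 1}D_{A^{-j}}W_0}$ has $\widehat f$ supported in $\Omega:=\ov{\bigcup_{j\ge 1}B^{-j}S}$, where $B=A^{*}$ is the transpose of $A$ and $S=\bigcup_l\supp\widehat{\psi^l}$; since $B$ is expansive, $B^{-j}S\to\{0\}$ and $\Omega$ is bounded. On the Fourier side, invariance of $V_0$ under $\{T_t:t\in V'\}$ says that $\widehat{V_0}$ is a module over multiplication by every bounded function of $t\cdot\xi$ with $t\in V'$, so $\widehat{V_0}$ admits a direct-integral decomposition along the directions of $V'$ in which the fibers are ``all or nothing.'' Together with the bounded support $\Omega$ and the standard relations $\bigcap_jV_j=\{0\}$, $\ov{\bigcup_jV_j}=\eln$, this pins $V_0$ down to a minimally-supported-frequency space $V_0=\{f:\supp\widehat f\subseteq E\}$ for a bounded set $E$ (fully translation invariant along the directions of $V'$). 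Propagating this through the two-scale identity $W_0=D_A\big(V_0\ominus D_{A^{-1}}V_0\big)$ forces $W_0=\{f:\supp\widehat f\subseteq BE\setminus E\}$, i.e. the generators $\psi^l$ must themselves be minimally supported in frequency.

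The main obstacle is the final contradiction, and it is precisely here that the regularity of the wavelets — rather than mere band-limitedness — must be used. The previous paragraph forces the orthogonal projection onto the $\Zn$-shift-invariant space $W_0$ to be multiplication by the indicator $\ch_{BE\setminus E}$ on the Fourier side, a function that jumps across the boundary of $BE\setminus E$. On the other hand, since each $\widehat{\psi^l}$ is $C^\infty$ with compact support, the fiber Gram entries $[\widehat{\psi^l},\widehat{\psi^m}](\xi)=\sum_{k\in\Zn}\widehat{\psi^l}(\xi+k)\ov{\widehat{\psi^m}(\xi+k)}$ are finite sums of continuous functions, hence continuous, and the projection built from them cannot reproduce the sharp cutoff $\ch_{BE\setminus E}$ on a set of positive measure. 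Making this incompatibility rigorous — ruling out that continuous, compactly supported generators span a space of the form $\{f:\supp\widehat f\subseteq F\}$ with $\ch_F$ genuinely discontinuous — is the technical heart of the argument. I expect this to be the hard part, and it is the step that genuinely distinguishes smooth wavelets from the minimally-supported-frequency wavelets that exist for every expansive dilation; the remaining reductions above are soft functional-analytic and arithmetic facts.
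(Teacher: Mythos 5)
Your opening reductions are sound and coincide with the paper's: from $V_1=V_0\oplus W_0$ you correctly deduce that $V_0$ is invariant under $\Z^n+A\Z^n$ (this is Lemma \ref{exp}), and the observation that $\Z^n+A\Z^n$ is discrete iff $A$ is rational, so that in the irrational case the closure of the invariance group contains a nontrivial subspace $V'$, is exactly how the paper sets up Lemma \ref{irr}. The proof breaks at the next step. Invariance of $V_0$ under all translations along a \emph{proper} subspace $V'$ gives a direct-integral decomposition $\widehat{V_0}=\int_{V'}^{\oplus}J(s)\,ds$ with fibers $J(s)\subseteq L^2(s+V'^{\perp})$, but these fibers are arbitrary closed subspaces; they are not ``all or nothing,'' and nothing forces $V_0$ to have the form $\{f:\supp\widehat f\subseteq E\}$. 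A counterexample to the claim you actually use: in $\R^2$ with $V'=\R e_1$, the space $\check L^2([0,1])\otimes\ov{\spa}\,E^{\Z}(h)$, with $h$ a band-limited Meyer-type generator whose principal SI space is not MSF, is $\Z^2$-SI, band-limited, finitely generated, invariant under every $T_{te_1}$, and is not an MSF space. You invoke $\bigcap_jV_j=\{0\}$ and $\ov{\bigcup_jV_j}=L^2(\R^n)$ as additional leverage but give no argument for how they would exclude such fibers. Since your final contradiction rests entirely on $W_0$ being forced into MSF form, and you yourself leave that last step as ``the technical heart'' without executing it, there is no complete chain from hypothesis to conclusion. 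You also never establish that $V_0$ is a \emph{finitely generated} SI space with well-localized generators; in the paper this is the content of Theorems \ref{lr} and \ref{ipt} (Lemari\'e-Rieusset's invariant projection theorem) and it is indispensable input for any quantitative argument.

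The paper's mechanism is genuinely different and avoids MSF spaces altogether. After producing an \emph{orthonormal} $\Z^n$-basis of $V_0$ with generators $\vp^1,\ldots,\vp^N$ in $L^2((1+\rho_A)^\eta)$ (orthogonalization preserves localization because the Beurling--Wiener algebra $K_\omega$ is inverse closed), one considers the partial sum $\sum_{j}\sum_{k\in\Z^n\cap\Lambda^\perp}|\widehat{\vp^j}(\xi+k)|^2$. Extra invariance under the subspace $\Lambda$ forces this to be a.e.\ integer valued (Lemma \ref{esi}, via the spectral function), while the Coifman--Meyer lemma (Lemmas \ref{CM} and \ref{cm}) makes the series converge uniformly on compacta to a continuous, hence constant, function $m\in\N$; summing over the infinitely many cosets of $\Z^n\cap\Lambda^\perp$ in $\Z^n$ then contradicts $\sum_{j}\sum_{k\in\Z^n}|\widehat{\vp^j}(\xi+k)|^2=N$. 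This integrality-versus-continuity dichotomy is the ingredient your proposal is missing, and it is what lets the argument work without ever pinning down the shape of $V_0$.
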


In Section \ref{S2} we construct a shift invariant (SI) basis of $L^2(\R^n)$, which is reminiscent of (though distinct from) the Wilson system \cite{DJJ}. The generators of our basis have smooth compactly supported Fourier transforms whose supports produce a symmetric covering of the frequency space. The construction is also related to the local Fourier basis, which was introduced by Coifman and Meyer \cite{CM2} and studied systematically by Auscher, Weiss, and Wickerhauser \cite{AWW}. In the analogy with local Fourier basis, generators of our SI basis  can have arbitrarily small overlaps of frequency supports. 

We prove Theorem \ref{intro1} in Section \ref{S3}. Using Cayley's transformation we first show the existence of an ellipsoid $\mathcal E$ in $\R^n$, which is expansive with respect to the action of the dilation $A$, such that principal axes of $\mathcal E$ have rational coordinates. An appropriate cover of $\mathcal E$ by supports of generators of our SI system, see Figure 2, produces an MRA of possibly high multiplicity which is associated to some rational lattice $\Gamma \subset \Q^n$  of translates. This is then use to show the existence of an MRA of possibly even higher multiplicity which is associated with the standard lattice $\Z^n$ of translates. Then, a standard scaling argument shows the existence of an MRA associated with an arbitrary lattice of translates.

In Section \ref{S4} we restrict our attention to rational dilations in order to develop construction procedure of wavelets out of an MRA. Such constructions are well understood for dilations with integer dilations \cite{B2, CHM, Han, KLR, Woj}. To extend the MRA wavelet construction procedure to rational dilations, it becomes compulsory to assume extra shift-invariance on the core space $V_0$ of an MRA by the result of Hoover and the author \cite{BH}. Namely, we need to enlarge the standard lattice $\Z^n$ to include all translates in $\Gamma=A\Z^n +\Z^n$. The existence of such MRA in the Schwartz class is guaranteed by Theorem \ref{intro1}. Once this framework is adopted, the construction of rationally dilated wavelets follows a similar scheme as that for integer dilations. The refinability of the scaling vector of an MRA implies the existence of a matrix-valued low-pass filter. The corresponding wavelets are then constructed by
appropriate choice of a matrix-valued low-pass filter. This leads to the matrix completion problem, which asks for extending prescribed orthogonal row functions to a square matrix function which is unitary. While measurable matrix extension always works, there are topological obstructions for the existence of continuous matrix extensions, see \cite{CMX, PR}. Despite this, by exploiting the existence of an MRA of arbitrary high multiplicity we apply the matrix completion result of Ashino and Kametani \cite{AK} to deduce Theorem \ref{intro2} in Section \ref{S5}.

In Section \ref{S6} we generalize the results of Speegle and the author \cite{BS} on the existence  MRAs and Meyer wavelets associated with strictly expansive dilations with integer entries to the setting of non-integer dilations. We show that MRAs of multiplicity 1 exist for  strictly expansive dilations with real entries. This implies the existence of Meyer wavelets for rational dilations $A$, which are strictly expansive, with the smallest possible number of generators. In light of Theorem \ref{intro0} the problem of characterizing dilations $A$ for which there exist Meyer wavelets with a minimal number of generators remains open in dimensions $\ge 3$ for integer dilations.

Finally, in Section \ref{S7} we show the optimality of Theorem \ref{intro2}. Meyer wavelets associated with an MRA can only exist for dilations with rational entries. This is done by extending techniques on shift-invariant operators with localized kernels, which were developed by Lemari\'e-Rieusset \cite{KLR, Lem2}. This involves combining several results such as: a crucial lemma on anisotropic Sobolev spaces associated with Beurling weights due to Coifman and Meyer \cite{CM1}, an extension of the extra invariance lemma of Lemari\'e-Rieusset \cite{KLR} to higher dimensions, results on spaces with extra shift invariance \cite{ACP}, and properties of the spectral function of SI spaces \cite{BR1, BR2}.

\section{
Symmetric frequency supported shift invariant basis in $L^2(\Rn)$
}\label{S2}

In this section we construct a shift invariant (SI) basis of $L^2(\R^n)$ that plays a key role in the subsequent construction of Meyer wavelets for rational dilations. The generators of our SI basis have symmetric supports in frequency, which is reminiscent of the Wilson system \cite{DJJ}. The novel feature of our construction is that SI generators have frequency supports that are arbitrarily close to minimal, which correspond to the symmetric partition of the frequency domain $\R$ into sets 
\begin{equation}\label{sypa}
[-j/2-1/2,-j/2] \cup [j/2, j/2+1/2], \qquad j\ge 0.
\end{equation}

%minimal frequency supports. Hence, it is optimal with respect to minimality of frequency support

The famous Balian-Low Theorem \cite{Gro, HW} states that a window $g\in L^2(\R)$ of Gabor system  $\{T_k M_m g \}_{k,m\in \Z}$, which forms an orthonormal basis of $L^2(\R)$, can not have simultaneously good time and frequency (Fourier transform) localization. Here, the translation operator $T_{k}$ by $k\in \R$ and the modulation operator $M_{m}$ by $m\in \R$ are defined as
\[
 T_{k} f(x) = f(x-k), \ \ M_{m} f(x) = e^{2\pi i x  m} f(x), \ \ f\in L^{2}(\R), \, x \in \R.
\]
The Wilson basis is a clever modification of the Gabor system that overcomes this limitation by taking simple linear combinations of time-frequency shifts of a window function. The Wilson system with a window $g \in L^2(\R)$ is defined as a shift-invariant system with generators
\begin{equation}\label{wilson0}
 g, \quad   \{\tfrac{1}{\sqrt{2}} (M_{m}+(-1)^m M_{-m})g \}_{m\in \N}, \quad
\{\tfrac{1}{\sqrt{2}} T_{1/2} (M_{m}-(-1)^m M_{-m})g \}_{m\in \N}.
\end{equation}
If the window function $g$ is sufficiently localized in frequency, then apart from the pure translations $\{T_{k} g\}_{k\in\Z}$, the Wilson system produces a
symmetric covering of the frequency line, in the sense that each element of
the system has two peaks around $\xi=\pm m$ in its spectrum.

Daubechies, Jaffard, and Journ\'e \cite{DJJ} constructed a Wilson orthonormal basis consisting of functions with exponential decay both in time and frequency. Auscher \cite{Au2} showed that the Wilson basis can be placed in a general framework of local Fourier bases. A  comprehensive exposition of local Fourier bases using smooth orthogonal projections can be found in the book of Hern\'andez and Weiss \cite[Chapter 1]{HW}. These include local sine and cosine bases of Coifman and Meyer \cite{CM2}, which were also studied by Malvar \cite{Mal} in connection to signal processing, and then used to construct smooth wavelets by Auscher, Weiss, and Wickerhauser \cite{AWW}. Laeng \cite{Lae} gave a closely related construction of orthonormal basis of $L^2(\R)$ whose elements have good time-frequency localization and supports adapted to any symmetric partition of frequency space. Wilson bases in higher dimensions were studied in \cite{BJLO}. The construction of symmetric frequency supported basis is motivated by and has some common elements with these developments.

We start by defining a symmetric frequency supported SI system in $L^2(\R)$.

\begin{definition}\label{wilson}
Let $\phi:\R \to \C$ be a $C^\infty$ function such that:
\begin{enumerate}[(i)]
%\item $\phi$ is even, $\phi(\xi)=\phi(-\xi)$ for all $\xi\in\R$,
\item $\supp \phi \subset [-1/4-\delta,1/4+\delta]$ for some $0<\delta\le 1/4$,
\item $\sum_{k\in\Z} |\phi(\xi+k/2)|^2=1$ for all $\xi\in\R$,
\item $\phi^{(m)}(0)=0$ for all $m\ge 1$.
\end{enumerate}

Define the set of generators $\mathcal W(\phi)=\{f_j: j=0,1, \ldots \}$ by
\begin{align}
\label{fj0}
\widehat f_0(\xi) &= \begin{cases}
\phi(\xi-1/4) & \xi > 1/4, \\
\phi(0) & |\xi| \le 1/4, \\
\phi(\xi+1/4) & \xi < -1/4,
\end{cases}
\\
\label{fj}
\widehat f_j(\xi) &= \ov{\phi(\xi- j/2-1/4)}^{(j)} + (-1)^j \ov{\phi(\xi+ j/2+1/4)}^{(j)} \qquad\text{for }j \ge 1.
\end{align}
Here, the Fourier transform of $f$ is given by $\hat f(\xi) = \int_\R f(x) e^{-2\pi i x \xi} dx$ and we use the following convention for taking complex conjugates 
\[
\ov{z}^{(j)}=\begin{cases} \ov{z} & j \text{ is odd}\\
z & j \text{ is even.}
\end{cases}
\]
The symmetric frequency supported (SFS) shift-invariant  system generated by $\mathcal W(\phi)$ is given by
\[
E^\Z(\mathcal W(\phi)):= \{T_k f_j: k\in \Z, \ j=0,1,\ldots \}.
\]
\end{definition}

\begin{figure}\label{figure1}
\centerline{\includegraphics[width=5in]{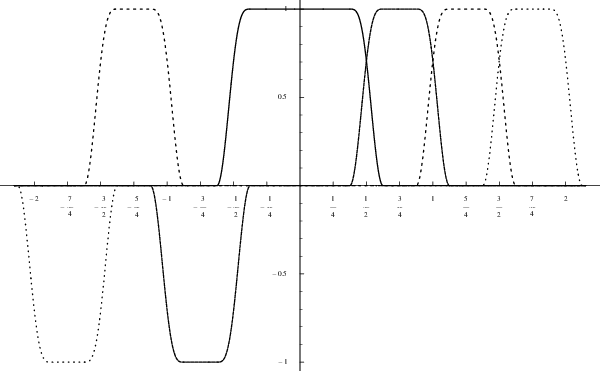}}
\caption{Graphs of functions $\widehat f_j$, $j=0,1,2,3$.}
\end{figure}

\begin{remark}
The condition (iii) in Definition \ref{wilson} is imposed to ensure that $\widehat f_0$ is a $C^\infty$ function. Indeed, $\widehat f_0$ and all its derivatives are continuous at $\xi=\pm 1/4$. Elsewhere, $\widehat f_0$ is automatically smooth. 
\end{remark}

\begin{remark}
Assume the function $\phi$ in Definition \ref{wilson} is real valued.  In this case, the condition (iii) is an immediate consequence of (i) and (ii).  Indeed, (i) and (ii) imply that for all $\xi \in (-1/2,1/2)$,
\[
\phi(\xi-1/2)^2+ \phi(\xi)^2+ \phi(\xi+1/2)^2=1.
\]
Thus, $\phi(0)=\pm 1$.
Since  $\phi^2$ and all of its derivatives vanish at $\xi=\pm 1/2$, all derivatives of $\phi^2$ must also vanish at $\xi=0$, thus showing (iii). Consequently, $\widehat f_0$ is automatically a $C^\infty$ function.
\end{remark}

If $\phi$ is real valued, then so is $\widehat f_0$.  It is not difficult to show that the Gabor system 
\begin{equation}\label{gab}
\{M_m T_{k/2} f_0\}_{ m,k\in \Z},
\end{equation}
generated by a window function $f_0$ with time-frequency lattice $(\Z/2) \times \Z$, is a tight frame frame for $L^2(\R)$. Clearly, the Gabor system \eqref{gab} is not an orthonormal basis since the support of $\widehat f_0$ is contained in the interval $[-1/2-\delta,1/2+\delta]$ of length $<3/2$. However, by the theorem of Daubechies, Jaffard, and Journ\'e \cite{DJJ},  the Wilson system with window $f_0$, which is given by
\begin{align*}
 \{T_{k}f_0\}_{k\in\Z} & \cup \{\tfrac{1}{\sqrt{2}} T_{k} (M_{m}+(-1)^m M_{-m})f_0\}_{k\in\Z,m\in \N} \\
& \cup \{\tfrac{1}{\sqrt{2}} T_{k}T_{1/2} (M_{m}-(-1)^m M_{-m})f_0\}_{k\in\Z,m\in \N},
\end{align*}
is an orthonormal basis of $L^2(\R)$.

In general, two generators of the Wilson system with window $g$
\[
 \tfrac{1}{\sqrt{2}} (M_{m}+(-1)^m M_{-m})g  \quad\text{and}\quad
\tfrac{1}{\sqrt{2}} T_{1/2} (M_{m}-(-1)^m M_{-m})g, \qquad m\in \N
\]
have frequency supports with large overlaps.
This is due to the fact that the above generators have contributions from both $M_{m}g$ and $M_{-m}g$. In contrast, our construction of the symmetric frequency supported (SFS) system  does not suffer from such large overlaps. Indeed, SI generators $f_j$ can have arbitrarily small frequency overlaps as $\delta \to 0$ approximating the partition \eqref{sypa}. At the same time, the SFS system retains the most important feature of the Wilson system by forming an orthonormal basis of $L^2(\R)$.

\begin{theorem}\label{wil}
The shift invariant system generated by $\mathcal W(\phi)=\{f_j: j=0,1, \ldots \}$
\[
E^\Z(\mathcal W(\phi))= \{T_k f_j: k\in \Z, \ j=0,1,\ldots \}
\]
is an orthonormal basis of $L^2(\R)$.
\end{theorem}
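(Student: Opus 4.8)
The plan is to use the standard fiberization of shift-invariant systems. Via the unitary $\el \to L^2(\T,\ell^2(\Z))$, $f \mapsto \tau_f(\xi):=(\hat f(\xi+k))_{k\in\Z}$, which intertwines the translation $T_1$ with multiplication by $e^{-2\pi i \xi}$, the system $E^\Z(\mathcal W(\phi))$ is an orthonormal basis of $\el$ if and only if for a.e.\ $\xi$ the fibers $\tau_j(\xi):=(\hat f_j(\xi+k))_{k\in\Z}$ form an orthonormal basis of $\ell^2(\Z)$. I would split this into two checks: (A) orthonormality, i.e.\ the bracket products $[\hat f_i,\hat f_j](\xi):=\sum_{k\in\Z}\hat f_i(\xi+k)\ov{\hat f_j(\xi+k)}$ equal $\delta_{ij}$ for a.e.\ $\xi$; and (B) completeness, which --- given (A) --- is equivalent to the single identity $\sum_{j\ge 0}|\hat f_j(\xi)|^2=1$ for a.e.\ $\xi$, since testing the orthonormal fibers against each standard basis vector $e_m$ gives $\sum_j|\hat f_j(\xi+m)|^2=1$.

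First I would record the frequency supports. Condition (ii) forces $|\phi|\equiv 1$ on $[-1/4+\delta,1/4-\delta]$ with a smooth taper on $[\pm(1/4-\delta),\pm(1/4+\delta)]$, so that $\hat f_0$ is a plateau of modulus one on $[-1/2+\delta,1/2-\delta]$ tapering to $0$ on $[\pm(1/2-\delta),\pm(1/2+\delta)]$, while for $j\ge 1$ the two summands of $\hat f_j$ have disjoint supports $[j/2-\delta,j/2+1/2+\delta]$ and its reflection. Two consequences follow: the supports of $\hat f_i$ and $\hat f_j$ are disjoint up to measure zero once $|i-j|\ge 2$, so only the pairs with $|i-j|\le 1$ can contribute to (A); and for the diagonal $i=j\ge 1$ the cross term between the two summands vanishes, whence $[\hat f_j,\hat f_j](\xi)=\sum_k|\phi(\xi+k-j/2-1/4)|^2+\sum_k|\phi(\xi+k+j/2+1/4)|^2$. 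Here the arguments of the two integer-shift sums together run through the single half-integer lattice $\xi-1/4+\tfrac12\Z$, so condition (ii) gives $[\hat f_j,\hat f_j]\equiv 1$; the case $j=0$ is handled directly from the plateau shape.

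The crux is the cancellation of the off-diagonal brackets $[\hat f_i,\hat f_{i+1}]$ (and $[\hat f_0,\hat f_1]$), and this is where the conjugation convention $\ov{\cdot}^{(j)}$ and the signs $(-1)^j$ earn their keep; I expect this sign-and-conjugation bookkeeping to be the only delicate point. The set $\supp\hat f_i\cap\supp\hat f_{i+1}$ consists of two taper intervals $R$ (near $(i+1)/2$) and $L$ (near $-(i+1)/2$), each of width $2\delta$, with $R=L+(i+1)$. Thus for fixed $\xi$ the bracket receives exactly two contributions, from some $k$ with $\xi+k\in R$ and from $k-(i+1)$ with $\xi+k-(i+1)\in L$. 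On $R$ only the ``right'' summands survive and on $L$ only the ``left'' ones; writing $a=\phi(\eta-i/2-1/4)$ and $b=\phi(\eta-i/2-3/4)$ at $\eta=\xi+k$, a short computation using $\ov{\ov z^{(i+1)}}=\ov z^{(i)}$ shows the $R$-contribution equals $\ov{ab}^{(i)}$, while the $L$-contribution carries the extra factor $(-1)^i(-1)^{i+1}=-1$ and equals $-\ov{cd}^{(i)}$ with $c=\phi(\eta'+i/2+1/4)$ and $d=\phi(\eta'+i/2+3/4)$ evaluated at $\eta'=\eta-(i+1)$. The shift $\eta'=\eta-(i+1)$ turns $c$ into $b$ and $d$ into $a$, so the two contributions are $\ov{ab}^{(i)}$ and $-\ov{ab}^{(i)}$ and cancel exactly; the pair $(f_0,f_1)$ is identical modulo the plateau formula for $\hat f_0$.

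Finally, for (B) I would fix $\xi>0$; then only the plateau of $\hat f_0$ and the right summands of the $\hat f_j$ ($j\ge 1$) are nonzero, and the needed identity $\sum_{j\ge 0}|\hat f_j(\xi)|^2=1$ is exactly condition (ii) evaluated at $\eta=\xi-1/4$, the term $|\hat f_0(\xi)|^2$ accounting for the indices $k\ge 0$ (using $|\phi(0)|=1$ to absorb the flat part of $\hat f_0$ on $(0,1/4]$) and the right summands of $\hat f_j$ for $k\le -1$; the case $\xi<0$ is symmetric. Combining (A) and (B) shows the fibers are orthonormal bases of $\ell^2(\Z)$ for a.e.\ $\xi$, which is the assertion.
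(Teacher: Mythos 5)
Your proposal is correct, and every computational step checks out (the support analysis, the identity $\ov{\ov z^{(i+1)}}=\ov z^{(i)}$, the sign bookkeeping in the $R$/$L$ cancellation, and the tiling of the half-integer lattice in the diagonal bracket). It uses the same fiberization framework as the paper but verifies the \emph{dual} condition on the fibers. The paper shows the dual Gramian is the identity, i.e.\ $\sum_{j\ge 0}\widehat f_j(\xi)\,\ov{\widehat f_j(\xi+k)}=\delta_{k,0}$ --- a sum over the generator index $j$ for each fixed integer shift $k$ --- which gives the Parseval frame property, and then upgrades to an orthonormal basis using $\|f_j\|=1$. You instead show the Gramian is the identity, $[\widehat f_i,\widehat f_j]=\sum_{k}\widehat f_i(\xi+k)\ov{\widehat f_j(\xi+k)}=\delta_{ij}$ --- a sum over shifts for each fixed pair of generators --- which gives orthonormality of the system, and then add the completeness check $\sum_{j}|\widehat f_j(\xi)|^2=1$, which is precisely the $k=0$ case of the paper's identity. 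The cancellation mechanism is the same in spirit (the factors $(-1)^j$ and the alternating conjugation leave exactly two surviving terms that cancel), but the surviving products are genuinely different: the paper pairs the \emph{same} generator at the two points $\xi$ and $\xi\pm j$ or $\xi\pm(j+1)$, whereas you pair the two \emph{adjacent} generators $f_i,f_{i+1}$ at the same point in the two taper intervals $R$ and $L$. Your direct proof that $[\widehat f_j,\widehat f_j]\equiv 1$, via the observation that the two integer-shift sums jointly run once through $\xi-1/4+\tfrac12\Z$, is a nice self-contained argument that the paper does not need (it gets orthonormality for free from Parseval plus unit norms). The two routes are of comparable length; the paper's has the minor economy that a single family of identities does all the work, while yours has the conceptual advantage of exhibiting orthogonality of the generators explicitly.
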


\begin{proof}
Observe that
\[
||\phi||^2 = \int_\R |\phi(\xi)|^2 d\xi = \sum_{k\in\Z} \int_0^{1/2} |\phi(\xi+k/2)|^2 = 1/2.\]
Therefore, a simple calculation shows that
\[
||f_0||^2 = ||\phi||^2 + 1/2 =1.
\]
Likewise, since the supports of two components defining $\widehat f_j$ are disjoint we have
\[
||f_j||^2 = 2 ||\phi||^2 = 1 \qquad\text{for }j \ge 1.
\]
Hence, to establish Theorem \ref{wil}, by \cite[Proposition 3.2.1]{Dau} it suffices to prove that $E(\mathcal W(\phi))$ is a Parseval frame, i.e.,
\bq\label{w1}
\sum_{j=0}^\infty \sum_{k\in\Z} | \lan h, T_k f_j \ran|^2 = ||h||^2 \qquad\text{for all }h\in L^2(\R).
\eq

Let $\mathcal T: L^2(\R) \to L^2(\T,\ell^2(\Z))$ be a fiberization operator given by $\mathcal T f(\xi)= ( \widehat f(\xi+k))_{k\in \Z}$, where $\T=\R/\Z$ is identified with $[-1/2,1/2)$. By \cite[Theorem 2.3]{B0}, a frame or Riesz basis property of a SI system is characterized by the corresponding property in terms of fibers. In particular,
$E(\mathcal W(\phi))$ is Parseval frame of $L^2(\R)$ if and only if a collection $(\mathcal T f_j(\xi) )_{j\in \N_0}$ is a  Parseval frame of $\ell^2(\Z)$ for a.e. $\xi\in \T$. In the terminology of Ron and Shen \cite{RS}, this can be equivalently stated that the dual Gramian of $E(\mathcal W(\phi))$ is an identity on $\ell^2(\Z)$ for a.e. $\xi$. That is, we need to show that
\bq\label{w3}
 \sum_{j=0}^\infty \widehat f_j(\xi) \ov{\widehat f_j(\xi+k)} = \delta_{k,0} \qquad\text{for a.e. }\xi\in\R \text{ and } k\in\Z.
\eq 
Alternatively, \eqref{w1} follows from \eqref{w3} by a direct calculation using the Plancherel theorem and a periodization argument
\[\label{w2}
\begin{aligned}
\sum_{j=0}^\infty \sum_{k\in\Z} | \lan h, T_k f_j \ran|^2  & = 
\sum_{j=0}^\infty \sum_{k\in\Z} | \lan \widehat h, M_k \widehat f_j \ran|^2 
\\
&=
\sum_{j=0}^\infty \sum_{k\in\Z} \bigg|  \int_{[0,1]} e^{-2\pi i \xi k} \bigg(\sum_{m\in \Z} \widehat h(\xi+m)\ov{\widehat f_j(\xi+m)}\bigg) d\xi\bigg|^2 
\\
& = \sum_{j=0}^\infty  \int_{[0,1]} \bigg| \sum_{m\in \Z} \widehat h(\xi+m)\ov{\widehat f_j(\xi+m)}\bigg|^2 d\xi
\\
& =   \int_{[0,1]} \sum_{j=0}^\infty \sum_{m\in \Z} \sum_{k\in \Z}  \widehat h(\xi+m) \ov{\widehat f_j(\xi+m)}  \widehat f_j(\xi+m+k)  \ov{\widehat h(\xi+m+k)}  d\xi
\\
&= \sum_{k\in\Z}
\int_{\R} \widehat h(\xi) \ov{\widehat h(\xi+k)} \bigg( \sum_{j=0}^\infty \ov{\widehat f_j(\xi)}  \widehat f_j(\xi+k) \bigg)  d\xi.
\end{aligned}
\]
To justify the above manipulations we need to assume that $h\in L^2(\R)$ is such that $\widehat h$ has bounded support.
Combining the above with \eqref{w3} yields \eqref{w1} for functions $h$ belonging to a dense subset of $L^2(\R)$. By the density, \eqref{w1} holds for all $h\in L^2(\R)$.

To establish \eqref{w3}, note that by support considerations we have 
\bq\label{sp}
\supp \widehat f_j \subset [-j/2-3/4,-j/2+1/4] \cup [j/2-1/4, j/2+3/4]
\qquad\text{for }j \ge 0.
\eq
Thus, $\widehat f_j(\xi) \ov{\widehat f_j(\xi+k)} \equiv 0$, unless $k=0$, $|k|=j$, or $|k|= j+1$. In the special case of $k=0$, we have
\[
\begin{aligned} 
|\widehat f_j(\xi)|^2& =
\begin{cases}
|\phi(\xi-1/4)|^2 & \xi > 1/4, \\
1 & |\xi| \le 1/4, \\
|\phi(\xi+1/4)|^2 & \xi < -1/4.
\end{cases}
\qquad\text{for }j=0,\\
|\widehat f_j(\xi)|^2& = |\phi(\xi- j/2-1/4)|^2 + |\phi(\xi+ j/2+1/4)|^2 \qquad\text{for }j\ge 1.
\end{aligned}
\]
Hence, by (ii) we have that for $\xi>1/4$
\begin{equation}\label{fj1}
\sum_{j=0}^\infty |\widehat f_j(\xi)|^2 = \sum_{j=0}^\infty |\phi(\xi- j/2-1/4)|^2 = \sum_{j\in\Z} |\phi(\xi - j/2-1/4)|^2 =
1.
\end{equation}
A similar identity holds for $\xi<-1/4$. Finally, if $|\xi| \le 1/4$, then $f_j(\xi)=0$ for all $j\ge 1$ and we have
\[
\sum_{j=0}^\infty |\widehat f_j(\xi)|^2 =1 \qquad\text{for all }\xi\in\R.
\]
In the case when $k\ne 0$, a direct calculation using \eqref{fj} and the support condition \eqref{sp} shows that for $j\ge 1$ and $k=\pm j$,
\begin{equation}\label{fj2}
\begin{aligned}
\widehat f_j(\xi) \ov{\widehat f_j(\xi +k)} 
 = &
\left(\ov{\phi(\xi- j/2-1/4)}^{(j)} + (-1)^j \ov{\phi(\xi+ j/2+1/4)}^{(j)} \right)
 \\
&\cdot \left( \ov{\phi(\xi \pm j - j/2-1/4)}^{(j+1)} + (-1)^j \ov{\phi(\xi \pm j + j/2+1/4)}^{(j+1)} \right)
\\ 
= & \begin{cases}  (-1)^j \ov{\phi(\xi+ j/2+1/4)}^{(j)}  \ov{\phi(\xi + j/2 -1/4)}^{(j+1)} & k=j \\
(-1)^j  \ov{\phi(\xi- j/2-1/4)}^{(j)} \ov{\phi(\xi - j/2+1/4)}^{(j+1)} & k=-j
 \end{cases}
\\
= &
(-1)^j \ov{\phi(\xi \pm (j/2 +1/4))}^{(j)} \ov{\phi(\xi \pm (j/2 -1/4))}^{(j+1)}.
\end{aligned}
\end{equation}
Likewise, for $j \ge 1 $ and $k=\pm (j+1)$ we have
\[
\begin{aligned}
\widehat f_j(\xi) \ov{\widehat f_j(\xi +k)}  = &
\left(\ov{\phi(\xi- j/2-1/4)}^{(j)} + (-1)^j \ov{\phi(\xi+ j/2+1/4)}^{(j)} \right)
 \\
&\cdot \left( \ov{\phi(\xi \pm (j+1) - j/2-1/4)}^{(j+1)} + (-1)^j \ov{\phi(\xi \pm (j+1) + j/2+1/4)}^{(j+1)} \right)
\\ 
= & \begin{cases}  (-1)^j \ov{\phi(\xi+ j/2+1/4)}^{(j)}  \ov{\phi(\xi + j/2+3/4)}^{(j+1)} & k=j+1 \\
(-1)^j  \ov{\phi(\xi- j/2-1/4)}^{(j)} \ov{\phi(\xi - j/2-3/4)}^{(j+1)} & k=-j-1
 \end{cases}
\\
= &
(-1)^j 
\ov{\phi(\xi \pm (j/2+1/4))}^{(j)}\ov{\phi(\xi \pm (j/2+3/4))}^{(j+1)}.
\end{aligned}
\]
A similar calculation using \eqref{fj0} yields
\[
\widehat f_0(\xi) \ov{\widehat f_0(\xi \pm 1)} = \phi(\xi \pm 1/4) \ov{ \phi(\xi \pm 3/4)}.
\]
Combining two previous formulas shows that for $j\ge 0$ and $k=\pm (j+1)$ we have
\begin{equation}\label{fj3}
\widehat f_j(\xi) \ov{\widehat f_j(\xi +k)}  = 
(-1)^j 
\ov{\phi(\xi \pm (j/2+1/4))}^{(j)}\ov{\phi(\xi \pm (j/2+3/4))}^{(j+1)}.
\end{equation}
Let $k\ge 1$. Using \eqref{fj2} and \eqref{fj3}, the infinite sum \eqref{w3} has at most two non-zero terms corresponding to $j=k-1$ and $j=k$, which add up to zero,
\begin{multline*}
(-1)^{k-1} \ov{\phi(\xi+(k-1)/2+1/4)}^{(k-1)} \ov{\phi(\xi+(k-1)/2+3/4)}^{(k)}
\\
+ (-1)^k \ov{\phi(\xi + k/2+1/4)}^{(k)} \ov{\phi(\xi + k/2-1/4)}^{(k+1)}=0.
\end{multline*}
Likewise, if $k\le -1$, then the infinite sum \eqref{w3} has at most two non-zero terms corresponding to $j=-k+1$ and $j=-k$ also adds up to zero:
\begin{multline*}
(-1)^{-k+1} \ov{\phi(\xi+(k-1)/2+1/4)}^{(-k+1)} \ov{\phi(\xi+(k-1)/2+3/4)}^{(-k)}
\\
+ (-1)^{-k} \ov{\phi(\xi + k/2+1/4)}^{(-k)} \ov{\phi(\xi + k/2-1/4)}^{(-k+1)}=0.
\end{multline*}
Consequently, by \eqref{fj1} the identity \eqref{w3} holds and Theorem \ref{wil} is proven.
\end{proof}

We can easily extend the SFS basis to higher dimensions by tensoring argument.

\begin{definition}
Define the set of {\it SFS basis} generators in $\R^n$ as
\[
\mathcal W_n(\phi) = \{ w_{\mathbf j}=f_{j_1} \otimes \ldots \otimes f_{j_n}: \mathbf j =(j_1,\ldots,j_n)\in \N_0^n \}.
\]
Here, 
\[
w_{\mathbf j} (x) = (f_{j_1} \otimes \ldots \otimes f_{j_n})(x_1,\ldots x_n) = \prod_{i=1}^n f_{j_i}(x_i) \qquad\text{for } \mathbf j\in\N_0^n, \ x=(x_1,\ldots,x_n)\in\Rn.
\]
\end{definition}

Note that each generator $w_{\mathbf j}$ is in the Schwartz class since $\widehat w_{\mathbf j}$ is a compactly supported $C^\infty$ function. Moreover, by Definition \ref{wilson} we have
\bq\label{supp}
\begin{aligned}
\supp \widehat w_{(j_1,\ldots,j_n)} & \subset I_{j_1} \times \ldots \times I_{j_n}, \quad\text{where}
\\
I_j&  = [-(j+1)/2-\delta,-j/2+\delta] \cup [j/2-\delta,(j+1)/2+\delta].
\end{aligned}
\eq
Thus, a typical SFS basis generator $\widehat w_{\mathbf j}$ is supported within the distance $\delta$ (in $\ell^\infty$ norm) of the union of $2^n$ cubes of the form $\pm [j_1/2,(j_1+1)/2] \times \ldots \times \pm [j_n/2,(j_n+1)/2]$. The only exception is when $j_k =0$ for some $k=1,\ldots,n$. In this case two intervals $\pm [j_k/2,(j_k+1)/2]$ are replaced by a single interval $[-1/2,1/2]$ and consequently $\widehat w_{\mathbf j}$ is supported on a fewer number of cubes. 

As an immediate consequence of Theorem \ref{wil}, we obtain its higher dimensional counterpart. 

\begin{theorem}\label{wiln}
The SI system 
\[
E^\Zn(\mathcal W_n(\phi)):= \{T_{\mathbf k} w_{\mathbf j}: \mathbf k\in \Zn, \  \mathbf j \in \N_0^n\},
\]
generated by $\mathcal W_n(\phi)$ is an orthonormal basis of $L^2(\Rn)$.
\end{theorem}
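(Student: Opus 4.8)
The plan is to deduce the $n$-dimensional statement from the one-dimensional Theorem \ref{wil} by the standard tensor product principle for orthonormal bases: I will show that $E^\Zn(\mathcal W_n(\phi))$ is nothing but the collection of all $n$-fold tensor products of elements of the one-dimensional orthonormal basis $E^\Z(\mathcal W(\phi))$, and then invoke the fact that tensoring orthonormal bases yields an orthonormal basis of the tensor product Hilbert space.

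First I would record that translation factors through the tensor product. Since $w_{\mathbf j} = f_{j_1} \otimes \cdots \otimes f_{j_n}$, for any $\mathbf k = (k_1,\ldots,k_n)\in\Zn$ one has $w_{\mathbf j}(x-\mathbf k) = \prod_{i=1}^n f_{j_i}(x_i - k_i)$, so that
\[
T_{\mathbf k} w_{\mathbf j} = (T_{k_1} f_{j_1}) \otimes \cdots \otimes (T_{k_n} f_{j_n}).
\]
Consequently the assignment $(\mathbf k, \mathbf j) \mapsto T_{\mathbf k} w_{\mathbf j}$ realizes $E^\Zn(\mathcal W_n(\phi))$ as exactly the set of tensor products $g_1 \otimes \cdots \otimes g_n$ in which each factor $g_i$ ranges independently over $\{T_{k_i} f_{j_i} : k_i \in \Z,\ j_i \in \N_0\} = E^\Z(\mathcal W(\phi))$. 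This is simply the bookkeeping identity $\Zn \times \N_0^n \cong (\Z \times \N_0)^n$.

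Next I would invoke the general Hilbert space fact that if $\{u_\alpha\}_{\alpha \in I}$ is an orthonormal basis of a separable Hilbert space $H$, then the $n$-fold tensor products $\{u_{\alpha_1} \otimes \cdots \otimes u_{\alpha_n}\}_{(\alpha_1,\ldots,\alpha_n)\in I^n}$ form an orthonormal basis of the tensor product $H^{\otimes n}$. Applying this with $H = L^2(\R)$ and the orthonormal basis $E^\Z(\mathcal W(\phi))$ supplied by Theorem \ref{wil}, and using the canonical unitary identification $L^2(\R)^{\otimes n} \cong L^2(\Rn)$ that sends $g_1 \otimes \cdots \otimes g_n$ to the function $x \mapsto \prod_{i=1}^n g_i(x_i)$, the $n$-fold tensor products of $E^\Z(\mathcal W(\phi))$ form an orthonormal basis of $L^2(\Rn)$. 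By the previous paragraph this collection is precisely $E^\Zn(\mathcal W_n(\phi))$, which establishes the theorem.

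There is no genuine obstacle in this argument: the entire content lies in the one-dimensional Theorem \ref{wil}, and the passage to dimension $n$ is a routine tensoring step. The only points deserving a moment's care are the index-set bijection between $\Zn \times \N_0^n$ and $(\Z \times \N_0)^n$, and the verification that the natural map $L^2(\R)^{\otimes n} \to L^2(\Rn)$ is unitary with dense range, both of which are standard.
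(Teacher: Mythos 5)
Your proposal is correct and follows essentially the same route as the paper: both identify $E^{\Z^n}(\mathcal W_n(\phi))$ with the set of $n$-fold tensor products of elements of the one-dimensional basis $E^{\Z}(\mathcal W(\phi))$ via the identity $T_{k_1}f_{j_1}\otimes\cdots\otimes T_{k_n}f_{j_n}=T_{(k_1,\ldots,k_n)}w_{(j_1,\ldots,j_n)}$, and then invoke the standard fact that tensoring orthonormal bases yields an orthonormal basis of $L^2(\R)^{\otimes n}\cong L^2(\R^n)$. Your write-up merely makes the index bookkeeping and the unitary identification more explicit than the paper does.
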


\begin{proof}
The $n$-fold tensoring of an orthonormal basis of $L^2(\R)$ produces an orthonormal basis in $L^2(\R)^{\otimes n}$, which is identified with $L^2(\Rn)$. 
Since every element of $E^\Z(\mathcal W(\phi))$ is of the form $T_k f_j$, $k\in \Z$, $j\ge 0$, we have
\[
T_{k_1} f_{j_1} \otimes \ldots \otimes T_{k_n} f_{j_n} = T_{(k_1,\ldots,k_n)} w_{(j_1,\ldots,j_n)}.
\]
Therefore, $n$-fold tensoring of orthonormal basis $E^\Z(\mathcal W(\phi))$ of $L^2(\R)$ produces an orthonormal basis $E^\Zn(\mathcal W_n(\phi))$ of $L^2(\R^n)$.
\end{proof}

\section{Schwartz class MRA associated with real dilations}\label{S3}

In this section we prove the existence of smooth MRAs of (possibly) high multiplicity for any expansive real dilations.

\begin{definition}\label{DMRA}
Let $A$ be an $n\times n$ expansive matrix, i.e., all eigenvalues $\lambda$ of $A$ satisfy $|\lambda|>1$. Let $\Gamma$ be a full rank lattice in $\Rn$. We say that a sequence $\{ V_j \}_{j\in \Z}$ of closed subspaces of $L^2(\Rn)$, is a multiresolution analysis (MRA) associated with $(A,\Gamma)$ of multiplicity $N\in \N$, if
\begin{enumerate}[(M1)]
\item $V_j\subset V_{j+1}$  for all $j\in\Z$,
\item $\bigcap_{j\in\Z{}}V_j=\{0\}$,
\item $\overline{\bigcup_{j\in\Z{}}V_j}=L^2(\Rn)$,
\item $f\in V_0 \iff f(A^j \cdot) \in V_j$  for all $j\in\Z$,
\item $\exists \ \Phi=\{\vp^1, \ldots, \vp^N \} \subset V_0$ such that 
$
E^\Gamma(\Phi):=\{T_\gamma \vp: \gamma \in \Gamma, \ \vp \in \Phi\}
$
is an orthonormal basis of $V_0$. 
\end{enumerate}
In short, we say that $\{ V_j \}_{j\in \Z}$ is $(A,\Gamma)$-MRA, and $\Phi$ is a {\it scaling vector}.
\end{definition}

\begin{theorem}\label{MRA} Suppose $A$ is a real $n\times n$ expansive dilation and $\Gamma$ is a full rank lattice. Then, for some $N\in \N$, there exists an $(A,\Gamma)$-MRA $\{V_j\}_{j\in\Z}$ of multiplicity $N$ such that its scaling functions $\{\vp^1, \ldots, \vp^N\} \subset V_0$ are in the Schwartz class. Moreover, each $\widehat \vp_i$ is a $C^\infty$ real-valued function with compact support.
\end{theorem}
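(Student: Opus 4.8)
The plan is to realize the core space $V_0$ of the MRA as the closed linear span of a carefully chosen \emph{subfamily} of the SFS orthonormal basis of Theorem \ref{wiln}, with the selected frequency supports approximating a dilation-adapted ellipsoid. The payoff of this choice is that the multiresolution axioms will reduce to band-limiting (support) inclusions rather than to any arithmetic compatibility between $A$ and the lattice, which is what allows the result to hold for \emph{real}, not merely rational, $A$. I work in the frequency domain, where dilation by $A$ corresponds to dilation by $(A^*)^{-1}$, so the frequency footprint of $V_0$ must nest under $A^*$.

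\emph{Aligning a reflection-symmetric expansive ellipsoid.} Since $A$ is expansive so is $A^*$, which therefore admits an adapted ellipsoidal norm: there is $B_0\in GL_n(\R)$ with $\sigma_{\min}(B_0^{-1}A^*B_0)>1$, i.e.\ $B_0^{-1}A^*B_0$ strictly expands the Euclidean unit ball. Strict expansion of the ball is an open condition on the conjugator and is unchanged when the conjugator is multiplied by a scalar. Hence, using the rational alignment furnished by Lemma \ref{cayley}, I replace $B_0$ by a rational matrix and clear denominators to obtain an \emph{integer} matrix $P$ for which $\widetilde A:=PAP^{-1}$ is real expansive and $\widetilde A^*$ still strictly expands the unit ball. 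The unit ball is invariant under all coordinate reflections — exactly the symmetry that the SFS generators will impose on any region assembled from them, since their frequency supports sit on a fixed sign-symmetric half-integer grid.

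\emph{The cover and the nesting sandwich.} Working now with $\widetilde A$, fix a large radius $R$, let $S\subset\N_0^n$ index the generators $w_{\mathbf j}\in\mathcal W_n(\phi)$ whose frequency support lies in $\{|\xi|\le R\}$, put $\Omega=\bigcup_{\mathbf j\in S}\supp\widehat w_{\mathbf j}$, and set $V_0=\overline{\spa}\{T_{\mathbf k}w_{\mathbf j}:\mathbf k\in\Zn,\ \mathbf j\in S\}$ with $V_j$ its $\widetilde A^{\,j}$-dilates. Because $S$ indexes a subfamily of the orthonormal basis $E^{\Zn}(\mathcal W_n(\phi))$, axiom (M5) of Definition \ref{DMRA} holds with $\Phi=\{w_{\mathbf j}:\mathbf j\in S\}$ and $\Gamma=\Zn$, and $V_0\subset PW_\Omega:=\{f:\supp\widehat f\subset\Omega\}$. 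Letting $U$ be the set of points of $\Omega$ at distance $>\delta$ from every unselected cube, each $\widehat w_{\mathbf j}$ with $\mathbf j\notin S$ vanishes on $U$ (its support lies within $\delta$ of the $\mathbf j$-cube); hence any $f$ with $\supp\widehat f\subset U$ is orthogonal to all such $T_{\mathbf k}w_{\mathbf j}$, so by completeness of the SFS basis $f$ equals its expansion over $\{T_{\mathbf k}w_{\mathbf j}:\mathbf j\in S\}$ and lies in $V_0$. This gives the sandwich $PW_U\subset V_0\subset PW_\Omega$; dilating yields $PW_{\widetilde A^* U}\subset V_1$, so the nesting (M1) follows once $(\widetilde A^*)^{-1}\Omega\subset U$. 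This last inclusion is the quantitative heart: the grid (cube side $1/2$, overlap $\delta$) makes $\Omega$ and $U$ differ from $\{|\xi|\le R\}$ only by an $O(1)$ layer, while strict expansion opens a $\Theta(R)$ gap between $(\widetilde A^*)^{-1}\{|\xi|\le R\}$ and the boundary, so the inclusion holds for $R$ large. Axioms (M2), (M3) follow from $(\widetilde A^*)^{j}\Omega\downarrow\{0\}$ and $(\widetilde A^*)^{j}U\uparrow\R^n$ (with $\mathbf 0\in S$, $U$ contains a neighborhood of the origin), and (M4) holds by construction, producing a Schwartz-class $(\widetilde A,\Zn)$-MRA with $C^\infty$ compactly supported scaling transforms.

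\emph{Descent to $(A,\Gamma)$, and the main obstacle.} Pulling this MRA back by $P$ yields an $(A,\Gamma_1)$-MRA with $\Gamma_1=P^{-1}\Zn\supset\Zn$ a rational lattice of finite index $|\det P|$; adjoining the finitely many $\Gamma_1/\Zn$-coset translates of the scaling functions converts it into an $(A,\Zn)$-MRA of higher multiplicity, still Schwartz with $C^\infty$ compactly supported transforms. For a general full-rank $\Gamma=M\Zn$, I run the construction for the conjugate dilation $M^{-1}AM$ (again real expansive) and transport the resulting $(M^{-1}AM,\Zn)$-MRA by $x\mapsto Mx$, rescaling generators by $|\det M|^{-1/2}$ to restore orthonormality; all regularity is preserved. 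The step I expect to be the main obstacle is reconciling the \emph{rigidity} of the SFS grid — fixed scale and forced coordinate-reflection symmetry — with a general expansive $A$, whose adapted ellipsoid is tilted and incompatible with that symmetry; overcoming this is precisely what forces the rational alignment of Lemma \ref{cayley} together with the inflation $R\to\infty$ needed to beat the fixed grid resolution against the expansion margin. Once the ellipsoid is aligned and enlarged, the remaining axioms reduce to the band-limiting sandwich above and to routine finite-index lattice bookkeeping.
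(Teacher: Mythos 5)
Your proposal is correct and follows essentially the same route as the paper: select a finite subfamily of the SFS basis whose reflection-symmetric frequency supports approximate a rationally aligned expansive ellipsoid, sandwich $V_0$ between two Paley--Wiener spaces that nest under the dual dilation, and descend to the desired lattice by finite-index coset translates and a change of variables (Lemmas \ref{resc} and \ref{req}). The only difference is organizational --- you conjugate $A$ by an integer matrix so that the adapted ellipsoid becomes a large round ball on the standard grid, whereas the paper keeps $A$ fixed, rotates the SFS basis by $U\in O(n,\Q)$ via the lattice $U^{-1}\Z^n$ and inflates the ellipsoid by a scalar --- and both versions hinge on exactly the two points you identify: the coordinate-reflection symmetry of the SFS supports must match that of the target region, and the $O(1)$ grid error must be beaten by the $\Theta(R)$ expansion gap.
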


Theorem \ref{MRA} appears to be new even in the one dimensional setting, where one can show the existence of an MRA with multiplicity $N=1$. This might look quite suspicious in light of the result by Auscher \cite[Theorem 1]{Au1}, which precludes existence of ``nice'' MRAs with an irrational dilation factor. However, \cite[Theorem 1]{Au1} is incorrectly stated in contrast to Ph.D. thesis of Auscher \cite[Proposition 1.3]{Au0} that contains a correct version of this result.
In the proof of Theorem \ref{MRA} we shall employ a series of lemmas. We start with Lemma \ref{cayley} that is very likely a folklore fact following immediately from the Cayley transform.

\begin{lemma}\label{cayley} The rational orthogonal group $O(n,\Q)$ is a dense subset of real orthogonal group $O(n,\R)$.
\end{lemma}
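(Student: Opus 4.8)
The plan is to prove that $O(n,\Q)$ is dense in $O(n,\R)$ via the Cayley transform, which provides a rational birational parametrization of the orthogonal group by skew-symmetric matrices. Recall that for a skew-symmetric matrix $S$ (so $S^T=-S$), the matrix $I+S$ is invertible (its eigenvalues are purely imaginary, hence $-1$ is not among them), and the Cayley transform $C(S)=(I-S)(I+S)^{-1}$ is orthogonal. Conversely, any orthogonal matrix $Q$ with $-1$ not an eigenvalue lies in the image and satisfies $S=(I-Q)(I+Q)^{-1}$, which is skew-symmetric. The key point is that $C$ is given by rational functions of the entries of $S$, so it maps rational skew-symmetric matrices to rational orthogonal matrices.

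First I would set up the Cayley map $C$ and verify the two algebraic facts above: that $C(S)\in O(n,\R)$ for every skew-symmetric $S$, and that $C$ maps the rational skew-symmetric matrices into $O(n,\Q)$. The orthogonality check is the short computation $C(S)C(S)^T = (I-S)(I+S)^{-1}(I+S)^{-T}(I-S)^T = (I-S)(I+S)^{-1}(I-S)^{-1}(I+S)$, using that $(I+S)^T = I-S$ and $(I-S)^T=I+S$, together with the fact that $I-S$ and $I+S$ commute (both are polynomials in $S$); this product collapses to $I$. Rationality is immediate since matrix inversion and multiplication are rational operations with rational output on rational input.

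Next I would address density. The set of rational skew-symmetric matrices is dense in the space of all skew-symmetric matrices (they form a vector space, and rationals are dense), and $C$ is continuous on its domain. Hence $C$ maps a dense subset of skew-symmetric matrices to a dense subset of the image $C(\{S : S^T=-S\})$, and this image consists of rational orthogonal matrices. The remaining issue is that $C$ does not surject onto all of $O(n,\R)$: its image misses orthogonal matrices having $-1$ as an eigenvalue, and in particular misses the non-identity component when $n$ is such that $\det C(S)$ is pinned down. In fact $\det C(S) = \det(I-S)/\det(I+S) = 1$ always (since $\det(I-S)=\det((I+S)^T)=\det(I+S)$), so the image lies in $SO(n,\R)$, and moreover avoids the locus where $-1$ is an eigenvalue.

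The main obstacle, therefore, is promoting density within the Cayley image to density in all of $O(n,\R)$. I would handle this in two steps. For $SO(n,\R)$: the Cayley image is exactly $\{Q\in SO(n,\R) : \det(I+Q)\neq 0\}$, whose complement is the proper real-algebraic subvariety $\{\det(I+Q)=0\}$; since $SO(n,\R)$ is a connected manifold and this subvariety is nowhere dense, the Cayley image is dense in $SO(n,\R)$, and combined with the previous paragraph the rational points are dense in $SO(n,\R)$. For the component with determinant $-1$: I would fix one convenient rational reflection $R\in O(n,\Q)$ with $\det R = -1$ (e.g.\ $R=\mathrm{diag}(-1,1,\ldots,1)$) and observe that right multiplication by $R$ is a homeomorphism carrying $SO(n,\R)$ onto the other component and carrying rational points to rational points. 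Hence rational orthogonal matrices are dense in both components, establishing that $O(n,\Q)$ is dense in $O(n,\R)$. The only genuinely substantive point requiring care is the nowhere-density of $\{\det(I+Q)=0\}$ in each component, which follows because a real-analytic (indeed polynomial) function on a connected manifold either vanishes identically or on a nowhere-dense set, and it does not vanish identically since $\det(I+I)=2^n\neq 0$.
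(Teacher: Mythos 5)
Your proof is correct and follows the same route as the paper: parametrize orthogonal matrices by skew-symmetric matrices via the Cayley transform, approximate a real skew-symmetric matrix by rational ones, and use continuity of the rational map to get rational orthogonal approximants. The one place where you genuinely diverge is in handling the matrices outside the Cayley image, and here your version is actually the more careful one. The paper concludes by asserting that the orthogonal matrices without $-1$ as an eigenvalue form an open and dense subset of $O(n,\R)$; taken literally this is false, since every $U\in O(n,\R)$ with $\det U=-1$ must have $-1$ as an eigenvalue (the non-real eigenvalues pair off with product $1$, so the determinant is $(-1)^m$ with $m$ the multiplicity of $-1$), and hence the whole $\det=-1$ component is missed. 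Density only holds within $SO(n,\R)$, which is exactly what you prove via the nowhere-density of the real-algebraic locus $\{\det(I+Q)=0\}$ in the connected manifold $SO(n,\R)$. Your extra step --- composing with the rational reflection $\operatorname{diag}(-1,1,\ldots,1)$ to transport density to the other component --- is precisely what is needed to obtain the statement for all of $O(n,\R)$, and it is missing from the paper's argument. (For the paper's application in Lemma 3.4 density in $SO(n,\R)$ would in fact suffice, since $U$ only enters through $U^{\top}PU$ with $P$ diagonal, but the lemma as stated requires your additional step.)
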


\begin{proof}
We shall employ Cayley's parametrization of orthogonal matrices. Any orthogonal matrix $U$ that does not have $-1$ as an eigenvalue can be expressed as
\begin{equation}\label{cay1}
U=(\mathbf I+S)(\mathbf I-S)^{-1}
\eq
for some skew-symmetric matrix $S$, $S^{\top}=-S$, where $\mathbf I$ is the identity matrix. In fact, $S$ is given explicitly by $S=(U+\mathbf I)^{-1}(U-\mathbf I)$.

Take any $U\in O(n,\R)$ that does not have $-1$ as an eigenvalue and express it as \eqref{cay1} for some skew-symmetric matrix $S$ with real entries. $S$ can be approximated by skew-symmetric matrices with rational entries. The corresponding orthogonal matrices given by Cayley's parametrization \eqref{cay1} belong to $O(n,\Q)$ and approximate $U$. This is a consequence of the continuity of multiplication and inverse operations of the topological group $GL(n,\R)$. Finally, we observe that the set of orthogonal matrices in $O(n,\R)$ that do not have $-1$ as an eigenvalue is an open and dense subset of $O(n,\R)$.
\end{proof}

The following lemma strengthens a result of Szlenk \cite[Lemma 1.5.1]{Sz} on the existence of expanding ellipsoids, see also \cite[Lemma 2.2]{B1}.

\begin{lemma}\label{sz}  Suppose $B$ is an expansive dilation.  Then, there exists an ellipsoid $\mathcal E$ that is expanding with respect to $B$. That is, there exists some $\la>1$ such that
\bq\label{sz1}
\lambda \mathcal E \subset B( \mathcal E).
\eq
Moreover, $\mathcal E$ can be chosen to have rational coordinates of principal axes. That is,
\bq\label{sz2}
\mathcal E=U^{\top} P U ({\mathbf B}(0,1)),
\eq
where $U\in O(n,\Q)$ and $P$ is a diagonal matrix with positive rational eigenvalues.
\end{lemma}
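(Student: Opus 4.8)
The plan is to produce a real expanding ellipsoid by a Lyapunov-type construction, and then perturb its defining data to rational numbers using Lemma \ref{cayley} together with the fact that the expanding property is an open condition. I would encode ellipsoids by positive definite symmetric matrices, associating to $Q$ the ellipsoid $\mathcal E_Q=\{x\in\Rn:\langle Qx,x\rangle\le 1\}$. A direct computation gives $B(\mathcal E_Q)=\mathcal E_{(B^{-1})^{\top}QB^{-1}}$ and $\la\mathcal E_Q=\mathcal E_{\la^{-2}Q}$, so that the expanding condition \eqref{sz1} is equivalent to the matrix inequality $B^{\top}QB\ge\la^2 Q$ in the positive semidefinite ordering.

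To construct such a $Q$, pick $\rho$ with $1<\rho<\min_i|\la_i|$, where $\la_i$ are the eigenvalues of $B$, and set $Q=\sum_{k=0}^\infty\rho^{2k}(B^{-k})^{\top}B^{-k}$. The series converges geometrically since $\rho<\min_i|\la_i|$, and $Q$ is positive definite. A telescoping computation yields $B^{\top}QB=\rho^2 Q+B^{\top}B$, so that $B^{\top}QB-\rho^2 Q=B^{\top}B$ is strictly positive definite. Hence $\mathcal E_Q$ is expanding with $\la=\rho$, and, crucially, the inequality holds with a definite gap. This reproves the existence statement of \cite[Lemma 1.5.1]{Sz} with an explicit reserve of room.

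I would then rationalize. By the spectral theorem write $Q=W^{\top}\Lambda W$ with $W\in O(n,\R)$ and $\Lambda$ diagonal positive, so that $\mathcal E_Q=W^{\top}\Lambda^{-1/2}W(\mathbf B(0,1))$. Using Lemma \ref{cayley}, choose $U\in O(n,\Q)$ close to $W$, and choose a diagonal matrix $P$ with positive rational entries close to $\Lambda^{-1/2}$. Then $\mathcal E=U^{\top}PU(\mathbf B(0,1))$ has the form \eqref{sz2}, and its defining matrix $Q'=U^{\top}P^{-2}U$ is close to $Q$ in norm. Since $B^{\top}QB-\rho^2 Q=B^{\top}B$ is strictly positive definite, positive semidefiniteness is preserved under small perturbations: for $Q'$ sufficiently close to $Q$ and $\la'$ slightly below $\rho$ but still exceeding $1$, one has $B^{\top}Q'B\ge(\la')^2 Q'$, so $\mathcal E$ is expanding.

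The step I would flag as the main obstacle is coordinating the two approximations. The rational orthogonal factor $U$ is supplied by Lemma \ref{cayley}, while the rational diagonal $P$ is elementary; but both perturbations feed into $Q'$ and must be jointly small enough to remain inside the fixed neighborhood of $Q$ on which $B^{\top}Q'B-(\la')^2 Q'$ stays positive definite. Because the gap $B^{\top}B$ is a fixed positive definite matrix, such a neighborhood exists uniformly, so once the strict gap is secured this coordination is routine.
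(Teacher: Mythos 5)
Your proof is correct. The overall strategy coincides with the paper's: first produce a real expanding ellipsoid, then rationalize the orthogonal factor via Lemma \ref{cayley} and the diagonal factor by hand, using the fact that the expanding property survives small perturbations. The differences are in execution. For the existence step the paper simply invokes Szlenk's lemma \cite[Lemma 1.5.1]{Sz}, whereas you reprove it with the explicit Lyapunov sum $Q=\sum_{k\ge 0}\rho^{2k}(B^{-k})^{\top}B^{-k}$ and the telescoping identity $B^{\top}QB=\rho^{2}Q+B^{\top}B$; this is self-contained and, as you note, hands you a quantitative positive-definite gap for free. For the perturbation step the paper argues geometrically, sandwiching $\tfrac{1}{1+\ve}\mathcal E\subset\tilde{\mathcal E}\subset(1+\ve)\mathcal E$ and then absorbing the factor $(1+\ve)^{2}$ into $\tilde\la$, while you argue algebraically, observing that $Q\mapsto B^{\top}QB-(\la')^{2}Q$ is continuous and positive definiteness is open. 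Both are valid; the paper's sandwich has the minor advantage of not requiring any reformulation of \eqref{sz1} as a matrix inequality, while your version makes the "reserve of room" explicit and localizes all the analysis in a single quadratic form. The coordination issue you flag at the end is indeed routine for exactly the reason you give: the gap $B^{\top}B$ is a fixed positive definite matrix, so a single operator-norm neighborhood of $Q$ works for both approximations simultaneously.
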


\begin{proof}
The first part of the statement is Szlenk's lemma which guarantees existence of an ellipsoid $\tilde {\mathcal E}$ satisfying $\tilde \lambda \tilde {\mathcal E} \subset B(\tilde {\mathcal E})$ for some $\tilde \lambda>1$. An ellipsoid $\tilde {\mathcal E}$ can be written as $\tilde {\mathcal E}=H(\mathbf (0,1))$ for some positive-definite hermitian matrix $H$. Thus, $H = \tilde U^{\top} \tilde P \tilde U$ for some $\tilde U \in O(n,\R)$ and $\tilde P$ diagonal matrix with positive diagonal entries. By Lemma \ref{cayley}, $\tilde U$ can be approximated by $U\in O(n,\Q)$. Likewise, $\tilde P$ can be approximated by a diagonal matrix $P$ with rational entries. Hence, for every $\ve>0$ we can find an ellipsoid $\mathcal E$ as in \eqref{sz2} such that
\[
\frac{1}{1+\ve} \mathcal E \subset \tilde{\mathcal E} \subset (1+\ve) \mathcal E.
\]
Thus,
\[
\mathcal E \subset  (1+\ve)\tilde{ \mathcal E} \subset \frac{1+\ve}{\tilde \lambda} B(\tilde{\mathcal E}) \subset \frac{(1+\ve)^2}{\tilde \lambda} B(\mathcal E).
\]
Choosing sufficiently small $\ve>0$ such that $(1+\ve)^2<\tilde \lambda$, shows that \eqref{sz1} holds.
\end{proof}

The following rudimentary lemma enables us to reduce the study of MRAs with a general translation lattice $\Gamma$ to the setting of the standard lattice $\Zn$, albeit for a different dilation matrix, which is similar to the original one. The proof of Lemma \ref{resc} uses a standard change of variables argument. Given a matrix $P\in GL(\R,n)$, define the dilation operator $D_P$ acting on a function $f\in L^2(\R^n)$ by 
\[
D_P f(x) = |\det P|^{1/2} f(Px).
\]

\begin{lemma}\label{resc}
Let $\Gamma$ and $\tilde \Gamma$ be two full rank lattices, and let $P\in GL(\R,n)$ be such that $\Ga=P\tilde \Ga$.
Suppose that $\{V_j\}_{j\in \Z}$ is $(A,\Gamma)$-MRA with a scaling vector $\Phi=\{\vp^1,\ldots,\vp^N\}$. 
Define 
\bq\label{resc0}
\tilde A=P^{-1}AP, \qquad \tilde \Phi=D_P(\Phi)= \{D_P \vp^1,\ldots,D_P \vp^N\}, \qquad \tilde V_j=D_P(V_j),\ j\in \Z.
\eq
Then, $\{\tilde V_j\}_{j\in \Z}$ is $(\tilde A,\tilde \Ga)$-MRA with the scaling vector $\tilde \Phi$. Conversely, any $(\tilde A,\tilde \Ga)$-MRA $\{\tilde V_j\}_{j\in \Z}$ with the scaling vector $\tilde \Phi$ corresponds to $(A,\Gamma)$-MRA $\{V_j\}_{j\in \Z}$ with a scaling vector $\Phi=\{\vp^1,\ldots,\vp^N\}$ via the relation \eqref{resc0}.
\end{lemma}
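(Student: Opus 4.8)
The plan is to exploit the single observation that the dilation operator $D_P$ is a \emph{unitary} operator on $\eln$, which follows at once from the change of variables $y=Px$. Consequently $D_P$ is a linear homeomorphism that preserves inclusions, intersections, and closures, and carries orthonormal bases to orthonormal bases. It therefore suffices to record how $D_P$ intertwines the two dilation structures and the two translation lattices, and then to push each of the axioms (M1)--(M5) through $D_P$ mechanically. Throughout I will use the elementary identity $D_M D_N = D_{NM}$, valid for all $M,N\in GL(n,\R)$, which in particular gives $D_P^{-1}=D_{P^{-1}}$.

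First I would record the two conjugation identities. Since the normalized operator $D_{A^j}f=|\det A|^{j/2}f(A^j\cdot)$ differs from $f(A^j\cdot)$ by a nonzero scalar and each $V_j$ is a subspace, axiom (M4) is equivalent to the statement $f\in V_0 \iff D_{A^j}f\in V_j$. Using $D_MD_N=D_{NM}$ one computes
\[
D_P\, D_{A^j}\, D_P^{-1} = D_{P^{-1}A^jP} = D_{\tilde A^j}, \qquad j\in\Z,
\]
since $\tilde A^j=(P^{-1}AP)^j=P^{-1}A^jP$. A direct change of variables likewise yields
\[
D_P\, T_\gamma\, D_P^{-1} = T_{P^{-1}\gamma}, \qquad \gamma\in\Rn.
\]
Because $\Ga=P\tilde\Ga$, the map $\gamma\mapsto P^{-1}\gamma$ is a bijection of $\Ga$ onto $\tilde\Ga$, so this last identity sends the family of $\Ga$-translations bijectively onto the family of $\tilde\Ga$-translations.

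With these in hand the forward direction is immediate. Axioms (M1)--(M3) follow from $\tilde V_j=D_P(V_j)$ and the fact that the bijective isometry $D_P$ commutes with inclusion, intersection, and the closure operation: thus $\tilde V_j\subset\tilde V_{j+1}$, $\bigcap_j\tilde V_j=D_P(\bigcap_j V_j)=\{0\}$, and $\ov{\bigcup_j\tilde V_j}=D_P(\ov{\bigcup_j V_j})=D_P(\eln)=\eln$. For (M4) I would chain the equivalences $f\in\tilde V_0 \iff D_P^{-1}f\in V_0 \iff D_{A^j}D_P^{-1}f\in V_j \iff D_PD_{A^j}D_P^{-1}f\in\tilde V_j \iff D_{\tilde A^j}f\in\tilde V_j$, where the middle step is (M4) for $\{V_j\}$ and the last uses the first conjugation identity. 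For (M5) I would write each generator of $E^{\tilde\Ga}(\tilde\Phi)$ as $T_{\tilde\gamma}D_P\vp=D_PT_{P\tilde\gamma}\vp$ and let $\tilde\gamma$ range over $\tilde\Ga$, so that $P\tilde\gamma$ ranges over $\Ga$; this shows $E^{\tilde\Ga}(\tilde\Phi)=D_P\big(E^\Ga(\Phi)\big)$, and since $E^\Ga(\Phi)$ is an orthonormal basis of $V_0$ and $D_P$ is unitary, its image is an orthonormal basis of $\tilde V_0$.

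Finally, the converse requires no new work: it is the forward direction applied with $P^{-1}$ in place of $P$. Indeed $D_{P^{-1}}=D_P^{-1}$, the matrix $P^{-1}$ conjugates $\tilde A$ back to $P\tilde AP^{-1}=A$, and from $\Ga=P\tilde\Ga$ we obtain $\tilde\Ga=P^{-1}\Ga$, so the forward direction transforms any $(\tilde A,\tilde\Ga)$-MRA into an $(A,\Ga)$-MRA via $D_P^{-1}$, which is exactly the relation \eqref{resc0} read backwards. The only point demanding any care in the whole argument is the bookkeeping of the two conjugation identities together with the verification that $\gamma\mapsto P^{-1}\gamma$ is a bijection between the lattices; everything else is purely formal.
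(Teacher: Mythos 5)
Your proposal is correct and follows essentially the same route as the paper: both rest on the unitarity of $D_P$ together with the commutator identities $T_k D_P = D_P T_{Pk}$ and $D_{(P^{-1}AP)^j} D_P = D_P D_{A^j}$, and then push the axioms (M1)--(M5) through $D_P$ mechanically, obtaining the converse by applying the same argument with $P^{-1}$. Your write-up is more detailed than the paper's (which only spells out (M5)), but there is no difference in method.
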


\begin{proof}
The proof is a simple verification of properties of MRA in Definition \ref{DMRA} using the commutator
relations 
\[ T_k D_P = D_P T_{Pk}  \quad \text{and} \quad D_{(P^{-1}AP)^j}
D_P = D_P D_{A^j}, \ j\in \Z, k\in \Rn. \] 
 It is easy to show that any of the properties (M1)--(M5) of $\{V_j\}_{j\in\Z}$ imply the corresponding property for $\{\tilde V_j\}_{j\in\Z}$. For example, to show (M5) we observe that $D_P$ is an isometric isomorphism of $L^2(\Rn)$ and the image of orthonormal basis $E^\Ga(\Phi)$ under $D_P$ is an orthonormal basis $E^{\tilde \Ga}(\tilde \Phi)=D_P(E^\Ga(\Phi))$ of $\tilde V_0=D_P(V_0)$.
The converse is shown the same way, which completes the proof of the lemma.
\end{proof}

To show the existence of MRAs associated with the standard lattice $\Zn$, we shall employ the following lemma. Unlike the previous lemma, the dilation matrix remains fixed and instead the multiplicity of an MRA changes in Lemma \ref{req}. 

\begin{lemma}\label{req}
Let $A$ be an $n\times n$ expansive matrix with real entries. Suppose that for some rational lattice $\Gamma \subset \Q^n$, there exists $(A,\Ga)$-MRA $\{V_j\}_{j\in\Z}$ of multiplicity $N$. Then, there exists $(A,\Z^n)$-MRA $\{V'_j\}_{j\in\Z}$ of multiplicity $N'$, where $N'$ is a multiple of $N$.
\end{lemma}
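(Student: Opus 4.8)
The plan is to reduce everything to the convenient situation in which the translation lattice already contains $\Zn$. This is the heart of the matter: a shift-invariant space can only have its invariance \emph{restricted} to a coarser sublattice, never promoted to a finer one, so there is no way to pass from $\Ga$-invariance to $\Zn$-invariance unless $\Zn\subseteq\Ga$. A general full rank rational lattice $\Ga$ need not contain $\Zn$, and one cannot cure this by the rescaling of Lemma \ref{resc} alone, since a generic change of variables $P$ would in general replace $A$ by a different, merely similar matrix $P^{-1}AP$. The key observation that unlocks the argument is that \emph{scalar} matrices commute with $A$, so a scalar rescaling adjusts the lattice while leaving the dilation untouched.

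First I would record the elementary fact that, since $\Ga\subset\Qn$ is a full rank lattice, we may write $\Ga=M\Zn$ for some invertible matrix $M$ with rational entries; clearing the denominators of $M^{-1}$ produces a positive integer $p$ with $pM^{-1}$ an integer matrix, whence $p\Zn=M(pM^{-1}\Zn)\subseteq M\Zn=\Ga$. I would then apply Lemma \ref{resc} with $P=p\mathbf I$. Because $P$ is scalar it commutes with $A$, so the transported dilation is $\tilde A=P^{-1}AP=A$, while the transported lattice is $\tilde\Ga=P^{-1}\Ga=\tfrac1p\Ga$. Thus the given $(A,\Ga)$-MRA of multiplicity $N$ is carried to an $(A,\Ga')$-MRA of the same multiplicity $N$, with scaling vector $D_P(\Phi)$ and $\Ga'=\tfrac1p\Ga$. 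Crucially, dividing $p\Zn\subseteq\Ga$ by $p$ yields $\Zn\subseteq\Ga'$, so now the integer lattice sits inside the translation lattice.

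Finally, with $\Zn\subseteq\Ga'$ both of full rank, the index $d=[\Ga':\Zn]$ is finite. The core space $V_0'$ is $\Ga'$-shift invariant, hence in particular $\Zn$-shift invariant. Choosing coset representatives $\ga_1,\dots,\ga_d$ of $\Zn$ in $\Ga'$ and setting $\psi^{(s,i)}=T_{\ga_s}D_P\vp^i$, the orthonormal basis $E^{\Ga'}(D_P(\Phi))$ of $V_0'$ is precisely the system $\{T_k\psi^{(s,i)}:k\in\Zn,\ 1\le s\le d,\ 1\le i\le N\}$ of $\Zn$-translates, which verifies (M5) over $\Zn$ with $N'=Nd$ generators. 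Properties (M1)--(M4) are statements about the unchanged chain $\{V_j'\}$ and the unchanged dilation $A$, and so carry over verbatim. Hence $\{V_j'\}$ is an $(A,\Zn)$-MRA of multiplicity $N'=Nd$, a multiple of $N$, as required. I expect the only step requiring real care to be the commutation remark that makes Lemma \ref{resc} applicable with $A$ fixed; the coset-splitting of the basis and the persistence of (M1)--(M4) under a change of translation lattice are routine.
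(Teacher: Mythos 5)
Your proposal is correct and follows essentially the same route as the paper: rescale by a scalar integer matrix $P=p\mathbf I$ (which leaves $A$ fixed since scalars commute with everything) so that the new lattice $\tfrac1p\Ga$ contains $\Zn$, then split the orthonormal basis of the core space over the finitely many cosets of $\Zn$ to obtain multiplicity $N'=Nd$. The paper's proof is the same argument with $q$ and $r$ in place of your $p$ and $d$.
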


\begin{proof}
Since $\Ga$ is a rational lattice, there exists an integer $q$ such that $q\Zn \subset \Ga$. Let $\tilde \Ga= \frac{1}{q}\Ga$. The quotient group $(\tilde \Ga)/\Zn$ has a finite order $r$. Let $d_1,\ldots,d_r$ be representatives of its distinct cosets. Applying Lemma \ref{resc} with $P= q \mathbf I$ we deduce the existence of $(A,\tilde \Ga)$-MRA $\{\tilde V_j\}_{j\in\Z}$ and the scaling vector $\tilde \Phi$ of the same multiplicity $N$ as $\{V_j\}_{j\in\Z}$. Observe that
\[
E^{\tilde \Ga}(\tilde \Phi) = \{ T_{d_i+k} \tilde \vp: \tilde \vp \in \tilde \Phi,\ k\in \Zn, i=1,\ldots, r\} = \bigcup_{i=1}^r E^{\Zn}(T_{d_i}\tilde \Phi).
\]
Thus, the core space $\tilde V_0$ has an orthonormal basis of the form $E^{\Zn}(\Phi')$, where 
\[
\Phi' =  \bigcup_{i=1}^r T_{d_i}\tilde \Phi
\]
consists of $N'=rN$ functions. Consequently, $\{\tilde V_j\}_{j\in\Z}$ can be treated also as $(A,\Zn)$-MRA of multiplicity $N'$ with the scaling vector $\Phi'$. 
This completes the proof of Lemma \ref{req}.
\end{proof}

Analogues of Lemmas \ref{resc} and \ref{req} can be also easily formulated in terms of the existence of orthonormal wavelet bases, frame wavelets, or any other property that is preserved by the change of variables $D_P$ such as orthonormality or frame property. For example, variants of Lemma \ref{resc} have been used, at least implicitly, by many authors. However, the method of Lemma \ref{req} has not been exploited much compared to the standard change of variables argument of Lemma \ref{resc}. We are now ready to give the proof of the main result of this section.

\begin{proof}[Proof of Theorem \ref{MRA}]
By Lemma \ref{resc} it suffices to show the existence of $(A,\Zn)$-MRAs with scaling vector in the Schwartz class for any $n\times n$ real expansive dilation. By Lemma \ref{req}, the standard lattice $\Zn$ can be replaced by a suitable rational lattice $\Gamma \subset \Qn$. Using Lemma \ref{sz}, we shall show how to select $\Ga$, so that the corresponding MRA can be constructed using the SFS basis $E^\Zn(\mathcal W_n(\phi))$ from Theorem \ref{wiln}.

Let $B=A^{\top}$. By Lemma \ref{sz}, there exists an ellipsoid $\mathcal E$ of the form \eqref{sz2}, where $U\in O(n,\Q)$, that is expanding with respect to the action of $B$, i.e., \eqref{sz1} holds. Moreover, by rescaling ellipsoid $\mathcal E$ to $c \mathcal E$, where $c>0$ is sufficiently large, we can assume that all diagonal entries $p_1,\ldots, p_n$ of matrix $P$ satisfy $p_{i} > \sqrt{n}/(\lambda-1)$. Consequently,
\bq\label{sqn}
\begin{aligned}
\dist(\mathcal E, \Rn \setminus B(\mathcal E)) 
& \ge \dist (\mathcal E, \Rn \setminus \lambda \mathcal E) = \dist (P \mathbf B(0,1),\lambda P (\R^n \setminus \mathbf B(0,1))) 
\\
&\ge \dist ( \mathbf B(0,1),\lambda (\R^n \setminus \mathbf B(0,1))) \min_{i=1,\ldots,n} {p_i}  = (\lambda-1)\min_{i=1,\ldots,n} {p_i} >\sqrt{n}.
\end{aligned}
\eq 
Define the rational lattice $\Gamma = U^{-1} \Zn$. By the above argument it suffices to construct $(A,\Gamma)$-MRA with a scaling vector in the Schwartz class.

Consider the orthonormal SFS basis obtained by applying the dilation operator $D_U$ to the standard SFS basis from Theorem \ref{wiln}
\begin{equation}\label{du}
E^{\Gamma}(D_U(\mathcal W_n(\phi)))= D_U(E^{\Zn}(\mathcal W_n(\phi)))= \{ T_{\gamma} D_U w_{\mathbf j}: \gamma \in \Gamma, \ \mathbf j \in \N_0^n \}.
\end{equation}
For $j \ge 0$, we let 
\[
I_j = (-(j+1)/2-\delta,-j/2+\delta) \cup (j/2-\delta,(j+1)/2+\delta).
\]
By \eqref{supp}, for any $\mathbf j=(j_1,\ldots j_n) \in \N_0^n$,
\bq\label{supp2}
\supp \widehat{ D_U w_{\mathbf j}} = \supp D_U \widehat{w_{\mathbf j}} = U^{\top}(\supp \widehat w_{\mathbf j})
\subset
U^{\top}(I_{\mathbf j}), \quad\text{where } I_{\mathbf j}= I_{j_1} \times \ldots \times I_{j_n}.
\eq
Define $\mathbf J \subset \N_0^n$ by
\bq\label{Jd}
\mathbf J = \{\mathbf j \in  \N_0^n: \mathcal E \cap U^{\top}(I_{\mathbf j}) \ne \emptyset \}.
\eq
Clearly, $\mathbf J$ is a finite set. Define $\Phi$, which we shall later prove to be a scaling vector of an MRA, and the corresponding core space $V_0$, by
\bq\label{V0}
\Phi = \{ D_U  w_{\mathbf j}: \mathbf j \in \mathbf J \}, \qquad
V_0 = \ov{\spa} \{ T_\ga \vp: \ga \in \Ga, \ \vp \in \Phi \}.
\eq
By Theorem \ref{wiln} and the identity \eqref{du}, $E^\Ga(\Phi)$ is an orthonormal basis of its closed linear span $V_0$.

\begin{figure}\label{figure2}

\begin{tikzpicture}[scale=1.5]

  \draw[step=.5cm,gray,very thin] (-2.4,-3.4) grid (2.4,3.4);
 % \draw (-1.5,0) -- (1.5,0);
  %\draw (0,-1.5) -- (0,1.5);
   \draw (0,0) ellipse [x radius=1.2, y radius=1.7];
   \draw[dashed] (0,-1.7) -- (0,1.7);
   \draw[dashed] (-1.2,0) -- (1.2,0);
   
   \draw[rotate=-10] (0,0) ellipse [x radius=2, y radius=3];
   \draw[rotate=-10, dashed] (0,-3) -- (0,3);
  \draw[rotate=-10, dashed]  (-2,0) -- (2,0);

   \draw[blue!50!black, thick] (-0.5,-0.5) rectangle (0.5,0.5);
   \draw (0,0) node {$\widehat w_{00}$};
   
   \draw[blue!50!black, thick]  (-0.5,-1) rectangle (0.5,-0.5);
   \draw (0,-.75) node {$\widehat w_{01}$};
   \draw[blue!50!black, thick]  (-0.5,0.5) rectangle (0.5,1);
   \draw (0,.75) node {$\widehat w_{01}$};

   \draw[blue!50!black, thick]  (-0.5,-1.5) rectangle (0.5,-1);
   \draw (0,-1.25) node {$\widehat w_{02}$};
   \draw[blue!50!black, thick]  (-0.5,1) rectangle (0.5,1.5);
   \draw (0,1.25) node {$\widehat w_{02}$};
   
   \draw[blue!50!black, thick]  (-0.5,-2) rectangle (0.5,-1.5);
   \draw (0,-1.75) node {$\widehat w_{03}$};
   \draw[blue!50!black, thick]  (-0.5,1.5) rectangle (0.5,2);
   \draw (0,1.75) node {$\widehat w_{03}$};
   
   \draw[blue!50!black, thick]  (-1,-0.5) rectangle (-0.5,0.5);
   \draw (-.75,0) node {$\widehat w_{10}$};
   \draw[blue!50!black, thick]  (0.5,-0.5) rectangle (1,0.5);
   \draw (.75,0) node {$\widehat w_{10}$};   
   
   \draw[blue!50!black, thick]  (-1.5,-0.5) rectangle (-1,0.5);
   \draw (-1.25,0) node {$\widehat w_{20}$};
   \draw[blue!50!black, thick]  (1,-0.5) rectangle (1.5,0.5);
   \draw (1.25,0) node {$\widehat w_{20}$};   
   
    \draw[blue!50!black, thick]  (-1,-1) rectangle (-.5,-.5);
    \draw (-.75,-.75) node {$\widehat w_{11}$};
    \draw[blue!50!black, thick]  (0.5,-1) rectangle (1,-.5);
    \draw (.75,-.75) node {$\widehat w_{11}$};
     \draw[blue!50!black, thick]  (-1,0.5) rectangle (-.5,1);
    \draw (-.75,.75) node {$\widehat w_{11}$};
    \draw[blue!50!black, thick]  (0.5,0.5) rectangle (1,1);
    \draw (.75,.75) node {$\widehat w_{11}$};
    
     \draw[blue!50!black, thick]  (-1.5,-1) rectangle (-1,-.5);
    \draw (-1.25,-.75) node {$\widehat w_{21}$};
    \draw[blue!50!black, thick]  (1,-1) rectangle (1.5,-.5);
    \draw (1.25,-.75) node {$\widehat w_{21}$};
     \draw[blue!50!black, thick]  (-1.5,0.5) rectangle (-1,1);
    \draw (-1.25,.75) node {$\widehat w_{21}$};
    \draw[blue!50!black, thick]  (1,0.5) rectangle (1.5,1);
    \draw (1.25,.75) node {$\widehat w_{21}$};
    
    \draw[blue!50!black, thick]  (-1,-1.5) rectangle (-.5,-1);
    \draw (-.75,-1.25) node {$\widehat w_{12}$};
    \draw[blue!50!black, thick]  (0.5,-1.5) rectangle (1,-1);
    \draw (.75,-1.25) node {$\widehat w_{12}$};
     \draw[blue!50!black, thick]  (-1,1) rectangle (-.5,1.5);
    \draw (-.75,1.25) node {$\widehat w_{12}$};
    \draw[blue!50!black, thick]  (0.5,1) rectangle (1,1.5);
    \draw (.75,1.25) node {$\widehat w_{12}$};
    
    \draw[blue!50!black, thick]  (-1,-2) rectangle (-.5,-1.5);
    \draw (-.75,-1.75) node {$\widehat w_{13}$};
    \draw[blue!50!black, thick]  (0.5,-2) rectangle (1,-1.5);
    \draw (.75,-1.75) node {$\widehat w_{13}$};
     \draw[blue!50!black, thick]  (-1,1.5) rectangle (-.5,2);
    \draw (-.75,1.75) node {$\widehat w_{13}$};
    \draw[blue!50!black, thick]  (0.5,1.5) rectangle (1,2);
    \draw (.75,1.75) node {$\widehat w_{13}$};
    
\end{tikzpicture}

\caption{Ellipsoid $\mathcal E$ and its image under expansive dilation $B$ superimposed with the supports of the SFS basis generators $\widehat w_{(j_1,j_2)}$ constituting the scaling vector $\Phi$.}
\end{figure}
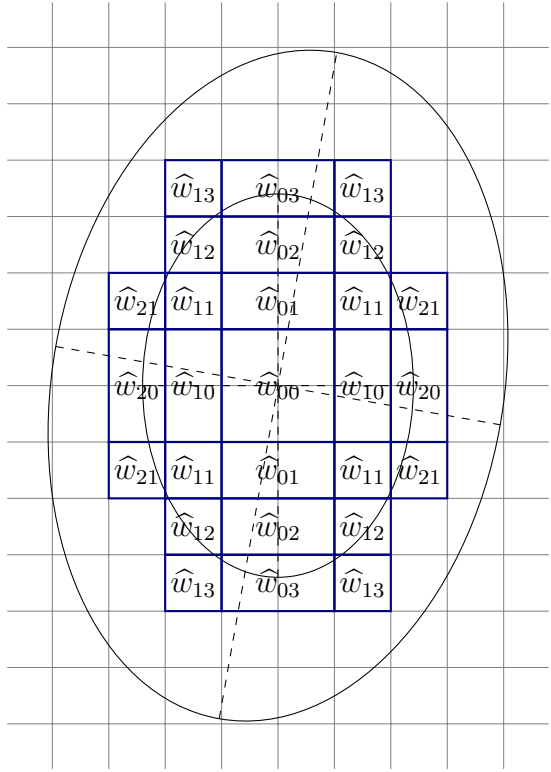

Define the spaces $V_j$ by $V_j=D_{A^j}(V_0)$, $j\in\Z$. Thus, properties (M4) and (M5) hold automatically by the definition. To verify the remaining MRA properties, we need to make some additional observations. By Theorem \ref{wiln}, \eqref{supp2}, \eqref{Jd}, and \eqref{V0}, we have
\bq\label{mra5}
\check L^2(\mathcal E) =  \{ f \in L^2(\Rn): \supp \widehat f \subset \mathcal E\}  \subset V_0.
\eq
Indeed, by \eqref{supp2} and \eqref{Jd}, any $f\in \check L^2(\mathcal E)$ satisfies 
\[
\lan f, T_\gamma D_U w_{\mathbf j} \ran =0 \qquad\text{for all } \ga\in \Ga, \ \mathbf j \in \N_0^n \setminus \mathbf J.
\]
On the other hand, we claim that 
\bq\label{mra6}
V_0 \subset \check L^2(B(\mathcal E)) = \{ f \in L^2(\Rn): \supp \widehat f \subset B(\mathcal E)\}.
\eq
To show \eqref{mra6} we need a careful geometric argument using  \eqref{sqn}. Note that each set $I_{\mathbf j}$ is the union of $2^n$ cubes of the form 
\bq\label{mra7}
\pm (j_1/2-\delta,(j_1+1)/2+\delta) \times \ldots \times \pm (j_n/2-\delta,(j_n+1)/2+\delta)
\eq
for all possible $2^n$ choices of signs $\pm$. Thus, if $\mathbf j \in J$, then by \eqref{Jd}, $P(\mathbf B(0,1)) \cap I_{\mathbf j} \ne\emptyset$. Thus, the ellipsoid $P(\mathbf B(0,1))$ must non-trivially intersect with one of the cubes of the form \eqref{mra7}. Since $P(\mathbf B(0,1))$ is symmetric with respect to coordinate axes, it must intersect non-trivially with all cubes of the form \eqref{mra7}. Now, each such cube has diameter $(1/2+2\delta)\sqrt{n}\le \sqrt{n}$ because $\delta\le 1/4$. Thus, each set $I_{\mathbf j}$, $\mathbf j \in \mathbf J$, is contained in $\sqrt{n}$-neighborhood of the ellipsoid $P(\mathbf B(0,1))$. By \eqref{supp2}, this implies that 
$\supp D_U \widehat w_{\mathbf j}$ is contained in $\sqrt{n}$-neighborhood of the ellipsoid $\mathcal E$, which in turn is contained in $B(\mathcal E)$ in light of \eqref{sqn}. This shows \eqref{mra6}.

The verification of MRA properties (M1)--(M3) is now a routine. By \eqref{mra5} and \eqref{mra6}, we show (M1)
\[
V_j \subset D_{A^j}(\check L^2(B(\mathcal E)))= \check L^2(B^{j+1}(\mathcal E)) = D_{A^{j+1}}(\check L^2(\mathcal E)) \subset V_{j+1}  \qquad\text{for } j\in \Z.
\]
Since $B$ is an expansive matrix we have $\bigcap_{j\in \Z} B^j(\mathcal E) = \{0\}$ and $\bigcup_{j\in \Z} B^j(\mathcal E) = \Rn$. This implies (M2) and (M3) and completes the proof of Theorem \ref{MRA}.
\end{proof}

\section{Constructing wavelets from rationally dilated MRAs}\label{S4}

In this section we describe a general construction procedure of a wavelet basis out of an MRA which is associated with  an expansive dilation matrix $A$ with rational entries. This subject has been studied by several authors in the setting of integer dilations \cite{B2, CHM, CMX, DiR, Gro0, Han, KLR, PR, Str, Woj}.
Unlike the well-studied case of dilations with integer entries, we need to work with a larger lattice $\Gamma$ than the standard lattice $\Z^n$. This is a consequence of the work of Hoover and the author on rationally dilated wavelet dimension function \cite{BH}.

By \cite[Lemma 4.1]{BH} a necessary condition for $(A,\Ga)$-MRA $\{V_j\}_{j\in\Z}$ to yield an orthogonal (or more generally, semi-orthogonal) wavelet is that $V_0$ is shift invariant under shifts of the lattice $A\Zn+\Zn$. In particular, if $A$ has rational entries, then a lattice $\Ga$ must necessarily contain $A\Zn+\Zn$ as its sublattice. In the special case when $A$ has integer entries, this merely means that the space $V_0$ is $\Z^n$-SI. However, for general rational $A$ it puts an additional constraint on the degree of shift-invariance of $V_0$.
In addition, if we require that our wavelet $\Psi$ satisfies a mild regularity (continuity and decay of $\widehat \Psi$), then by \cite[Theorem 6.4]{BH}, this imposes constraints also on the multiplicity of the MRA $\{V_j\}_{j\in\Z}$. Hence, it should not come as a surprise that a successful construction of even mildly regular wavelets for rational dilations requires a careful coordination of the choice of the lattice $\Ga$ and to a lesser degree the multiplicity of an MRA. 
Consequently, in this section we shall assume that we are given $(A,\Ga)$-MRA $\{V_j\}_{j\in\Z}$ with the lattice $\Ga=A\Zn+\Zn$ with a multiplicity $N\in N$. The existence of such MRA with a scaling vector function $\Phi$ in the Schwartz class is guaranteed by Theorem \ref{MRA}. 

Once this framework is adopted, the construction of rationally dilated wavelets follows a similar scheme as that for integer dilations. The refinability of the scaling vector $\Phi$ implies the existence of a matrix-valued low-pass filter $M$. If $A$ is an integer dilation, then $M$ has values in square $N\times N$ matrices. However, if $A$ is a rational dilation, then $M$ takes values in rectangular $N |\Gamma/\Z^n| \times N$ matrices. The corresponding wavelets are then constructed by appropriate choice of a matrix-valued low-pass filter $H$. If $A$ is an integer dilation, then $H$ has values in $N(|\det A|-1) \times N$ matrices. However, for rational dilations $A$, the values of $H$ are matrices of size $N(|\Ga/(A\Zn)|-|\Ga/\Zn|) \times N$. The commonality between integer and rational dilations is that the orthonormality of the resulting wavelets is equivalent to almost everywhere orthogonality of a square matrix of the form 
\[
\frac{1}{\sqrt p}
\begin{bmatrix} 
M(\xi+\omega_1) & M(\xi+\omega_2) & \ldots & M(\xi+\omega_p)
\\
H(\xi+\omega_1) & H(\xi+\omega_2) & \ldots & H(\xi+\omega_p)
\end{bmatrix},
\]
where $\omega_1,\ldots,\omega_p$ are cosets representatives of the quotient group $(A\Z^n)^*/\Ga^*$. Here, given a full rank lattice $\Gamma$, we define its dual lattice by
\[
\Gamma^*= \{ k\in \R^n: \langle k, \gamma \rangle \in \Z \quad\text{for all }\gamma \in \Gamma\}.
\]

We start with a basic result on finitely generated shift-invariant spaces \cite[Theorem 1.7]{BDR1} that follows from Helson's characterization of SI spaces in terms of range functions \cite[Proposition 1.5]{B0}. Note that Lemma \ref{L1} does not require any stability assumptions on the shift-invariant system generated by $\Phi$, though we will later assume that $E^\Ga(\Phi)$ is an orthonormal sequence. Given $K \times N$ matrix $M=(m_{i,j})_{i\in [K], j\in [N]}$, its Frobenius norm is defined as
\[
||M||_F = \sqrt{ \tr{(M^* M)}} = \bigg( \sum_{i=1}^K \sum_{j=1}^N |m_{i,j}|^2 \bigg)^{1/2}.
\]

\begin{lemma}\label{L1}
Suppose that $V_0=\ov{\spa}{E^\Ga(\Phi)}$, where $\Phi=\{\vp^1, \ldots, \vp^N \} \subset L^2(\Rn)$ and $\Ga$ is a full rank lattice. Let $f\in L^2(\Rn)$. Then $f \in V_0$ $\iff$ there exists a measurable $\Ga^*$-periodic function $M_f: \Rn \to M_{1\times N}(\C)$ such that:
\begin{equation}\label{L2}
\widehat f(\xi) = M_f(\xi) \widehat \Phi(\xi)\qquad\text{for a.e. }\xi \in \R^n,\text{ where } 
\widehat \Phi(\xi) = \begin{bmatrix} \widehat \vp^1(\xi) \\
 \vdots \\ \widehat \vp^N(\xi)
\end{bmatrix}.
\end{equation}
In addition, if $E^\Gamma(\Phi)$ is an orthonormal basis of $V_0$, then 
\[
||f||^2 = \vol(\Ga) \int_{\Rn/\Ga^*} ||M_f(\xi)||_F^2 d\xi \qquad\text{for all }f\in V_0.
\]
\end{lemma}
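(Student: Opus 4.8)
The plan is to deduce both assertions from the fiber (range function) description of finitely generated shift-invariant spaces, combined with a Plancherel/periodization computation of the coefficients of $f$ against $E^\Ga(\Phi)$. Since the cited results \cite[Proposition 1.5]{B0} and \cite[Theorem 1.7]{BDR1} are stated for the standard lattice, I would first note that the general case reduces to $\Ga=\Zn$ by a dilation change of variables: choosing $P\in GL(n,\R)$ with $\Ga=P\Zn$, the isometry $D_P$ carries $V_0$ onto the $\Zn$-shift-invariant space generated by $D_P\Phi$, conjugates $T_\ga$ into a $\Zn$-translate, and rescales $\widehat\Phi$ and the periodicity lattice $\Ga^*$ accordingly, so the identities for $\Ga$ follow from those for $\Zn$ once the Jacobian $|\det P|=\vol(\Ga)$ is tracked. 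Alternatively one works directly with $\Ga$, since the range function machinery extends verbatim.

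For the characterization, I introduce the fiberization isometry $\mathcal T f(\xi)=(\widehat f(\xi+d))_{d\in\Ga^*}$ from $L^2(\Rn)$ onto $L^2(\Rn/\Ga^*,\ell^2(\Ga^*))$. By Helson's theorem \cite[Proposition 1.5]{B0}, $V_0$ is described by the measurable range function $J(\xi)=\spa\{\mathcal T\vp^i(\xi):i=1,\ldots,N\}$, and $f\in V_0$ if and only if $\mathcal Tf(\xi)\in J(\xi)$ for a.e.\ $\xi$. The decisive structural point is that each fiber $J(\xi)$ is spanned by only $N$ vectors, hence finite-dimensional and already closed; therefore $\mathcal Tf(\xi)\in J(\xi)$ is equivalent to the existence of coefficients $c_1(\xi),\ldots,c_N(\xi)$ with $\mathcal Tf(\xi)=\sum_i c_i(\xi)\mathcal T\vp^i(\xi)$, which can be chosen measurably in $\xi$. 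Reading this identity componentwise over $d\in\Ga^*$ and extending $(c_1,\ldots,c_N)$ to a $\Ga^*$-periodic row function $M_f$ on $\Rn$ yields exactly $\widehat f(\xi)=M_f(\xi)\widehat\Phi(\xi)$ a.e.; conversely, any $\Ga^*$-periodic $M_f$ with $\widehat f=M_f\widehat\Phi$ and $f\in L^2(\Rn)$ gives $\mathcal Tf(\xi)\in J(\xi)$, hence $f\in V_0$. This is precisely \cite[Theorem 1.7]{BDR1}.

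For the norm identity I compute the frame coefficients of $f$. By Plancherel and periodization over $\Ga^*$ (as already carried out in the proof of Theorem \ref{wil}),
\[
\lan f, T_\ga \vp^i\ran = \int_{\Rn/\Ga^*} e^{2\pi i \lan \ga, \xi\ran}\,[\widehat f, \widehat\vp^i](\xi)\, d\xi, \qquad [\widehat f,\widehat\vp^i](\xi) := \sum_{d\in\Ga^*}\widehat f(\xi+d)\ov{\widehat\vp^i(\xi+d)},
\]
so $\lan f, T_\ga\vp^i\ran$ is, up to the factor $\vol(\Ga^*)$, a Fourier coefficient of the $\Ga^*$-periodic bracket $[\widehat f,\widehat\vp^i]$. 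Parseval's identity on $L^2(\Rn/\Ga^*)$ with the characters $\{e^{2\pi i\lan\ga,\cdot\ran}\}_{\ga\in\Ga}$ then gives $\sum_\ga |\lan f,T_\ga\vp^i\ran|^2 = \vol(\Ga^*)\int_{\Rn/\Ga^*}|[\widehat f,\widehat\vp^i]|^2\,d\xi$. Substituting $\widehat f=M_f\widehat\Phi$ and using that orthonormality of $E^\Ga(\Phi)$ is equivalent to the Gramian identity $[\widehat\vp^j,\widehat\vp^i](\xi)=\vol(\Ga)\,\delta_{ij}$ collapses the bracket to $[\widehat f,\widehat\vp^i]=\vol(\Ga)\,M_{f,i}$. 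Summing over $i$ and using $\vol(\Ga^*)\vol(\Ga)^2=\vol(\Ga)$ produces the stated formula $\|f\|^2=\vol(\Ga)\int_{\Rn/\Ga^*}\|M_f(\xi)\|_F^2\,d\xi$.

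The routine parts are the Plancherel/periodization manipulations and the bookkeeping of the reciprocal covolumes $\vol(\Ga^*)=\vol(\Ga)^{-1}$, together with the derivation of the Gramian normalization from orthonormality. The only genuinely delicate point is the \emph{measurable} choice of $M_f$, i.e.\ that fiberwise membership in the finite-dimensional spans $J(\xi)$ can be realized by a single measurable $\Ga^*$-periodic matrix function rather than merely pointwise coefficients. This is exactly the content of Helson's range function theorem and \cite[Theorem 1.7]{BDR1}, so I would invoke those results rather than reprove the selection argument.
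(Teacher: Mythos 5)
Your overall architecture (reduce to $\Zn$ by a dilation change of variables, characterize membership in $V_0$ via the fiberization operator and Helson's range function, then compute the norm via the Gramian identity) matches the paper's. However, there is a genuine gap at exactly the point you yourself flag as delicate: the \emph{measurability} of $M_f$. You assert that a measurable choice of the fiberwise coefficients ``is exactly the content of Helson's range function theorem and \cite[Theorem 1.7]{BDR1}'' and decline to prove it. The paper takes the opposite view: it states explicitly that \cite[Theorem 1.7]{BDR1} yields the representation $\widehat f = M_f\widehat\Phi$ with $M_f$ far from unique and \emph{not} guaranteed measurable, and then spends the first half of its proof supplying the missing argument --- an induction on the number of generators, where the base case $N=1$ uses that $M_f$ is uniquely determined (hence measurable) on the spectrum $\sigma(V_0)$ and is set to zero off it, and the inductive step decomposes $f = Pf + g$ with $P$ the orthogonal projection onto the principal shift-invariant space generated by $\vp^{N+1}$ and $g$ in the span of the remaining generators. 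Your ``hence finite-dimensional, therefore the coefficients can be chosen measurably'' elides the real issue: when the fibers $\mathcal T\vp^1(\xi),\ldots,\mathcal T\vp^N(\xi)$ are linearly dependent on a set of positive measure, the coefficients are non-unique and a pointwise choice need not be jointly measurable. The gap is fillable (e.g.\ by taking $M_f(\xi)$ to be the bracket row $\bigl[\widehat f,\widehat\vp^i\bigr](\xi)$ multiplied by the Moore--Penrose pseudo-inverse of the Gramian, which is measurable on each constant-rank stratum, or by the paper's induction), but some such argument must actually be given rather than attributed to citations that do not contain it.

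The norm identity is correct in your version, and your constant bookkeeping ($\vol(\Ga^*)\vol(\Ga)^2=\vol(\Ga)$) checks out, but you take a slightly longer route than the paper: you expand $f$ in the orthonormal basis $E^\Ga(\Phi)$, identify $\lan f, T_\ga\vp^i\ran$ as Fourier coefficients of the bracket $[\widehat f,\widehat\vp^i]$, and apply Parseval on $\Rn/\Ga^*$. The paper instead applies Plancherel directly to $\|\widehat f\|^2=\int M_f\widehat\Phi\widehat\Phi^*M_f^*$, periodizes, and inserts the Gramian identity $\sum_{k\in\Ga^*}\widehat\Phi(\xi+k)\widehat\Phi^*(\xi+k)=\vol(\Ga)\mathbf I_N$ in one step. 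Both are valid; the paper's is shorter and does not pass through the frame coefficients.
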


\begin{proof}
A standard scaling argument reduces the proof to the setting of the standard lattice case $\Ga=\Zn$. Then, \eqref{L2} follows from \cite[Theorem 1.7]{BDR1}, albeit a function $M_f$, which is in general far from being unique, is not guaranteed to be measurable. To achieve the measurability of $M_f$, we need to use an inductive argument on the number of generators in $\Phi$. 

If $N=1$, then $M_f$ is uniquely determined, and hence measurable, on the spectrum of $V_0$ defined as 
\[
\sigma(V_0)= \{\xi \in \T^n: \dim J(\xi) >0 \},
\]
where $J$ is the range function corresponding to $V_0$ in Helson's characterization of SI spaces, see \cite[Proposition 1.5]{B0}. Outside of the spectrum of $V_0$, $M_f$ can take arbitrary values, so to guarantee its measurability we shall impose that $M_f(\xi)=0$ for $\xi \not \in \sigma(V_0)$. 

Suppose that Lemma \ref{L1} holds for SI space $V_0$ generated by $\Phi=\{\vp^1,\ldots,\vp^N\}$, $N\ge 1$. Let $V'_0$ be a SI space generated by one more generator, that is, by $\Phi'=\Phi \cup \{\vp^{N+1}\}$. Let $P$ be the orthogonal projection onto principal shift-invariant space (PSI) generated by $\vp^{N+1}$. Take any $f\in V_0'$ and decompose it as $f=Pf+g$, where $g\in V_0$. By inductive hypothesis there exists a measurable $\Zn$-periodic $M_{1\times N}(\C)$-valued function $M_g$ such that $\widehat g(\xi)=M_g(\xi) \widehat \Phi(\xi)$. Likewise, $(P_Wf)\widehat{}(\xi)=m(\xi) \widehat \vp^{N+1}(\xi)$ for some  measurable $\Zn$-periodic scalar-valued $m$. Then,
\[
\widehat f(\xi) = M_f(\xi) \widehat \Phi'(\xi)\qquad\text{for a.e. }\xi \in \R^n,\text{ where } M_f(\xi) = [ M_g(\xi) \ m(\xi) ].
\]

A standard result about SI system says that $E^\Ga(\Phi)$ is an orthonormal sequence $\iff$ a Gramian matrix of its fiberization vectors is the identity a.e. (modulo a suitable normalization factor). That is, 
\begin{equation}\label{L5a}
\sum_{k\in \Ga^*} \widehat\Phi(\xi+k)\widehat \Phi^*(\xi+k) = \vol(\Ga) \mathbf I_N  \qquad\text{a.e. }\xi.
\end{equation}
Here, $\vol(\Ga)$ is the volume of the lattice $\Ga$ defined as the measure of a fundamental domain of $\Rn/\Ga$ and $\mathbf I_N$ is an $N\times N$ identity matrix. 
Hence, if $E^\Gamma(\Phi)$ is an orthonormal basis of $V_0$, then for any $f\in V_0$ we have
\[
\begin{aligned}
||f||^2 = \int_{\Rn} M_f(\xi) \widehat \Phi(\xi) \widehat \Phi(\xi)^* M_f(\xi)^* d\xi 
&= \int_{\Rn/\Ga^*} M_f(\xi) \bigg( \sum_{k\in \Gamma^*} \widehat \Phi(\xi+k) \widehat \Phi(\xi+k)^* \bigg) M_f(\xi)^* d\xi 
\\
& = \vol(\Ga) \int_{\Rn/\Ga^*} ||M_f(\xi)||_F^2 d\xi . \hskip3.3cm \qedhere
\end{aligned}
\]

\end{proof}

The following lemma shows a generalization of the fundamental equation satisfied by a low-pass filter corresponding to a scaling function of an MRA. In this context the identity \eqref{L5} is often referred to as the Smith-Barnwell equation or the quadrature mirror equation. An original, and thus atypical, feature of Lemma \ref{L3} is that we do not require any refinability of the space $V_0$, though we will later assume that $V_0$ is refinable.

\begin{lemma}\label{L3}
Let $\Lambda$ and $\Gamma$ be two full rank lattice such that $\Lambda \subset \Gamma$.
Suppose that $E^\Gamma(\Phi)$ is an orthonormal basis of its closed linear span $V_0$, where $\Phi=\{\vp^1, \ldots, \vp^N \} \subset L^2(\Rn)$. Let $\Psi=\{\psi^1, \ldots, \psi^{K} \} \subset V_0$. Let $M_\Psi: \Rn \to M_{K \times N}(\C)$ be a measurable $\Ga^*$-periodic function such that
\begin{equation}\label{L4}
\widehat \Psi(\xi) = M_\Psi(\xi) \widehat \Phi(\xi)\qquad\text{for a.e. }\xi \in \R^n,
\text{ where } 
\widehat \Psi(\xi) = \begin{bmatrix} \widehat \psi^1(\xi) \\
 \vdots \\ \widehat \psi^K(\xi)
\end{bmatrix}.
\end{equation}
Let $\{\omega_1,\ldots,\omega_p\}$ be a transversal set of $\Lambda^*/\Ga^*$, i.e., representatives of distinct cosets, and let $p=|\Lambda^*/\Ga^*|$.
Then, $E^{\Lambda}(\Psi)$ is an orthonormal sequence $\iff$
\begin{equation}\label{L5}
\sum_{j=1}^p M_\Psi(\xi+\omega_j) (M_\Psi)^*(\xi+\omega_j)=p \mathbf I_K
\qquad\text{for a.e. }\xi \in \R^n.
\end{equation}
Moreover, $E^{\Lambda}(\Psi)$ is an orthonormal basis of $V_0$ $\iff$ $K=N p$ and \eqref{L5} holds.
\end{lemma}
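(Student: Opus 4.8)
The plan is to reduce \eqref{L5} to the fiberization/Gramian criterion for orthonormal systems already used in the proof of Lemma \ref{L1}, namely the identity \eqref{L5a}. First I would apply the scaling argument as in Lemma \ref{L1} to reduce to the case $\Ga = \Zn$, so that $\Lambda \subset \Zn$ is a sublattice and $\vol(\Ga)=1$. The key observation is that the dual lattice $\Lambda^*$ contains $\Ga^* = \Zn$ as a sublattice (since $\Lambda \subset \Ga$ forces $\Ga^* \subset \Lambda^*$), and the index satisfies $p = |\Lambda^*/\Ga^*| = |\Ga/\Lambda| = \vol(\Lambda)/\vol(\Ga)$. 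Thus the transversal set $\{\omega_1,\ldots,\omega_p\}$ allows one to split the full dual lattice as a disjoint union $\Lambda^* = \bigsqcup_{j=1}^p (\omega_j + \Ga^*)$.

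The main computation is to write out the Gramian criterion for $E^\Lambda(\Psi)$ directly. By the standard SI characterization (the same result giving \eqref{L5a}, but now for the lattice $\Lambda$), $E^\Lambda(\Psi)$ is an orthonormal sequence if and only if
\begin{equation*}
\sum_{\ell \in \Lambda^*} \widehat\Psi(\xi+\ell)\widehat\Psi^*(\xi+\ell) = \vol(\Lambda)\,\mathbf I_K \qquad\text{a.e. }\xi.
\end{equation*}
I would substitute \eqref{L4} into this, splitting the sum over $\Lambda^*$ according to the coset decomposition above:
\begin{equation*}
\sum_{j=1}^p \sum_{k\in\Ga^*} M_\Psi(\xi+\omega_j+k)\,\widehat\Phi(\xi+\omega_j+k)\widehat\Phi^*(\xi+\omega_j+k)\,M_\Psi^*(\xi+\omega_j+k).
\end{equation*}
Here is the crucial point where the hypotheses combine: $M_\Psi$ is $\Ga^*$-periodic, so $M_\Psi(\xi+\omega_j+k) = M_\Psi(\xi+\omega_j)$ and it pulls out of the inner sum over $k\in\Ga^*$; what remains inside is exactly $\sum_{k\in\Ga^*}\widehat\Phi(\xi+\omega_j+k)\widehat\Phi^*(\xi+\omega_j+k)$, which by the orthonormality of $E^\Ga(\Phi)$ and \eqref{L5a} equals $\vol(\Ga)\,\mathbf I_N$ (evaluated at the shifted point $\xi+\omega_j$, but the identity holds a.e.). Collapsing the inner sum turns the whole expression into $\vol(\Ga)\sum_{j=1}^p M_\Psi(\xi+\omega_j) M_\Psi^*(\xi+\omega_j)$, and since $\vol(\Lambda)=p\,\vol(\Ga)$, the Gramian criterion becomes precisely \eqref{L5}. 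This proves the first equivalence.

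For the ``moreover'' clause, I would observe that $E^\Lambda(\Psi)$ already lies in $V_0$ by hypothesis, so it is an orthonormal \emph{basis} of $V_0$ exactly when the number of functions matches the ``dimension'' of $V_0$ as a $\Lambda$-shift invariant space. Since $E^\Ga(\Phi)$ is an orthonormal basis of $V_0$ with $N$ generators at lattice $\Ga$, and passing from $\Ga$ to the sublattice $\Lambda$ multiplies the required number of generators by the index $p=|\Ga/\Lambda|$, a complete orthonormal generating set for $V_0$ under $\Lambda$-translates must have $Np$ elements; this can be made precise via the fiber dimension count, comparing $\dim$ of the fibers of $V_0$ over $\Rn/\Lambda^*$ against $\Rn/\Ga^*$, or by noting that an orthonormal sequence in $V_0$ of the right cardinality that spans is automatically a basis. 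The one point requiring care is that \eqref{L5} only guarantees the orthonormality (sequence) property, not spanning; so for the basis direction I would argue that when $K=Np$ the matrix condition \eqref{L5} forces the $p$ blocks $M_\Psi(\xi+\omega_j)$ to assemble into a square $Np\times Np$ matrix whose rows are orthonormal, hence it is unitary a.e., which in turn forces the $\Lambda$-fiberization of $\Psi$ to fill out the entire fiber of $V_0$. I expect this dimension-matching and spanning verification to be the only genuinely delicate step; the filter identity itself is a clean consequence of $\Ga^*$-periodicity of $M_\Psi$ together with \eqref{L5a}.
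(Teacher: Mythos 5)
Your proposal is correct and follows essentially the same route as the paper: the equivalence \eqref{L5} is obtained by writing the Gramian criterion for $E^{\Lambda}(\Psi)$, splitting the sum over $\Lambda^*$ into the cosets $\omega_j+\Gamma^*$, pulling out $M_\Psi$ by $\Gamma^*$-periodicity, and collapsing the inner sum via \eqref{L5a}, exactly as in the paper. The ``moreover'' clause is likewise handled in the paper by the same dimension count you sketch, namely $\dim_{V_0}^{\Lambda}(\xi)=\sum_{j=1}^{p}\dim_{V_0}^{\Gamma}(\xi+\omega_j)=Np$ (citing \cite[Lemma 2.5]{BH}), so that an orthonormal sequence $E^{\Lambda}(\Psi)$ is complete in $V_0$ precisely when $K=Np$.
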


\begin{proof}
The existence of function $M_\Psi$ is a consequence of Lemma \ref{L1}.
Recall that $E^{\Lambda}(\Psi)$ is an orthogonal sequence $\iff$ 
\[
 \sum_{k\in \Lambda^*} \widehat\Psi(\xi+k)\widehat \Psi^*(\xi+k) = \vol(\Lambda)\mathbf I_K \qquad\text{for a.e. }\xi.
 \]
On the other hand,
\begin{align*}
 \sum_{k\in \Lambda^*} & \widehat\Psi(\xi+k)\widehat \Psi^*(\xi+k)
 =
\sum_{k\in \Lambda^*} M_\Psi(\xi+k) \widehat\Phi(\xi+k)\widehat \Phi^*(\xi+k)(M_\Psi)^*(\xi+k)
\\
&= \sum_{j=1}^p \sum_{k\in \Ga^*} 
 M_\Psi(\xi+\omega_j+k) \widehat\Phi(\xi+\omega_j+k)\widehat \Phi^*(\xi+\omega_j+k)(M_\Psi)^*(\xi+\omega_j+k)
\\
&= \sum_{j=1}^p 
 M_\Psi(\xi+\omega_j) \bigg( \sum_{k\in \Ga^*} \widehat\Phi(\xi+\omega_j+k)\widehat \Phi^*(\xi+\omega_j+k) \bigg) (M_\Psi)^*(\xi+\omega_j)
 \\
 &= \vol(\Gamma) \sum_{j=1}^p M_\Psi(\xi+\omega_j) (M_\Psi)^*(\xi+\omega_j).
\end{align*}
In the above calculation we have used \eqref{L4}, $\Ga^*$-periodicity of $M_\Psi$, and \eqref{L5a}. Observe also that 
\begin{equation}\label{L8z}
p=\vol(\Gamma^*)/\vol(\Lambda^*)= \vol(\Lambda)/\vol(\Ga),
\end{equation}
Combining these results yields \eqref{L5}.

Given $\Ga$-SI space $V \subset \R^n$, we recall that the dimension function $\dim^\Ga_V: \R^n \to \N \cup \{0,\infty\}$ can be defined as the multiplicity function of the projection-valued measure coming from the representation of $\Ga$ on $V$ via translations by Stone's Theorem \cite{Ba1, BMM}. Alternatively, one can use Helson's characterization of shift-invariant spaces in terms of range functions. A version of this result for general lattices $\Gamma \subset \R^n$ can be found in the work of Rzeszotnik and the author \cite[Proposition 1.1]{BR2}. Hence, the dimension function can be defined as
\[
\dim^\Ga_V(\xi) = \dim \overline{\spa} \{ (\widehat \vp(\xi+k))_{k\in\Ga^*}: \vp \in \mathcal A \},
\]
where $\mathcal A \subset V$ is a countable set of generators of $V$, i.e., $V = \ov{\spa} E^\Ga(\mathcal A)$.

By the assumption $E^\Ga(\Phi)$ is an orthonormal basis of $\Ga$-SI space $V_0$. Hence, $\dim^\Ga_{V_0}(\xi) \equiv N$ a.e. $\xi$. Thus, by \cite[Lemma 2.5]{BH} 
\[
\dim_{V_0}^{\Lambda}(\xi) = \sum_{j=1}^{p} \dim_{V_0}^{\Ga}(\xi+\omega_j) = \sum_{j=1}^p N = Np \qquad\text{a.e. }\xi.
\]
Consequently, an orthonormal sequence $E^{\Lambda}(\Psi)$ is complete in $V_0$ $\iff$ the number of its generators $K=Nb$.
\end{proof}

The condition \eqref{L5} can be equivalently stated in terms of polyphase matrices defined below. A polyphase decomposition is a well-known technique in the theory of filter banks. A relative novelty of Lemma \ref{L6} is that we phrase it in the setting of two  lattices, one of which is a sublattice of the other. %For convenience we assume that the sparser lattice is the standard lattice $\Zn$, though it can be replaced by any other full rank lattice. 

\begin{lemma}\label{L6}
Let $\Lambda$ and $\Gamma$ be two full rank lattice such that $\Lambda \subset \Gamma$.
Let $\{d_1,\ldots,d_p\}$ be a transversal set of $\Ga/\Lambda$, where $p=|\Ga/\Lambda|=|\Lambda^*/\Ga^*|$.
Suppose that $M: \Rn \to M_{K\times N} (\C)$ is a measurable $\Ga^*$-periodic  function such that 
\[
\int_{\Rn/\Ga^*} ||M(\xi)||_F^2 d\xi<\infty.
\]
Then, there exists a unique collection of $\Lambda^*$-periodic functions $M^{\uparrow d_j}: \Rn \to M_{K\times N} (\C)$ such that 
\[
\int_{\Rn/\Lambda^*} ||M^{\uparrow d_j}(\xi)||_F^2 d\xi<\infty
\]
and
\begin{equation}\label{L7}
M(\xi) = \sum_{j=1}^p e^{2\pi i \lan d_j, \xi \rangle } M^{\uparrow d_j}(\xi) 
\qquad\text{a.e. }\xi.
\end{equation}
Moreover, we have
\begin{equation}\label{L8}
\frac{1}{p} \int_{\Rn/\Gamma^*} ||M(\xi)||_F^2 d\xi = \sum_{j=1}^p \int_{\Rn/\Lambda^*} ||M^{\uparrow d_j}(\xi)||_F^2 d\xi.
\end{equation}
\end{lemma}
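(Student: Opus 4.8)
The plan is to prove Lemma \ref{L6} by recognizing \eqref{L7} as a finite Fourier-type expansion (a polyphase decomposition) indexed by the transversal $\{d_1,\ldots,d_p\}$ of $\Ga/\Lambda$, and then to extract the components $M^{\uparrow d_j}$ by a discrete orthogonality relation among the characters $\xi \mapsto e^{2\pi i \lan d_j,\xi\ran}$. The key point is that although each such character is $\Lambda^*$-periodic (since $d_j \in \Ga \subset \Lambda^{**}$ means $\lan d_j, \ell \ran \in \Z$ for $\ell \in \Lambda^*$, wait—more precisely $d_j$ pairs integrally with $\Lambda^*$), the function $M$ is only $\Ga^*$-periodic, and the quotient $\Lambda^*/\Ga^*$ has order $p$. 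So the natural thing is to view $M$ as a function on the finite-to-one cover and decompose along the dual group.

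First I would fix the transversal $\{\omega_1,\ldots,\omega_p\}$ of $\Lambda^*/\Ga^*$ already used in Lemma \ref{L3}, and record the duality pairing: the finite abelian groups $\Ga/\Lambda$ and $\Lambda^*/\Ga^*$ are dual to each other under $(d,\omega)\mapsto e^{2\pi i \lan d,\omega\ran}$, so the character table is an invertible (indeed, after normalization, unitary) $p\times p$ matrix $\big(e^{2\pi i \lan d_j,\omega_\ell\ran}\big)_{j,\ell}$ satisfying the orthogonality relations $\sum_{\ell=1}^p e^{2\pi i \lan d_j - d_{j'},\omega_\ell\ran} = p\,\delta_{j,j'}$ and $\sum_{j=1}^p e^{2\pi i \lan d_j,\omega_\ell-\omega_{\ell'}\ran} = p\,\delta_{\ell,\ell'}$. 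Granting \eqref{L7}, I would evaluate $M$ at the shifted points $\xi + \omega_\ell$, using that $M^{\uparrow d_j}$ is $\Lambda^*$-periodic (so unchanged by the $\omega_\ell$ shift) while the character picks up the factor $e^{2\pi i \lan d_j,\omega_\ell\ran}$. This yields the linear system $M(\xi+\omega_\ell) = \sum_{j=1}^p e^{2\pi i \lan d_j,\xi\ran}\, e^{2\pi i \lan d_j,\omega_\ell\ran}\, M^{\uparrow d_j}(\xi)$, which I invert via the orthogonality relation to obtain the explicit formula
\begin{equation*}
M^{\uparrow d_j}(\xi) = \frac{1}{p}\, e^{-2\pi i \lan d_j,\xi\ran} \sum_{\ell=1}^p e^{-2\pi i \lan d_j,\omega_\ell\ran}\, M(\xi+\omega_\ell).
\end{equation*}
Taking this as the \emph{definition} of $M^{\uparrow d_j}$, I would then verify directly that it is $\Lambda^*$-periodic (the prefactor $e^{-2\pi i \lan d_j,\xi\ran}$ is $\Lambda^*$-periodic since $\lan d_j,\ell\ran \in \Z$ for $\ell\in\Lambda^*$; and shifting $\xi$ by $\ell\in\Lambda^*$ permutes the cosets $\omega_\ell$ while the characters absorb the resulting phases), that it is square-integrable over $\Rn/\Lambda^*$ (a finite sum of $\Ga^*$-translates of the integrable $M$), and that plugging these back into the right-hand side of \eqref{L7} recovers $M$, again using the orthogonality relations. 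Uniqueness follows because the same inversion shows any family satisfying \eqref{L7} must equal this formula.

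For the norm identity \eqref{L8}, I would expand $\|M(\xi)\|_F^2 = \tr\big(M(\xi)M(\xi)^*\big)$ using \eqref{L7}, integrate over a fundamental domain $\Rn/\Ga^*$, and exploit that the cross terms $e^{2\pi i \lan d_j - d_{j'},\xi\ran}$ with $j\ne j'$ integrate to zero against the $\Lambda^*$-periodic (hence, over the smaller fundamental domain, constant-in-the-relevant-sense) matrix products—this is the continuous Parseval/orthogonality of the characters $e^{2\pi i \lan d_j,\xi\ran}$ over $\Rn/\Ga^*$, valid precisely because distinct $d_j - d_{j'}$ give nontrivial characters of $\Ga/\Lambda$. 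Only the diagonal terms survive, contributing $\sum_j \int_{\Rn/\Ga^*} \|M^{\uparrow d_j}\|_F^2$, and since each $M^{\uparrow d_j}$ is $\Lambda^*$-periodic with $\Rn/\Ga^*$ tiling into $p$ copies of $\Rn/\Lambda^*$, I would rewrite $\int_{\Rn/\Ga^*} = p\int_{\Rn/\Lambda^*}$ to match the factor $1/p$ on the left; the counting $|\Ga/\Lambda| = |\Lambda^*/\Ga^*| = p$ and the volume relation $\vol(\Ga^*)/\vol(\Lambda^*)=p$ (as in \eqref{L8z}) make the bookkeeping consistent. The main obstacle I anticipate is purely bookkeeping rather than conceptual: keeping straight which objects are $\Ga^*$- versus $\Lambda^*$-periodic, correctly identifying the dual pairing between the two finite groups $\Ga/\Lambda$ and $\Lambda^*/\Ga^*$, and ensuring the normalizing factors of $p$ land consistently across the decomposition \eqref{L7}, the inversion, and the norm identity \eqref{L8}.
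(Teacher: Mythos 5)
Your proposal is correct in outline but follows a genuinely different route from the paper. The paper expands $M$ in a Fourier series over the lattice $\Ga$ (Parseval on $\Rn/\Ga^*$), splits $\Ga$ into the disjoint cosets $d_j+\Lambda$, and defines $M^{\uparrow d_j}$ as the partial sum over the coset $d_j+\Lambda$; existence, uniqueness, integrability and the norm identity \eqref{L8} then all fall out of Parseval on the two tori together with $\vol(\Ga^*)=p\,\vol(\Lambda^*)$. You instead work on the frequency side: you sample \eqref{L7} at $\xi+\omega_\ell$ for a transversal $\{\omega_\ell\}$ of $\Lambda^*/\Ga^*$ and invert the resulting $p\times p$ character matrix, obtaining an explicit closed-form expression for $M^{\uparrow d_j}$ as a phase-weighted average of $\Ga^*$-translates of $M$. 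This is the standard dual derivation of polyphase components; it buys an explicit formula (which the paper never writes down, though it is implicit in the proof of Lemma \ref{L9}) at the cost of having to verify periodicity, integrability and the reconstruction identity by hand. Both arguments are sound and of comparable length.

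One sub-claim in your verification is wrong, although the statement it is meant to support is true. You assert that $\lan d_j,\ell\ran\in\Z$ for all $\ell\in\Lambda^*$ (equivalently, that the prefactor $e^{-2\pi i\lan d_j,\xi\ran}$ is by itself $\Lambda^*$-periodic, via ``$\Ga\subset\Lambda^{**}$''). Since $\Lambda^{**}=\Lambda$ and it is $\Lambda$ that sits inside $\Ga$, not the reverse, this fails whenever $p>1$: take $\Lambda=2\Z$, $\Ga=\Z$, $d_j=1$, $\ell=1/2\in\Lambda^*$, so $\lan d_j,\ell\ran=1/2$. Indeed, if every $d_j$ paired integrally with all of $\Lambda^*$, the duality pairing between $\Ga/\Lambda$ and $\Lambda^*/\Ga^*$ would be trivial. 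The correct argument is the one you half-state in the same parenthesis: for $\kappa\in\Lambda^*$ the shift $\xi\mapsto\xi+\kappa$ multiplies the prefactor by $e^{-2\pi i\lan d_j,\kappa\ran}$ and permutes the cosets $\omega_\ell+\Ga^*$, and one has $e^{-2\pi i\lan d_j,\kappa\ran}e^{-2\pi i\lan d_j,\omega_\ell\ran}=e^{-2\pi i\lan d_j,\omega_{\sigma(\ell)}\ran}$ because the leftover $\Ga^*$-part is killed by $d_j\in\Ga$; thus only the product of the two factors is $\Lambda^*$-periodic, not each separately. With that one-line repair the rest of your argument, including the vanishing of the cross terms in the proof of \eqref{L8}, goes through.
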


\begin{proof}
By Parseval's Theorem we can expand each entry of a matrix-valued function $M$ in terms of Fourier series
\[
M(\xi) = \sum_{\ga \in \Ga} a(\ga) e^{2\pi i \lan \ga, \xi \ran},
\qquad\text{where }a(\ga) = 
\frac{1}{\vol(\Ga^*)}
\int_{\Rn/\Ga^*} M(\xi) e^{-2\pi i \lan \ga, \xi \ran} d\xi \in M_{K\times N} (\C).
\]
Moreover,
\begin{equation}\label{L8a}
\frac{1}{\vol(\Ga^*)} \int_{\Rn/\Ga^*} ||M(\xi)||_F^2 d\xi = \sum_{\ga \in \Ga} ||a(\gamma)||^2_F.
\end{equation}
Since every $\ga \in \Ga$ can be uniquely written as $\ga = d_j+k$ for some $j=1,\ldots,p$ and $k\in\Lambda$, we have
\[
M(\xi)= \sum_{j=1}^p e^{2\pi i \lan d_j, \xi \ran} \bigg(\sum_{k\in \Lambda} a(d_j+k) e^{2\pi i \lan k, \xi \ran}\bigg).
\]
The Fourier series in the parenthesis defines $K \times N$ matrix valued and $\Lambda^*$-periodic function, which we denote by $M^{\uparrow d_j}$, satisfying \eqref{L7}. By Parseval's  Theorem
\begin{equation}\label{L8b}
\frac{1}{\vol(\Lambda^*)} \int_{\Rn/\Lambda^*} ||M^{\uparrow d_j}(\xi)||_F^2 d\xi = \sum_{k\in \Lambda} ||a(d_j + k)||^2_F.
\end{equation}
 Reversing the steps of this argument shows the uniqueness of functions $M^{\uparrow d_j}$. Combining \eqref{L8z}, \eqref{L8a}, and \eqref{L8b} yields \eqref{L8}.
\end{proof}

Lemma \ref{L9} is a variant of Lemma \ref{L3} phrased in terms of polyphase matrices. Again, it is worth emphasizing that we do not require any refinability conditions at this stage, yet.

\begin{lemma}\label{L9}
Under the assumptions of Lemma \ref{L3}, let $M_\Psi^{\uparrow d_j}$ be the polyphase matrix functions corresponding to the function $M_\Psi$ as in \eqref{L4}. Then, $E^{\Lambda}(\Psi)$ is an orthonormal basis of $V_0$ $\iff$ $K=N p$ and 
\begin{equation}\label{L10}
\sum_{j=1}^p M_\Psi^{\uparrow d_j}(\xi) (M_\Psi^{\uparrow d_j})^*(\xi)= \mathbf I_K
\qquad\text{for a.e. }\xi \in \R^n.
\end{equation}
\end{lemma}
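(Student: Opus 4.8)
The plan is to reduce everything to Lemma \ref{L3}, which already characterizes $E^{\Lambda}(\Psi)$ being an orthonormal basis of $V_0$ by the two conditions $K=Np$ and \eqref{L5}, and then to re-express \eqref{L5} in polyphase form. Thus the real content is the pointwise identity
\[
\sum_{l=1}^p M_\Psi(\xi+\omega_l) (M_\Psi)^*(\xi+\omega_l) = p\sum_{j=1}^p M_\Psi^{\uparrow d_j}(\xi) (M_\Psi^{\uparrow d_j})^*(\xi) \qquad\text{for a.e. }\xi,
\]
after which \eqref{L5} reads $p\sum_j M_\Psi^{\uparrow d_j}(M_\Psi^{\uparrow d_j})^* = p\,\mathbf I_K$, i.e.\ exactly \eqref{L10}.

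First I would substitute the polyphase decomposition \eqref{L7} into each modulated fiber $M_\Psi(\xi+\omega_l)$. Choosing the representatives $\omega_l$ inside $\Lambda^*$ and using the $\Lambda^*$-periodicity of each $M_\Psi^{\uparrow d_j}$ guaranteed by Lemma \ref{L6}, one obtains
\[
M_\Psi(\xi+\omega_l) = \sum_{j=1}^p e^{2\pi i \lan d_j, \omega_l \ran}\, e^{2\pi i \lan d_j, \xi \ran} M_\Psi^{\uparrow d_j}(\xi).
\]
Setting $a_j(\xi) = e^{2\pi i \lan d_j, \xi \ran} M_\Psi^{\uparrow d_j}(\xi)$ and $W_{l,j} = e^{2\pi i \lan d_j, \omega_l \ran}$, the vector of modulated fibers $(M_\Psi(\xi+\omega_l))_l$ is the image of $(a_j(\xi))_j$ under the constant $p\times p$ matrix $W=(W_{l,j})$.

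The key step is to prove that $W^*W = p\,\mathbf I_p$. This is character orthogonality for the perfect pairing between the dual finite abelian groups $\Ga/\Lambda$ and $\Lambda^*/\Ga^*$, both of order $p$ by \eqref{L8z}. The map $(d+\Lambda,\omega+\Ga^*)\mapsto e^{2\pi i \lan d,\omega\ran}$ is well defined: if $d$ changes by $k\in\Lambda$ then $\lan k,\omega\ran\in\Z$ since $\omega\in\Lambda^*$, and if $\omega$ changes by $m\in\Ga^*$ then $\lan d,m\ran\in\Z$ since $d\in\Ga$. It is also nondegenerate, so for $j\ne j'$ the character $\omega\mapsto e^{2\pi i\lan d_j-d_{j'},\omega\ran}$ on $\Lambda^*/\Ga^*$ is nontrivial and sums to zero over the transversal, while for $j=j'$ the sum equals $p$; hence $\sum_l W_{l,j}\overline{W_{l,j'}}=p\,\delta_{j,j'}$. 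Expanding the left-hand side then gives
\[
\sum_{l=1}^p M_\Psi(\xi+\omega_l)(M_\Psi)^*(\xi+\omega_l) = \sum_{j,j'} \Big(\sum_{l=1}^p W_{l,j}\overline{W_{l,j'}}\Big) a_j(\xi) a_{j'}(\xi)^* = p\sum_{j=1}^p a_j(\xi)a_j(\xi)^*,
\]
and the unimodular factors cancel in $a_j a_j^* = M_\Psi^{\uparrow d_j}(M_\Psi^{\uparrow d_j})^*$, yielding the desired identity and hence the equivalence of \eqref{L5} and \eqref{L10}.

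I expect the only genuine subtlety to be the duality bookkeeping that makes $W$ a scaled unitary — namely checking that the pairing between $\Ga/\Lambda$ and $\Lambda^*/\Ga^*$ is well defined and nondegenerate, which is where the hypothesis $\Lambda\subset\Ga$ is used. The remaining manipulations of the fibers and the appeal to Lemma \ref{L3} are routine.
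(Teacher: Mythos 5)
Your proposal is correct and follows essentially the same route as the paper: reduce to Lemma \ref{L3}, substitute the polyphase decomposition \eqref{L7} into the modulated fibers using the $\Lambda^*$-periodicity of the $M_\Psi^{\uparrow d_j}$, and collapse the double sum via the orthogonality relation $\sum_{l} e^{2\pi i \langle d_j - d_{j'}, \omega_l\rangle} = p\,\delta_{j,j'}$ coming from the duality of $\Gamma/\Lambda$ and $\Lambda^*/\Gamma^*$. The only cosmetic difference is that you prove this character-orthogonality identity inline (packaged as $W^*W = p\,\mathbf I_p$), whereas the paper cites it from \cite[Lemma 3.6]{BL}.
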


\begin{proof}
For simplicity of notation we shall drop the dependence on $\Psi$ for functions $M_\Psi$ and $M_\Psi^{\uparrow d_j}$.
The lemma is a consequence of the following calculation that uses the notation from Lemma \ref{L6}
\begin{align*}
\sum_{j=1}^p M(\xi+\omega_j) M^*(\xi+\omega_j)
& =  \sum_{j_0=1}^p \sum_{j_1=1}^p \bigg( \sum_{j=1}^p e^{2\pi i \lan d_{j_0}-d_{j_1}, \xi+\omega_j \ran } \bigg) M^{\uparrow d_{j_0}}(\xi) (M^{\uparrow d_{j_1}})^*(\xi)
\\
& = p \sum_{j=1}^p M^{\uparrow d_{j}}(\xi) (M^{\uparrow d_{j}})^*(\xi).
\end{align*}
Here, we have used the identity
\[
\sum_{j=1}^p e^{2\pi i \lan d_{j_0}-d_{j_1}, \omega_j \ran } = p \delta_{j_0,j_1}.
\]
This is a consequence of the duality of quotient groups $\Ga/\Lambda$ and $\Lambda^*/\Ga^*$, see \cite[Lemma 3.6]{BL}.
Consequently, \eqref{L5} is equivalent to \eqref{L10}, which completes the proof of the lemma.
\end{proof}

Our next goal is to formulate main result of this section, which is a criterion on the existence of orthogonal wavelet associated with rationally dilated MRA. At this moment we make the {\bf standing assumptions} that was motivated in the beginning of this section:
\begin{itemize}\label{stand}
\item $A$ is an $n\times n$ expansive rational dilation and  $\Ga= A\Zn+\Zn$,
\item $\{ V_j \}_{j\in \Z}$ is a $(A,\Gamma)$-multiresolution analysis of multiplicity $N\in \N$, and  
\item the scaling vector function $\Phi=\{\vp^1, \ldots, \vp^N \} \subset V_0$ generates an orthonormal basis $E^\Gamma(\Phi)=\{T_\gamma \vp: \gamma \in \Gamma, \ \vp \in \Phi\}$ of $V_0$.
\end{itemize}

As a consequence of Lemma \ref{L1} we have the following result.

\begin{lemma}\label{v0}
A function $f\in L^2(\Rn)$ belongs to $V_1=D_A(V_0)$ if and only if
\begin{equation}\label{v1}
b^{1/2} \widehat f(B\xi)= M_f(\xi) \widehat \Phi(\xi)
\qquad\text{for a.e. }\xi \in \R^n,
\end{equation}
for some $\Ga^*$-periodic measurable function with values in $M_{1\times N}(\C)$, where $B=A^\top$ and $b:=|\det A|=|\det B|$.
\end{lemma}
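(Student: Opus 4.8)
The plan is to reduce Lemma \ref{v0} directly to Lemma \ref{L1} by means of the dilation operator $D_A$ and the commutation relation between $D_A$ and the Fourier transform. Recall that $f \in V_1 = D_A(V_0)$ if and only if $D_A^{-1} f = D_{A^{-1}} f \in V_0$. The natural strategy is to set $g = D_{A^{-1}} f$, apply the characterization of $V_0$ from Lemma \ref{L1} to $g$, and then translate the resulting condition on $\widehat g$ back into a condition on $\widehat f$.

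First I would record the action of $D_A$ on the Fourier transform side. Since $D_A f(x) = b^{1/2} f(Ax)$ with $b = |\det A|$, a change of variables gives $\widehat{D_A f}(\xi) = b^{-1/2} \widehat f(B^{-1}\xi)$, where $B = A^\top$; equivalently $\widehat{D_{A^{-1}} f}(\xi) = b^{1/2} \widehat f(B\xi)$. Thus, writing $g = D_{A^{-1}} f$, we have $\widehat g(\xi) = b^{1/2}\widehat f(B\xi)$. Applying Lemma \ref{L1} to $g$ (with the scaling vector $\Phi$ and lattice $\Ga$), membership $g \in V_0$ is equivalent to the existence of a measurable $\Ga^*$-periodic $M_{1\times N}(\C)$-valued function $M_g$ with $\widehat g(\xi) = M_g(\xi)\widehat\Phi(\xi)$ a.e. Substituting $\widehat g(\xi) = b^{1/2}\widehat f(B\xi)$ and renaming $M_f := M_g$ yields precisely \eqref{v1}.

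The remaining point to verify is that $f \in V_1 \iff g = D_{A^{-1}}f \in V_0$, which is immediate since $D_A$ is a unitary bijection of $L^2(\Rn)$ and $V_1 = D_A(V_0)$ by the MRA property (M4) of the standing assumptions. I would spell out that $D_{A^{-1}} = D_A^{-1}$ so that $f \in D_A(V_0)$ is the same as $D_{A^{-1}} f \in V_0$. No stability or refinability hypothesis on $V_0$ is needed here, only the orthonormal-basis property of $E^\Ga(\Phi)$ recorded in the standing assumptions, which is exactly what Lemma \ref{L1} requires.

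I do not anticipate a genuine obstacle in this lemma; it is essentially a bookkeeping translation of Lemma \ref{L1} through the dilation. The only mild care needed is to get the normalization constants right: one must confirm that the factor $b^{1/2}$ in \eqref{v1} matches the Jacobian produced by the change of variables in $\widehat{D_{A^{-1}}f}$, and that $\Ga^*$-periodicity of $M_f$ is inherited directly from the $\Ga^*$-periodicity of $M_g$ supplied by Lemma \ref{L1} (since the renaming introduces no dependence on $\xi$ beyond composition with the linear map $B$, which I avoid by keeping the variable as $\xi$ on both sides). Verifying these constants and the periodicity is the extent of the work.
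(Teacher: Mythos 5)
Your proposal is correct and follows exactly the paper's own argument: reduce to Lemma \ref{L1} via the equivalence $f\in D_A(V_0)\iff D_{A^{-1}}f\in V_0$ and the identity $\widehat{(D_{A^{-1}}f)}(\xi)=b^{1/2}\widehat f(B\xi)$. The normalization $b^{1/2}$ and the $\Ga^*$-periodicity carry over just as you describe, so there is nothing to add.
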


\begin{proof}
Note that  a function $f\in L^2(\Rn)$ belongs to $V_1=D_A(V_0)$ iff $D_{A^{-1}} f \in V_0$. Note that $\widehat{(D_{A^{-1}} f)}(\xi)= |\det B|^{1/2} \widehat f(B\xi) = D_B \widehat f(\xi)$. Hence, by Lemma \ref{L1} there exists a measurable $\Ga^*$-periodic function $M_f: \Rn \to M_{1\times N}(\C)$ such that \eqref{v1} holds.
\end{proof}

To formulate our result, we need to set up some necessary notation. Let $\{k_1,\ldots,k_q\}$ be a transversal set of the quotient group $\Ga/\Zn$ and $q:=|\Ga/\Zn|$. Define $\tilde \Phi$ to be the column vector consisting of functions
\begin{equation}\label{p0}
\tilde \Phi = \{ T_{k_j} \vp^l: j=1,\ldots,q, \ l=1,\ldots, N \}.
\end{equation}
By Lemma \ref{v0}, there exists a measurable $qN \times N$ matrix-valued function $M$ on $\Rn$ such that $M$ is $\Ga^*$-periodic and  
\begin{equation}\label{p1}
b^{1/2}\widehat {\tilde \Phi}(B\xi) = M(\xi) \widehat \Phi(\xi)  \qquad\text{a.e. }\xi.
\end{equation}
We shall refer to $M$ as the low-pass filter corresponding to the scaling vector function $\Phi$. Indeed, in the case when the dilation $A$ has integer entries, the function $M$ has values in $N\times N$ matrices and satisfies a familiar refinability equation
\[
b^{1/2} \widehat \Phi(B\xi) = M(\xi) \widehat \Phi(\xi) \qquad\text{a.e. }\xi.
\]

Let $\Psi=\{\psi^1,\ldots,\psi^L\}$ be any finite collection of functions  in $V_1$. By Lemma \ref{v0}, there exists $L \times N$ matrix-valued function $H$ on $\Rn$ such that $H$ is $\Ga^*$-periodic and
\begin{equation}\label{p2}
b^{1/2} \widehat {\Psi}(B\xi) = H(\xi) \widehat \Phi(\xi)  \qquad\text{a.e. }\xi.
\end{equation}
We shall refer to $H$ as the high-pass filter corresponding to the vector function $\Psi$. 

Define the lattice $\Lambda:=A\Z^n$. Let $\{d_1,\ldots,d_p\}$ be a transversal set of the quotient group $\Ga/\Lambda$ and $p:=|\Ga/\Lambda|$. 
Let $M^{\uparrow d_j}$, $j=1,\ldots, p$, be the polyphase $qN \times N$ matrix functions corresponding to the low-pass filter $M$, which are given by Lemma \ref{L6}. Likewise, let $H^{\uparrow d_j}$, $j=1,\ldots, p$, be the polyphase $L \times N$ matrix functions corresponding to the high-pass filter $H$.

\begin{theorem}\label{cri}
Let $A$ be an expansive rational dilation and let $\Ga= A\Zn+\Zn$. Suppose that there exists $\Phi=\{\vp^1, \ldots, \vp^N \} \subset V_0$ such that 
$E^\Gamma(\Phi)$ is an orthonormal basis of $V_0$. In addition, suppose that $V_0$ is a refinable space with respect to $A$, i.e., $V_0 \subset V_1:=D_A(V_0)$. Let $\Psi=\{\psi^1,\ldots, \psi^L\}$ be a finite collection in $V_1$. Let $M$ and $H$ be low-pass and high-pass filters as in \eqref{p1} and \eqref{p2}.

Then,  $E^{\Zn}(\Psi)$ is an orthonormal basis of $V_1 \ominus V_0$ if and only if
\[
L=N(|\Ga/(A\Zn)|-|\Ga/\Zn|) = N(p-q),
\]
and 
\begin{equation}\label{cri3}
\sum_{j=1}^p 
\begin{bmatrix} M^{\uparrow d_j}(\xi) \\ H^{\uparrow d_j}(\xi) \end{bmatrix}
\begin{bmatrix} M^{\uparrow d_j}(\xi) \\ H^{\uparrow d_j}(\xi) \end{bmatrix}^*
%\begin{bmatrix} (M^{\uparrow d_j})^*(\xi)  & (H^{\uparrow d_j})^*(\xi) \end{bmatrix}
= \mathbf I_{pN}
\qquad\text{for a.e. }\xi \in \R^n.
\end{equation}
\end{theorem}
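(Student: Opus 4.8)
The plan is to reduce the statement, via a Hilbert-space orthogonal-complement argument, to a single completeness statement about $V_1$, and then to apply Lemma \ref{L9} after transporting the problem by the dilation $D_A$.

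First I would record that, writing $\{k_1,\ldots,k_q\}$ for the transversal of $\Ga/\Zn$ used in \eqref{p0}, the set $\{k+k_j:k\in\Zn,\ j=1,\ldots,q\}$ enumerates $\Ga$, so $E^{\Zn}(\tilde\Phi)=E^\Ga(\Phi)$ and hence $E^{\Zn}(\tilde\Phi)$ is an orthonormal basis of $V_0$. Since $V_0\subset V_1$ and $\tilde\Phi\subset V_0$, $\Psi\subset V_1$, the collection $\Theta:=\tilde\Phi\cup\Psi$ lies in $V_1$. I claim
\[
E^{\Zn}(\Psi)\text{ is an orthonormal basis of }V_1\ominus V_0 \iff E^{\Zn}(\Theta)\text{ is an orthonormal basis of }V_1 .
\]
The forward direction is immediate, as $E^{\Zn}(\tilde\Phi)$ and $E^{\Zn}(\Psi)$ are orthonormal bases of the orthogonal summands $V_0$ and $V_1\ominus V_0$. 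For the converse, orthonormality of $E^{\Zn}(\Theta)$ forces $\Psi\perp V_0$ and $E^{\Zn}(\Psi)$ orthonormal, while completeness in $V_1=V_0\oplus(V_1\ominus V_0)$ together with $\ov{\spa}\,E^{\Zn}(\Psi)\perp V_0$ pins down $\ov{\spa}\,E^{\Zn}(\Psi)=V_1\ominus V_0$. This reduction needs no counting and no refinement identities.

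Next I would realize $V_1$ as a shift-invariant space to which Lemma \ref{L9} applies. Applying $D_A$ to the orthonormal basis $E^\Ga(\Phi)$ of $V_0$ and using $D_AT_\gamma=T_{A^{-1}\gamma}D_A$ shows that $E^{\Ga_1}(\Phi_1)$, with $\Ga_1:=A^{-1}\Ga=\Zn+A^{-1}\Zn$ and $\Phi_1:=D_A\Phi$, is an orthonormal basis of $V_1$; note $\Zn\subset\Ga_1$. From \eqref{p1}, \eqref{p2} and the identity $\widehat\Phi(\xi)=b^{1/2}\widehat{\Phi_1}(B\xi)$ (a consequence of $\widehat{\Phi_1}=\widehat{D_A\Phi}$) one obtains, after the substitution $\eta=B\xi$,
\[
\widehat\Theta(\eta)=M_\Theta(\eta)\,\widehat{\Phi_1}(\eta),\qquad M_\Theta(\eta):=\begin{bmatrix}M\\ H\end{bmatrix}(B^{-1}\eta).
\]
Since $\Ga_1^*=B\Ga^*$ and $\begin{bmatrix}M\\ H\end{bmatrix}$ is $\Ga^*$-periodic, $M_\Theta$ is $\Ga_1^*$-periodic, so Lemma \ref{L9} applies to the lattice pair $\Zn\subset\Ga_1$ with generator vector $\Phi_1$ and $K:=qN+L$ functions $\Theta$. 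The index $|\Ga_1/\Zn|$ equals $|\Ga/A\Zn|=p$, because $A^{-1}$ carries $\Ga/A\Zn$ isomorphically onto $\Ga_1/\Zn$; hence the cardinality condition $qN+L=Np$ of Lemma \ref{L9} is exactly $L=N(p-q)$.

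Finally I would match the polyphase conditions. Taking $\{A^{-1}d_1,\ldots,A^{-1}d_p\}$ as a transversal of $\Ga_1/\Zn$, and using $\langle d_j,B^{-1}\eta\rangle=\langle A^{-1}d_j,\eta\rangle$ (as $B^\top=A$) together with $(A\Zn)^*=B^{-1}\Zn$, I would verify that each $\eta\mapsto\begin{bmatrix}M^{\uparrow d_j}\\ H^{\uparrow d_j}\end{bmatrix}(B^{-1}\eta)$ is $\Zn$-periodic and that
\[
M_\Theta(\eta)=\sum_{j=1}^p e^{2\pi i\langle A^{-1}d_j,\eta\rangle}\begin{bmatrix}M^{\uparrow d_j}\\ H^{\uparrow d_j}\end{bmatrix}(B^{-1}\eta);
\]
by the uniqueness in Lemma \ref{L6} these are precisely the polyphase components $M_\Theta^{\uparrow A^{-1}d_j}$. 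Then the Lemma \ref{L9} identity $\sum_j M_\Theta^{\uparrow A^{-1}d_j}(M_\Theta^{\uparrow A^{-1}d_j})^*=\mathbf I_{pN}$ becomes \eqref{cri3} after the substitution $\xi=B^{-1}\eta$. I expect this last bookkeeping step --- keeping the four lattices $\Zn,\Ga,A\Zn,\Ga_1$ and their duals straight under $A^{-1}$ and $B^{-1}$, and checking the periodicities so that uniqueness of the polyphase decomposition may be invoked --- to be the main obstacle; everything else is formal.
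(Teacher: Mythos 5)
Your proposal is correct and follows essentially the same route as the paper: both reduce the claim to $E^{\Zn}(\tilde\Phi\cup\Psi)$ being an orthonormal basis of $V_1$ and then invoke the polyphase criterion of Lemma \ref{L9} for an index-$p$ pair of lattices, with the counting $qN+L=pN$. The only difference is cosmetic: the paper transports the system back to $V_0$ via $D_{A^{-1}}$ and applies Lemma \ref{L3}/\ref{L9} to the pair $A\Zn\subset\Ga$ with the filters $M,H$ as given, whereas you transport the basis of $V_0$ forward to $V_1$ via $D_A$ and apply Lemma \ref{L9} to the pair $\Zn\subset A^{-1}\Ga$ with the conjugated filter $T(B^{-1}\eta)$ — unitarily equivalent bookkeeping, and your change-of-variables identification of the polyphase components via the uniqueness in Lemma \ref{L6} is carried out correctly.
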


Observe that the condition \eqref{cri3} equivalent to the  $(qN +L) \times pN$ matrix
\begin{equation}\label{cri4}
\begin{bmatrix} 
M^{\uparrow d_1}(\xi) & M^{\uparrow d_2}(\xi) & \ldots & M^{\uparrow d_p}(\xi)
\\
H^{\uparrow d_1}(\xi) & H^{\uparrow d_2}(\xi) & \ldots &H^{\uparrow d_p}(\xi)
\end{bmatrix}.
\end{equation}
being unitary for a.e. $\xi$.

\begin{proof}[Proof of Theorem \ref{cri}]

Define $(L+qN)\times N$ matrix-valued function $T$ by letting
\[
T(\xi) = \begin{bmatrix} M(\xi) \\ H(\xi) \end{bmatrix} \qquad \xi \in \R^n.
\]
Then, equations \eqref{p1} and \eqref{p2} can be combined as
\[
\begin{bmatrix} b^{1/2} \widehat {\tilde \Phi}(B\xi) \\  b^{1/2} \widehat \Psi(B\xi) \end{bmatrix} = \begin{bmatrix} M(\xi) \\ H(\xi) \end{bmatrix} \widehat \Phi(\xi).
\]
Observe that the following statements are equivalent:
\begin{enumerate}[(i)]
\item the system $E^{\Zn}(\Psi)$ is an orthonormal basis of $V_1 \ominus V_0$,
\item the system  $E^{\Zn}(\tilde \Phi \cup \Psi)$ is an orthonormal basis of $V_1$,
\item $E^{A\Z^n}(D_{A^{-1}}\tilde \Phi \cup D_{A^{-1}}\Psi)$ being an orthonormal basis of $V_0 = D_{A^{-1}}(V_1)$.
\end{enumerate}
The last equivalence follows by applying the dilation operator $D_{A^{-1}}$, which is unitary.

Let  $\{\omega_1,\ldots,\omega_p\}$ be a transversal set of $\Lambda^*/\Ga^*$, where $\Lambda = A\Z^n$ and $\Ga= A\Zn+\Zn$. Applying Lemma \ref{L4} to the collection $D_{A^{-1}}\tilde \Phi \cup D_{A^{-1}}\Psi \subset V_0$ of $qN+L$ functions, shows that the above statements are equivalent with $qN+L=pN$ and
\[
\sum_{j=1}^p 
\begin{bmatrix} M(\xi+\omega_j) \\ H(\xi+\omega_j) \end{bmatrix}
\begin{bmatrix} M(\xi+\omega_j) \\ H(\xi+\omega_j) \end{bmatrix}^*
= p \mathbf I_{pN}
\qquad\text{for a.e. }\xi \in \R^n.
\]
In other words, the following block matrix is square and unitary for a.e. $\xi$,
\[
\frac{1}{\sqrt p}
\begin{bmatrix} 
M(\xi+\omega_1) & M(\xi+\omega_2) & \ldots & M(\xi+\omega_p)
\\
H(\xi+\omega_1) & H(\xi+\omega_2) & \ldots & H(\xi+\omega_p)
\end{bmatrix}.
\]
Applying Lemma \ref{L9} to the matrix-valued function $T$ completes the proof of Theorem \ref{cri}.
\end{proof}

The following corollary summarizes the construction procedure of orthonormal wavelets associated with rationally dilated MRA.

\begin{corollary}\label{LH}
Suppose that $\Phi=\{\vp^1, \ldots, \vp^N \}$ is a scaling vector function of $(A,\Gamma)$-MRA, which satisfies the standing assumptions. Let $M$ be the low-pass filter of $\Phi$ satisfying \eqref{p1} and let $M^{\uparrow d_i}$, $i=1,\ldots,p$, be the corresponding polyphase matrices as in \eqref{L7}.

Suppose that $qN \times pN$ matrix function $\begin{bmatrix} 
M^{\uparrow d_1}(\xi) & M^{\uparrow d_2}(\xi) & \ldots & M^{\uparrow d_p}(\xi) \end{bmatrix}$ 
can be appended  by $(p-q)N \times pN$ matrix function
$\begin{bmatrix} 
H^{\uparrow d_1}(\xi) & H^{\uparrow d_2}(\xi) & \ldots &H^{\uparrow d_p}(\xi)
\end{bmatrix}$,
which is measurable and $\Z^n$-periodic, such that the resulting matrix \eqref{cri4} is unitary for a.e. $\xi$. Let $H$ be $(p-q)N \times N$ matrix function, which is $\Ga^*$-periodic, such that $H^{\uparrow d_i}$'s are its polyphase decomposition:
\begin{equation}\label{H7}
H(\xi) = \sum_{j=1}^p e^{2\pi i \lan d_j, \xi \rangle } H^{\uparrow d_j}(\xi) 
\qquad\text{a.e. }\xi.
\end{equation}
Define $\Psi=\{\psi^1,\ldots, \psi^{(p-q)N}\} \subset L^2(\R^n)$ such that \eqref{p2} holds, i.e., $H$ is a high-pass filter of $\Psi$. Then, $\Psi$ is an orthonormal wavelet with respect to the dilation $A$ and the lattice $\Z^n$.
\end{corollary}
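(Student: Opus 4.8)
The plan is to recognize that this corollary is a repackaging of Theorem \ref{cri} together with the standard passage from a single wavelet space $W_0 := V_1 \ominus V_0$ to an orthonormal wavelet of all of $L^2(\Rn)$ via the multiresolution structure. Accordingly, I would split the argument into two stages: first, verify that the collection $\Psi$ built from the prescribed high-pass polyphase blocks satisfies the hypotheses of Theorem \ref{cri}, and conclude that $E^{\Zn}(\Psi)$ is an orthonormal basis of $W_0$; second, lift this to the statement that the full affine system \eqref{affine} is an orthonormal basis of $L^2(\Rn)$.

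For the first stage, I would begin by checking that the function $H$ defined by the synthesis formula \eqref{H7} is well-defined and $\Ga^*$-periodic, and that its polyphase matrices, in the sense of Lemma \ref{L6} relative to the pair $\Lambda = A\Zn \subset \Ga$, are exactly the prescribed blocks $H^{\uparrow d_j}$. The periodicity bookkeeping here is the point that must be handled with care: one uses that the coset representatives $d_j$ of $\Ga/\Lambda$ may be chosen inside $\Zn$, that $\Ga^* \subset \Zn$, and the uniqueness clause of Lemma \ref{L6} to identify the $H^{\uparrow d_j}$ as the genuine polyphase components of the synthesized $H$. Since the matrix \eqref{cri4} is assumed unitary a.e., its entries are essentially bounded, hence so is $H$; therefore the functions $\psi^i$ defined through \eqref{p2} lie in $L^2(\Rn)$, and by Lemma \ref{v0} they lie in $V_1$. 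Thus $\Psi = \{\psi^1,\ldots,\psi^{(p-q)N}\} \subset V_1$ consists of $L = N(p-q)$ functions with high-pass filter $H$. Having arranged this, the unitarity of \eqref{cri4} is precisely condition \eqref{cri3}, so all hypotheses of Theorem \ref{cri} are in force, and Theorem \ref{cri} yields that $E^{\Zn}(\Psi)$ is an orthonormal basis of $W_0 = V_1 \ominus V_0$.

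For the second stage, set $W_j := D_{A^j}(W_0)$. Using property (M4) and the unitarity of $D_{A^j}$, one has $W_j = V_{j+1} \ominus V_j$, and $D_{A^j}$ carries the orthonormal basis $E^{\Zn}(\Psi)$ of $W_0$ onto the orthonormal basis $\{D_{A^j} T_k \psi^l : k \in \Zn,\ l=1,\ldots,(p-q)N\}$ of $W_j$; these are exactly the scale-$j$ elements of the affine system \eqref{affine} for the lattice $\Zn$. Finally, the MRA axioms (M1)--(M3) give the orthogonal decomposition $L^2(\Rn) = \bigoplus_{j\in\Z} W_j$ by the usual telescoping of $V_{j+1} = V_j \oplus W_j$ together with $\bigcap_{j} V_j = \{0\}$ and $\ov{\bigcup_{j} V_j} = L^2(\Rn)$. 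Collecting the bases of the $W_j$ shows that the affine system generated by $\Psi$ with dilation $A$ and lattice $\Zn$ is an orthonormal basis of $L^2(\Rn)$, i.e.\ $\Psi$ is an orthonormal wavelet.

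I expect the main obstacle to be the first stage, and within it the correct identification of the $H^{\uparrow d_j}$ as the polyphase matrices of the synthesized filter $H$: one must confirm that the periodicities match so that Lemma \ref{L6} applies verbatim and that the resulting $\Psi$ genuinely lands in $V_1$ via Lemma \ref{v0}. Once the filter $H$ is correctly reconstructed, the appeal to Theorem \ref{cri} is immediate and the second stage is entirely routine multiresolution bookkeeping.
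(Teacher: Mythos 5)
Your proposal follows essentially the same route as the paper's proof: invoke Theorem \ref{cri} to conclude that $E^{\Zn}(\Psi)$ is an orthonormal basis of $V_1\ominus V_0$, then dilate by $D_{A^j}$ and use the MRA properties (M2)--(M3) to obtain the orthogonal decomposition $L^2(\Rn)=\bigoplus_{j\in\Z}W_j$ and hence the wavelet basis. The extra care you take in identifying the $H^{\uparrow d_j}$ as the polyphase components of the synthesized $H$ and in checking $\Psi\subset V_1$ is sound and only makes explicit what the paper leaves implicit.
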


\begin{proof}
Suppose that $\Phi=\{\vp^1, \ldots, \vp^N \}$ is a scaling vector function of $(A,\Gamma)$-MRA $\{V_j\}_{j\in \Z}$ of multiplicity $N$, which satisfies the standing assumptions. In particular, this means the assumptions of Theorem \ref{cri} are met. Let $L=(p-q)N$. Suppose that $L \times pN$ matrix function $\begin{bmatrix} 
H^{\uparrow d_1}(\xi) & H^{\uparrow d_2}(\xi) & \ldots &H^{\uparrow d_p}(\xi)
\end{bmatrix}$ is measurable, $\Z^n$-periodic, and satisfies \eqref{cri3}. Define the collection $\Psi=\{\psi^1,\ldots,\psi^L\}$ by 
\[
b^{1/2} \widehat {\Psi}(B\xi) = H(\xi) \widehat \Phi(\xi)  \qquad\text{a.e. }\xi.
\]
where $H$ is $\Gamma^*$-periodic $L\times N$ matrix function given by \eqref{H7}. By Theorem \ref{cri}, the shift invariant system $E^{\Zn}(\Psi)$ is an orthonormal basis of $V_1 \ominus V_0$. Applying dilation operator $D_A$ we deduce that $D_{A^j}(E^{\Zn}(\Psi))$ is an orthonormal basis of the space $W_j=V_{j+1} \ominus V_j$ for every $j\in \Z$. By properties (M2) and (M3) of an MRA we have an orthogonal decomposition
\[
\bigoplus_{j\in \Z} W_j = L^2(\R^n).
\]
Consequently, the wavelet system
\[
\{D_{A^j}T_k \psi^l: j\in \Z,\ k\in \Z^n,\ l=1,\ldots,L\}
\]
is an orthonormal basis of $L^2(\R^n)$.
\end{proof}

\section{Matrix completion and existence of Meyer wavelets} \label{S5}

In this section we show the existence of wavelets in the Schwartz class which are associated with an expansive dilation matrix $A$ with rational entries. It is well-known that for dilations with integer entries, the existence of nice wavelets associated with an MRA is connected with the matrix completion problem. A low-pass filter associated to a scaling function of an MRA defines the first row of a matrix function. To construct an orthonormal wavelet we need to find suitable high-pass filter, which corresponds to completing the first row to a unitary matrix function. 

The results of previous section show that the same scheme also works for dilations $A$ with rational entries, albeit instead of a single row, several orthogonal row functions are determined by a single scaling function. This scenario already happens for  rationally dilated MRA of multiplicity $1$; it is reminiscent of the construction of wavelets which are associated with integer dilations and MRAs of higher multiplicity, see \cite[Section 4.2]{CHM}. The goal is to extend the prescribed row functions in a such a way so that the resulting matrix function is unitary. While measurable matrix extension always exists, there are topological obstructions for the existence of continuous matrix extensions, see \cite{BJ, CMX, DiR, PR}.

A typical result for the existence of orthogonal wavelets requires the existence of an MRA with $r$-regular scaling function $\vp$.

\begin{definition}
We say that a function $f$ on $\R^n$ is $r$-regular, if $f$ is of class $C^r$, $r=0,1,\ldots,\infty$ and
$$| \partial^\al f(x)| \le c_{\al,k} (1+|x|)^{-k},$$
for each $k\in\N$, and each multi-index $\al$, with $|\al|\le r$. A wavelet family $\Psi=\{\psi^1,\ldots,\psi^L\}$ is $r$-regular, if $\psi^1, \ldots, \psi^L$ are $r$-regular functions. An $(A,\Gamma)$ MRA $\{V_j\}_{j\in \Z}$ is $r$-regular if there exist $r$- regular scaling functions $\Phi=\{\vp^1, \ldots, \vp^N \} $ such that 
$
E^\Gamma(\Phi):=\{T_\gamma \vp: \gamma \in \Gamma, \ \vp \in \Phi\}
$
is an orthonormal basis of $V_0$.
\end{definition}

We start by describing what is known for dilations $A$ with integer entries. The first result in this direction is due to Gr\"ochenig \cite{Gro0} who showed the existence of $r$-regular wavelet bases for dyadic dilation $A=2\mathbf I$ by constructing unitary matrix completion using Schmidt-Gram orthogonalization. An explicit construction of unitary matrix completion given the first row, which uses Householder transformations, was shown by Jia and Micchelli \cite[Proposition 2.1]{JM}, see also \cite[Theorem 5.1]{JS}. Consequently, we have the following existence result, which can be found in the book of Wojtaszczyk \cite[Theorem 5.15]{Woj}. 

\begin{theorem}\label{2p}
Let $A$ be an $n\times n$ expansive integer dilation such that $2b-1>n$, where $b=|\det A|$. Then, for any $r$-regular $(A,\Z^n)$-MRA of multiplicity $1$ with scaling function $\vp$, there exists an orthonormal wavelet $\Psi=\{\psi^1,\ldots, \psi^{b-1}\}$ consisting of $r$-regular functions.
\end{theorem}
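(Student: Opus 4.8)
The plan is to reduce the statement to a $C^r$ unitary matrix completion problem and then invoke the completion result of Jia and Micchelli. First I would specialize the framework of Section \ref{S4} to an integer dilation $A$. Since $A$ has integer entries, $A\Zn\subset\Zn$, so the lattice $\Ga=A\Zn+\Zn$ reduces to $\Zn$ and the standing assumptions of Corollary \ref{LH} are met with $q=|\Ga/\Zn|=1$ and $p=|\Ga/A\Zn|=|\Zn/A\Zn|=b$. Consequently $N=1$, the low-pass filter $M$ from \eqref{p1} is scalar-valued, and its polyphase components $M^{\uparrow d_1},\ldots,M^{\uparrow d_b}$ assemble into a single row $\mathbf m(\xi)=(M^{\uparrow d_1}(\xi),\ldots,M^{\uparrow d_b}(\xi))$. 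By Corollary \ref{LH}, to produce an orthonormal wavelet $\Psi=\{\psi^1,\ldots,\psi^{b-1}\}$ (note $L=(p-q)N=b-1$, matching the claim) it suffices to append $b-1$ rows $\mathbf h(\xi)=(H^{\uparrow d_1}(\xi),\ldots,H^{\uparrow d_b}(\xi))$, measurable and $\Zn$-periodic, so that the $b\times b$ matrix \eqref{cri4} is unitary a.e.; to obtain $r$-regularity I will arrange $\mathbf h$ to be of class $C^r$.

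The input to this completion is already well-behaved. Since $E^\Zn(\vp)$ is an orthonormal basis of $V_0$ and $\vp$ is refinable, the orthonormality characterization \eqref{L5a} (equivalently Lemma \ref{L9} applied to $\vp$) yields the quadrature-mirror relation $\sum_{j=1}^b|M^{\uparrow d_j}(\xi)|^2=1$, which is exactly the top-left $1\times1$ block of \eqref{cri3}; thus $\mathbf m(\xi)$ is a unit vector in $\C^b$ for a.e.\ $\xi$. Because $\vp$ is $r$-regular, $\widehat\vp$ is continuous and nonvanishing near the origin with $|\widehat\vp(0)|=1$, and the refinement identity $b^{1/2}\widehat\vp(B\xi)=M(\xi)\widehat\vp(\xi)$ shows, by the standard bootstrapping of regularity for $r$-regular MRAs, that $M$ is a $C^r$, $\Zn$-periodic function. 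Since each $M^{\uparrow d_j}$ is a finite discrete-Fourier combination of translates of $M$ across the cosets of $\Ga/\Lambda$ as in Lemma \ref{L6}, the row $\mathbf m$ is $C^r$; being $\Lambda^*=B^{-1}\Zn$-periodic with $\Zn\subset B^{-1}\Zn$, it descends to a $C^r$ map of the $n$-torus $\Rn/\Zn$ into the unit sphere $S^{2b-1}\subset\C^b$.

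At this point the problem is purely one of unitary matrix completion: extend a $C^r$ unit row field on an $n$-dimensional torus to a $C^r$ unitary $b\times b$ matrix field. This is exactly where the hypothesis $2b-1>n$ is indispensable, and it is the heart of the argument. Viewing $\mathbf m$ as a map into $S^{2b-1}$, the obstruction to a continuous choice of $b-1$ further orthonormal rows is homotopical in nature and vanishes once the base dimension $n$ is strictly below $2b-1$, since $S^{2b-1}$ is $(2b-2)$-connected; the explicit Householder-based construction of Jia and Micchelli \cite[Proposition 2.1]{JM} then produces the remaining rows while preserving $C^r$ regularity (see also \cite[Theorem 5.15]{Woj}). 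I expect this completion step, together with the verification that the dimension count $2b-1>n$ precisely removes the topological obstruction, to be the main difficulty; everything else is bookkeeping.

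Finally I would transfer regularity back to the wavelets. The completion gives a $C^r$, $\Zn$-periodic block $\mathbf h$, hence a $C^r$, $\Ga^*$-periodic high-pass filter $H$ via its polyphase expansion \eqref{H7}, and $\Psi$ is defined by $b^{1/2}\widehat\Psi(B\xi)=H(\xi)\widehat\vp(\xi)$ as in \eqref{p2}. By Corollary \ref{LH}, $\Psi$ is an orthonormal wavelet for $(A,\Zn)$. Each $\widehat\psi^l(\eta)=b^{-1/2}H_l(B^{-1}\eta)\widehat\vp(B^{-1}\eta)$ is a product of a bounded $C^r$ periodic multiplier and the rapidly decreasing smooth function $\widehat\vp$, so $\widehat\psi^l$ is $C^r$ with rapid decay; the standard Fourier correspondence between smoothness and decay then yields that each $\psi^l$ is $r$-regular, completing the proof.
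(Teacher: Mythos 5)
The paper does not actually prove Theorem \ref{2p}: it quotes it as a known result from Wojtaszczyk \cite[Theorem 5.15]{Woj}, and its own contribution is the generalization Theorem \ref{woj}, whose proof follows precisely the scheme you propose --- polyphase decomposition of the low-pass filter, unitary matrix completion (via Ashino--Kametani, Theorem \ref{AK}, whose hypothesis $n\le 2(m-d)$ with $m=b$, $d=1$ is exactly $2b-1>n$), and Corollary \ref{LH}. Your reduction, the identification $q=1$, $p=b$, $L=(p-q)N=b-1$, the quadrature-mirror relation for the polyphase row, and the role of the dimension condition in killing the topological obstruction are all correct and consonant with what the paper does.

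There is, however, a genuine gap in your final regularity transfer. You track only $C^r$ smoothness of the filters and then assert that $\widehat{\psi^l}=b^{-1/2}H_l(B^{-1}\cdot)\,\widehat\vp(B^{-1}\cdot)$ is ``$C^r$ with rapid decay,'' so that $\psi^l$ is $r$-regular. For finite $r$ both halves fail: it is $\vp$, not $\widehat\vp$, that decays rapidly, so $\widehat\vp$ is $C^\infty$ but decays only like $(1+|\xi|)^{-r}$, and hence $\widehat{\psi^l}$ does not decay rapidly; moreover a $C^r$ bound on $\widehat{\psi^l}$ cannot by itself yield the rapid decay of $\partial^\alpha\psi^l$ for $|\alpha|\le r$ that $r$-regularity requires. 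The repair is standard but must run through the time domain, and it requires the filters to be $C^\infty$, not merely $C^r$: because the $r$-regular $\vp$ decays faster than any polynomial, the Fourier coefficients of the low-pass filter decay rapidly (this is the estimate \eqref{woj5} in the paper's proof of Theorem \ref{woj}), so $M$ --- and therefore the completed high-pass filters $H_l$ obtained by Householder or by Theorem \ref{AK} --- are $C^\infty$ on the torus. Then $\psi^l$ is a series $\sum_{k}c^l_k\,\vp(A\cdot-k)$ with rapidly decaying coefficients, and \cite[Lemma 5.13]{Woj} gives $r$-regularity. The $C^\infty$-ness is essential: a merely $C^r$ periodic multiplier has Fourier coefficients decaying only polynomially, which would produce only polynomial decay of $\psi^l$ and hence would not establish $r$-regularity.
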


The condition  $2b-1 >n$ can be relaxed if either $b=|\det A|=2$ or the scaling function $\vp$ is such that $\widehat \vp$ is real-valued, see \cite[Corollary 5.17]{Woj}. However, Packer and Rieffel \cite{PR} have shown that for a general a continuous low-pass filter for dilation $A$ with $|\det A| > 2$, there are topological obstructions for construction of associated continuous high-pass filters. They gave an example  of a continuous low-pass filter for $5\times 5$ integer dilation matrix $A$ with $\det A=3$ for which it is impossible to construct a family of the corresponding continuous high-pass filters. Thus, the dimensional condition $2b-1 >n$ in Theorem \ref{2p} is needed starting with the dimension $n = 5$.

Several authors have also studied the existence of wavelets associated with an MRA of higher multiplicity.  The matrix completion problem for multivariate filter bank construction was considered in \cite{CMX}. For the construction of orthonormal wavelets from an MRA of higher multiplicity see \cite[Chapter 4]{CHM}. The extension of Theorem \ref{2p} for an MRA of higher multiplicity was shown by Cabrelli and Gordillo \cite[Theorem 4.4]{CG}.   Their result was possible due to the following theorem of Ashino and Kametani \cite[Theorem 1]{AK}, which extends Gr\"ochenig's construction of unitary matrix completion where multiple orthogonal rows are given.

\begin{theorem}\label{AK}
Let $X$ be a real, compact, $C^\infty$ manifold with $\dim X=n$. Let $m, n, d\in \N$ be such that $n \le 2(m-d)$. Then, for all $C^\infty$ mappings $f_l: X \to \C^m$, $l=1,\ldots,d $ such that
\[
\langle f_k(x), f_l(x) \rangle = \delta_{k,l} \qquad\text{for all }k,l=1,\ldots,d, \ x\in X,
\]
there exist $C^\infty$ mappings $f_l: X \to \C^m$, $l=d+1,\ldots,m $ such that
\[
\langle f_k(x), f_l(x) \rangle = \delta_{k,l} \qquad\text{for all }k,l=1,\ldots,m, \ x\in X.
\]
\end{theorem}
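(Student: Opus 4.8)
The plan is to recast the completion problem as a bundle-triviality question, solve it by a connectivity (obstruction-theoretic) argument, and then upgrade continuity to smoothness. First I would package the given data as a subbundle. The orthonormal sections $f_1,\dots,f_d$ span a smooth rank-$d$ subbundle $T$ of the trivial Hermitian bundle $X\times\C^m$, and $T$ is itself trivial with global orthonormal frame $f_1,\dots,f_d$. Let $E=T^\perp\subset X\times\C^m$ be its orthogonal complement, a smooth Hermitian vector bundle of rank $r:=m-d$. Producing the desired $f_{d+1},\dots,f_m$ is \emph{exactly} the same as producing a global $C^\infty$ orthonormal frame of $E$: any such vectors must be orthogonal to $f_1,\dots,f_d$, hence take values in $E$, be orthonormal among themselves, and so form a unitary frame of $E_x$ at each $x$ since the dimensions match. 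Thus it suffices to trivialize $E$ as a Hermitian --- equivalently, as a complex --- bundle, a complex trivialization being turned into an orthonormal one by the (smooth) Gram--Schmidt process. Crucially, from $E\oplus T\cong X\times\C^m$ and $T\cong X\times\C^d$ one reads off that $E$ is \emph{stably trivial}: $E\oplus(X\times\C^d)\cong X\times\C^m$.

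Next I would prove the purely topological statement that a stably trivial rank-$r$ complex bundle over a compact $n$-manifold $X$ is trivial whenever $n\le 2r$. Let $c_E\colon X\to BU(r)$ be a classifying map for $E$. Stable triviality means the composite $X\xrightarrow{c_E}BU(r)\to BU$ is null-homotopic. In the fibration $U/U(r)\to BU(r)\to BU$ the fiber $U/U(r)$ is $2r$-connected; this follows from the inclusion $U(r)\hookrightarrow U$ being $2r$-connected, which one obtains by iterating the Stiefel fibrations with fibers $U(k)/U(k-1)=S^{2k-1}$ and chasing the long exact sequence together with Bott's computation of $\pi_*(U)$. A chosen null-homotopy therefore lifts $c_E$, up to homotopy, to a map $X\to U/U(r)$; since $\dim X=n\le 2r$ and the target is $2r$-connected, every such map is null-homotopic, whence $c_E$ is null-homotopic and $E$ is trivial. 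Equivalently, one may triangulate $X$ and extend the complementary orthonormal frame skeleton by skeleton, the successive obstructions lying in cohomology groups with coefficients in homotopy of sufficiently connected complex Stiefel fibers, all of which vanish in degrees $\le n\le 2r$.

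Finally I would pass from the continuous frame just produced to a $C^\infty$ one. Since $E$ is a smooth bundle, a continuous global frame can be uniformly approximated by a smooth section of the (smooth) frame bundle; as linear independence is an open condition, a sufficiently close smooth approximation is still a frame, and a single smooth Gram--Schmidt step makes it orthonormal, yielding the required $C^\infty$ maps $f_{d+1},\dots,f_m$ with $\langle f_k,f_l\rangle=\delta_{k,l}$ for all $k,l=1,\dots,m$.

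I expect the main obstacle to be the sharp connectivity bound: one must verify that the obstruction to trivializing $E$ first appears in degree $2r+1$, i.e.\ that $U/U(r)$ is \emph{exactly} $2r$-connected, which is precisely what forces the hypothesis $n\le 2(m-d)$. It is essential here that $E$ is stably trivial and not merely a rank-$r$ bundle with $2r\ge n$: generic rank-$r$ bundles in this range (for instance line bundles on $S^2$ with $r=1$, $n=2$) are nontrivial, and it is exactly stable triviality that lifts $c_E$ into the highly connected fiber $U/U(r)$ and thereby annihilates the lower-degree obstructions.
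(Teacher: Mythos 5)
Your argument is correct, but note that the paper does not prove this statement at all: it is quoted verbatim from Ashino and Kametani \cite[Theorem 1]{AK} and used as a black box, so the comparison is really with the cited source rather than with an in-paper proof. Your route --- pass to the orthogonal complement $E=T^\perp$ of the trivial subbundle spanned by $f_1,\dots,f_d$, observe that $E$ is stably trivial of rank $r=m-d$, and invoke the stable-range theorem (a stably trivial rank-$r$ complex bundle over an $n$-complex with $n\le 2r$ is trivial, via the $2r$-connectivity of the homotopy fiber $U/U(r)$ of $BU(r)\to BU$) --- is sound, and the final smoothing step (project a smooth uniform approximation back onto $E$ via $\mathbf I-\sum_l f_lf_l^*$, then Gram--Schmidt) is routine. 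The more direct and essentially equivalent argument, which is the one underlying the cited lemma and Gr\"ochenig's $d=1$ case, avoids classifying spaces: the given rows define a map $X\to V_d(\C^m)=U(m)/U(m-d)$ into the complex Stiefel manifold, which is $2(m-d)$-connected, so for $n\le 2(m-d)$ this map is null-homotopic and therefore lifts through the fibration $U(m)\to V_d(\C^m)$ by the covering homotopy property; the remaining columns of the lift are the desired completion. Both proofs hinge on exactly the same connectivity computation for complex Stiefel manifolds, so the difference is one of packaging: yours isolates the reusable bundle-theoretic statement (stable triviality plus stable range implies triviality), while the direct version is shorter and keeps everything at the level of frames. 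One cosmetic point: Bott periodicity is not actually needed for the connectivity of $U(r)\hookrightarrow U$; the iterated sphere fibrations $U(k)/U(k-1)=S^{2k-1}$ already give it.
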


Using Theorem \ref{AK} we prove a simultaneous extension of Theorem \ref{2p} and the result of Cabrelli and Gordillo \cite[Theorem 4.4]{CG} from integer to rational dilations.

\begin{theorem}\label{woj}
Let $A$ be an $n\times n$ expansive rational dilation and the lattice $\Ga= A\Zn+\Zn$. Let $N\in \N$ be such that $n \le 2(p-q)N $, where $p:=|\Ga/A\Z^n|$, and $q=|\Ga/\Z^n|$.

Then, for any $(A,\Gamma)$-MRA of multiplicity $N$ generated by $r$-regular scaling functions $\Phi=\{\vp^1, \ldots, \vp^N \}$,  there exists an orthonormal wavelet $\Psi=\{\psi^1,\ldots, \psi^{(p-q)N}\}$ consisting of $r$-regular functions.
\end{theorem}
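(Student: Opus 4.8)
The plan is to recognize Theorem \ref{woj} as a direct instance of the matrix-completion construction packaged in Corollary \ref{LH}, with the completion supplied by the Ashino--Kametani theorem (Theorem \ref{AK}). Starting from the given $(A,\Ga)$-MRA of multiplicity $N$ with $r$-regular scaling vector $\Phi=\{\vp^1,\ldots,\vp^N\}$, I would form the low-pass filter $M$ of \eqref{p1} and pass to its polyphase components $M^{\uparrow d_1},\ldots,M^{\uparrow d_p}$ from Lemma \ref{L6}, where $\{d_1,\ldots,d_p\}$ is a transversal of $\Ga/A\Zn$. By Corollary \ref{LH}, exhibiting an $r$-regular orthonormal wavelet $\Psi=\{\psi^1,\ldots,\psi^{(p-q)N}\}$ reduces to appending the $qN\times pN$ block $\begin{bmatrix} M^{\uparrow d_1} & \cdots & M^{\uparrow d_p}\end{bmatrix}$ by a $C^\infty$ $(p-q)N\times pN$ block $\begin{bmatrix} H^{\uparrow d_1} & \cdots & H^{\uparrow d_p}\end{bmatrix}$ so that the square matrix \eqref{cri4} becomes unitary, and then reading off $\Psi$ from the resulting high-pass filter $H$ via \eqref{H7} and \eqref{p2}.

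Before invoking Theorem \ref{AK}, I would check that the $qN$ rows of $\begin{bmatrix} M^{\uparrow d_1} & \cdots & M^{\uparrow d_p}\end{bmatrix}$ form a $C^\infty$ orthonormal family on the $n$-dimensional torus $X$ underlying the polyphase decomposition. Orthonormality is the $M$-only case of the criterion behind Lemmas \ref{L3} and \ref{L9}: since $E^\Ga(\Phi)=E^\Zn(\tilde\Phi)$ is orthonormal and $D_{A^{-1}}\tilde\Phi\subset V_0$ by refinability, the system $E^{A\Zn}(D_{A^{-1}}\tilde\Phi)$ is orthonormal, and its Lemma \ref{L1}-representation matrix is exactly the low-pass $M$; the orthonormality thus translates into $\sum_{j=1}^p M^{\uparrow d_j}(M^{\uparrow d_j})^*=\mathbf I_{qN}$, i.e.\ the $qN$ rows are orthonormal in $\C^{pN}$. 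Smoothness is where $r$-regularity enters: taking the multi-index $\al=0$ in the definition shows each $\vp^l$ decays faster than any polynomial, so the Fourier coefficients of $M$ — which are inner products of $\Ga$-translates and dilates of the rapidly decaying $\vp^l$ — decay rapidly; hence $M$ and, by regrouping Fourier coefficients, each $M^{\uparrow d_j}$ is $C^\infty$, upgrading the a.e.\ identity above to a genuine everywhere identity on $X$.

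Now I would apply Theorem \ref{AK} with $\dim X=n$, ambient dimension $m=pN$, and $d=qN$ prescribed orthonormal rows. Its hypothesis $n\le 2(m-d)=2(p-q)N$ is exactly the standing assumption of Theorem \ref{woj}, so Theorem \ref{AK} completes the $qN$ rows to a $C^\infty$ $pN\times pN$ matrix with orthonormal rows, i.e.\ unitary everywhere; its bottom $(p-q)N$ rows furnish the smooth polyphase blocks $H^{\uparrow d_j}$ required by Corollary \ref{LH}. That corollary then yields an orthonormal wavelet $\Psi$ with exactly $L=(p-q)N$ generators, matching the asserted count $N(|\Ga/A\Zn|-|\Ga/\Zn|)=N(p-q)$.

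Finally I would confirm $r$-regularity of $\Psi$. Because the completion, hence the filter $H$, is $C^\infty$ and periodic, $H$ has rapidly decaying Fourier coefficients, and the relation $b^{1/2}\widehat\Psi(B\xi)=H(\xi)\widehat\Phi(\xi)$ writes each $\psi^i$, up to the dilation $D_A$, as a rapidly convergent combination of $\Ga$-translates of the $r$-regular functions $\vp^l$, which is again $r$-regular. I expect the genuine difficulty to lie in the smoothness bookkeeping of the second paragraph: one must guarantee that $r$-regularity forces $M$ and every $M^{\uparrow d_j}$ to be truly $C^\infty$ and that the prescribed rows descend to an \emph{everywhere} orthonormal $C^\infty$ map on the compact manifold $X$, since Theorem \ref{AK} requires $C^\infty$ data rather than the merely measurable data produced by Lemma \ref{L1}. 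Once this is in place, the dimension matching $n\le 2(p-q)N$ makes the application of Ashino--Kametani and the remaining assembly routine.
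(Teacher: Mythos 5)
Your proposal is correct and follows essentially the same route as the paper: extract the low-pass filter $M$ from the refinement relation, use the rapid decay of the $r$-regular scaling functions to show $M$ and its polyphase blocks $M^{\uparrow d_j}$ are $C^\infty$ on $\T^n$, apply Theorem \ref{AK} with $m=pN$, $d=qN$ under the hypothesis $n\le 2(p-q)N$ to obtain a smooth unitary completion, and conclude via Corollary \ref{LH} together with the standard fact (cited in the paper as \cite[Lemma 5.13]{Woj}) that a rapidly convergent combination of translates of $r$-regular functions is $r$-regular. Your explicit verification that the $qN$ prescribed rows are everywhere orthonormal is left implicit in the paper but is exactly the right point to check before invoking Ashino--Kametani.
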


\begin{proof}
Given a column vector consisting of functions $f_1,\ldots,f_j \in L^2(\R^n)$ and $g\in L^2(\R^n)$ we denote
\[
[\lan F,g \ran] = \begin{bmatrix} \lan f_1,g \ran \\ \vdots \\ \lan f_j, g \ran \end{bmatrix}.
\]

Let $\tilde \Phi$ to be the column vector as in \eqref{p0}. Since $E^\Gamma(\Phi)$ is an orthonormal basis of $V_0$ and $D_{A^{-1}}\tilde \Phi \in V_0$, we have
\begin{equation}\label{woj3}
D_{A^{-1}}\tilde \Phi = %\begin{bmatrix} 
\sum_{l=1}^N \sum_{k\in \Gamma} [\lan D_{A^{-1}}\tilde \Phi, T_k \vp^l \ran]  T_k \vp^l.
\end{equation}
Since scaling functions $\vp^l$ have faster than polynomial decay, for any $d>0$, there exists a constant $c_d>0$ such that
\begin{equation}\label{woj5}
|| [\lan D_{A^{-1}}\tilde \Phi, T_k \vp^l \ran] || \le c_d (1+|k|)^{-d} \qquad\text{for all }k \in \R^n.
\end{equation}
By  taking the Fourier transform of \eqref{woj3}, Lemma \ref{v0} yields $qN \times N$ matrix-valued function $M$ on $\Rn$ such that $M$ is $\Ga^*$-periodic and  
\[
b^{1/2}\widehat {\tilde \Phi}(B\xi) = M(\xi) \widehat \Phi(\xi)  \qquad\text{a.e. }\xi.
\]
The Fourier coefficients of $M$ are $qN \times N$ matrices indexed by $k\in \Gamma$ given by
\[
\begin{bmatrix} 
[\lan D_{A^{-1}}\tilde \Phi, T_k \vp^1 \ran] \ldots  [\lan D_{A^{-1}}\tilde \Phi, T_k \vp^N \ran]
\end{bmatrix}.
\]
By \eqref{woj5} the entries of $M$ are $C^\infty$ functions. Consequently, the corresponding polyphase matrix functions $M^{\uparrow d_i}$, $i=1,\ldots,p$, are also $C^\infty$ functions defined on $\T^n$.

By Theorem \ref{AK} $qN \times pN$ matrix function 
$\begin{bmatrix} 
M^{\uparrow d_1}(\xi) & M^{\uparrow d_2}(\xi) & \ldots & M^{\uparrow d_p}(\xi) \end{bmatrix}$
can be appended  by $(p-q)N \times pN$ matrix function
$\begin{bmatrix} 
H^{\uparrow d_1}(\xi) & H^{\uparrow d_2}(\xi) & \ldots &H^{\uparrow d_p}(\xi)
\end{bmatrix}$,
which is $C^\infty$ on $\T^n$, such that the resulting matrix \eqref{cri4} is unitary for a.e. $\xi$. Thus, the corresponding  $(p-q)N \times N$ matrix function $H$, for which $H^{\uparrow d_i}$'s are its polyphase decomposition, is a $C^\infty$ and $\Gamma^*$-periodic. Define $\Psi=\{\psi^1,\ldots, \psi^{(p-q)N}\} \subset L^2(\R^n)$ such that 
\eqref{p2} holds, i.e., $H$ is a high-pass filter of $\Psi$. By Corollary \ref{LH}, $\Psi$ is an orthonormal wavelet with respect to the dilation $A$ and the lattice $\Z^n$. Finally, by \cite[Lemma 5.13]{Woj}, wavelet family $\Psi$ is $r$-regular.
\end{proof}

Combining Theorems \ref{MRA} and \ref{woj} yields the existence of Meyer wavelets for rational dilations.

\begin{theorem}\label{main} Suppose $A$ is a rational $n\times n$ expansive dilation. Then, for some $L\in \N$, there exists an orthonormal wavelet $\Psi=\{\psi^1,\ldots, \psi^{L}\}$ in the Schwartz class. Moreover, each $\widehat \psi^i$ is a $C^\infty$ function with compact support.
\end{theorem}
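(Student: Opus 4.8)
The plan is to obtain Theorem \ref{main} by feeding a band-limited Schwartz multiresolution analysis from Theorem \ref{MRA} into the wavelet construction machine of Theorem \ref{woj}. Since $A$ is rational, $\Ga=A\Zn+\Zn$ is a full rank lattice, so Theorem \ref{MRA} applies with this particular $\Ga$ and produces an $(A,\Ga)$-MRA $\{V_j\}_{j\in\Z}$ of some multiplicity $N$ whose scaling functions $\Phi=\{\vp^1,\ldots,\vp^N\}$ lie in the Schwartz class and have $C^\infty$, compactly supported Fourier transforms. This is precisely the setting of the standing assumptions of Section \ref{S4}: the dilation is $A$, the lattice is $A\Zn+\Zn$, and $E^\Ga(\Phi)$ is an orthonormal basis of $V_0$. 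Because $V_0\subset V_1=D_A(V_0)$ is part of being an MRA, the space $V_0$ is refinable, so all hypotheses of Theorem \ref{woj} are met except possibly the dimensional inequality.

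The one point requiring care is the multiplicity condition $n\le 2(p-q)N$ of Theorem \ref{woj}, where $p=|\Ga/A\Zn|$ and $q=|\Ga/\Zn|$. A volume computation gives $p=qb$ with $b=|\det A|\ge 2$, so $p-q=q(b-1)\ge 1$ and the quantity $2(p-q)N$ grows linearly in $N$; the inequality can therefore always be arranged, provided the multiplicity $N$ may be taken large enough. This is indeed possible: the construction underlying Theorem \ref{MRA} yields MRAs of arbitrarily large multiplicity, since enlarging the band region (the ellipsoid $\mathcal E$) enlarges the finite index set $\mathbf J$ of SFS generators while leaving the prescribed lattice and the band-limited Schwartz character of the scaling vector intact. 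Hence we fix $N$ with $n\le 2(p-q)N$ and set $L=(p-q)N$. I expect this reconciliation of the uncontrolled multiplicity coming out of Theorem \ref{MRA} with the lower bound demanded by the matrix-completion step to be the main obstacle, as it is the only place where the freedom in choosing the MRA is genuinely exploited.

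With $N$ so chosen, we apply Theorem \ref{woj} with $r=\infty$. Since Schwartz functions are $\infty$-regular, it produces an orthonormal wavelet $\Psi=\{\psi^1,\ldots,\psi^L\}$ of $\infty$-regular functions, associated (via Corollary \ref{LH}) with the dilation $A$ and the lattice $\Zn$. It then remains to verify the band-limited conclusion, namely that each $\widehat{\psi^i}$ is $C^\infty$ with compact support. This follows directly from the construction in Theorem \ref{woj}: the wavelets are defined through the high-pass filter by $b^{1/2}\widehat\Psi(B\xi)=H(\xi)\widehat\Phi(\xi)$, where $H$ is a $C^\infty$, $\Ga^*$-periodic matrix function and each $\widehat\vp^l$ is $C^\infty$ with compact support. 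Consequently every entry of $H(\xi)\widehat\Phi(\xi)$ is $C^\infty$ and compactly supported, whence $\widehat{\psi^i}$ is $C^\infty$ with compact support, and $\psi^i$ is automatically in the Schwartz class. This yields the wavelet asserted in Theorem \ref{main}.
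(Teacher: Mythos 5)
Your proposal is correct and follows essentially the same route as the paper: apply Theorem \ref{MRA} with $\Ga=A\Zn+\Zn$, use the freedom in the construction to make the multiplicity $N$ large enough that $n\le 2(p-q)N$, and then invoke Theorem \ref{woj} (with the band-limitedness read off from \eqref{p2}). One tiny inaccuracy: for rational non-integer dilations $b=|\det A|$ need not be $\ge 2$ (e.g.\ $b=3/2$), but your conclusion $p-q\ge 1$ still holds since $p=qb>q$ and both are positive integers.
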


\begin{proof}
Let $\Gamma=\Z^n+A\Z^n$.
By Theorem \ref{MRA}, for some $N\in \N$, there exists an $(A,\Gamma)$-MRA $\{V_j\}_{j\in\Z}$ of multiplicity $N$ such that its scaling functions $\{\vp^1, \ldots, \vp^N\} \subset V_0$ are in the Schwartz class. Moreover, each $\widehat \vp^i$ is a $C^\infty$ function with compact support. By the construction of an MRA in Theorem \ref{MRA}, we can choose $N$ to be arbitrary large so that $n \le 2(p-q)N $, where $p:=|\Ga/A\Z^n|$, and $q=|\Ga/\Z^n|$.
Hence, by Theorem \ref{woj} there exists an orthonormal wavelet $\Psi=\{\psi^1,\ldots, \psi^{L}\}$, $L=(p-q)N$, consisting of Schwartz class functions. By \eqref{p2}, each $\widehat \psi^i$ is a $C^\infty$ function with compact support.
\end{proof}

\section{Strictly expansive dilations} \label{S6}

In this section we generalize the results of Speegle and the author \cite{BS} on the existence  MRAs and Meyer wavelets associated with strictly expansive dilations with integer entries. We show that MRAs of multiplicity 1 exist for  strictly expansive dilations with real entries. This implies the existence of Meyer wavelets for rational dilations $A$, which are strictly expansive, with the smallest possible number of generators. We illustrate our results by presenting a natural lifting procedure of one dimensional rationally dilated wavelets to higher dimensions, which preserves the minimal number of generators.

\begin{definition} We say that an expansive $n\times n$ matrix dilation $A$
 is {\it strictly expansive} with respect to a full-rank lattice $\Ga \subset \R^n$ if there exists a compact 
 set $K\subset \R^n$  such that
\begin{align}
\label{sed1}
\sum_{k\in\Ga^*} \ch_K(\xi+k)=1 & \qquad\text{for a.e. }\xi\in\R^n,
\\
\label{sed2}
K  \subset B K^\circ & \qquad\text{where }B=A^\top \text{ and } K^\circ\text{ is the interior of }K.
\end{align}
\end{definition}

We shall prove the existence of $\infty$-regular MRAs associated with strictly expansive real dilations, which generalizes \cite[Theorem 3.2]{BS}. 

\begin{theorem}\label{se} Suppose $A$ is a $n\times n$ real dilation matrix and $\Ga \subset \R^n$ is a full rank lattice. 
If $A$ is strictly expansive with respect to $\Ga$, then there exists 
$(A,\Gamma)$-MRA of multiplicity $1$ such that its scaling function $\vp$ belongs to the Schwartz class. Moreover, $\widehat\vp$ is $C^\infty$ with compact support.
\end{theorem}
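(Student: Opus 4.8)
The plan is to build the scaling function directly on the Fourier side and to verify the multiresolution axioms by sandwiching the core space $V_0$ between two band-limited spaces, exactly as in the proof of Theorem~\ref{MRA}. This device is essential: the naive argument ``refinement equation $\Rightarrow V_0\subset V_1$'' would force a compatibility of the form $A\Ga\subset\Ga$, which fails for generic real dilations, whereas the sandwich $\check L^2(S_1)\subset V_0\subset\check L^2(S_2)$ with $S_2\subset B(S_1)$ sidesteps it because band-limited spaces are translation invariant. The blueprint is the Shannon resolution $V_j=\check L^2(B^jK)$, $B=A^\top$: the tiling hypothesis \eqref{sed1} makes $\vol(\Ga)^{1/2}\ch_K$ an orthonormal generator, while the strict containment \eqref{sed2}, i.e.\ $K\subset BK^\circ$, is precisely what yields $\check L^2(K)\subset\check L^2(BK)=D_A\check L^2(K)$. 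I will smooth this template.

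Since \eqref{sed2} gives $B^{-1}K\subset K^\circ$ with $B^{-1}K$ compact, first I would fix a compact core $C$ with $B^{-1}K\subset C\subset\ov C\subset K^\circ$, and then a small open neighbourhood $U$ of $K$ with $\ov U\subset BK^\circ$, chosen so small that simultaneously $\ov U\subset BC$ (equivalently $B^{-1}\ov U\subset C$, possible because $B^{-1}$ contracts and $B^{-1}K$ is a compact subset of $C^\circ$) and $\ov U\cap(C+k)=\emptyset$ for every $k\in\Ga^*\setminus\{0\}$ (possible because each $C+k$ lies a fixed positive distance $\rho=\dist(B^{-1}K,\partial K)$ inside the neighbouring tile $K+k$, while $\ov U$ protrudes from $K$ by at most a width $\ve<\rho$). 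Next I choose a smooth $\al\ge 0$ with $\al\equiv 1$ on $C$, $\supp\al\subset\ov U$, and $\al>0$ on $\ov K$, so that the $\Ga^*$-translates of $\{\al>0\}$ cover $\Rn$ and $S(\xi):=\sum_{k\in\Ga^*}\al(\xi+k)^2$ is smooth, $\Ga^*$-periodic, and strictly positive. Setting $\hv=\vol(\Ga)^{1/2}\,\al/\sqrt S$ gives a real-valued $C^\infty$ function supported in $\ov U\subset BK^\circ$, so $\vp$ is Schwartz, and the normalisation forces $\sum_{k\in\Ga^*}|\hv(\xi+k)|^2=\vol(\Ga)$ identically, i.e.\ $E^\Ga(\{\vp\})$ is orthonormal by \eqref{L5a}. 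By the disjointness arranged above, $S\equiv 1$ on $C$ and $\hv(\cdot+k)\equiv 0$ on $C$ for $k\neq 0$, so $\hv\equiv\vol(\Ga)^{1/2}$ on $C$ and the only nonzero fibre entry over $C$ is the $k=0$ one.

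With $V_0=\ov{\spa}\,E^\Ga(\{\vp\})$ and $V_j=D_{A^j}(V_0)$, properties (M4) and (M5) hold by construction. This clean-core property and Lemma~\ref{L1} (with $N=1$, realising $\widehat g/\hv$ on $C$ as a $\Ga^*$-periodic multiplier supported on the translates of $C$) give $\check L^2(C)\subset V_0$, while $V_0\subset\check L^2(\supp\hv)\subset\check L^2(\ov U)$ is automatic. Since $\ov U\subset BC$, I then obtain
\[
\check L^2(C)\subset V_0\subset\check L^2(\ov U)\subset\check L^2(BC)=D_A\check L^2(C)\subset D_A(V_0)=V_1,
\]
which is (M1). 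Moreover \eqref{sed2} forces $0\in K^\circ$: the compact sets $B^{-j}K$ decrease and have diameters tending to $0$, so their intersection is a single point fixed by $B^{-1}$, necessarily $0$; hence $0\in B^{-1}K\subset K^\circ$, and we may take $0\in C^\circ$. Then for any $g$ with $\widehat g$ compactly supported one has $\supp\widehat g\subset B^jC$ for $j$ large (as $B^jC\supset B^j\mathbf B(0,r)\uparrow\Rn$), so $g\in\check L^2(B^jC)\subset V_j$, giving (M3); and $f\in\bigcap_jV_j$ has $\widehat f$ supported in $B^j\ov U\subset\mathbf B(0,r_j)$ with $r_j\to 0$ as $j\to-\infty$, forcing $f=0$ and giving (M2). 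This completes the MRA.

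The main obstacle is the geometric juggling of the second paragraph: producing a single core $C$ and support neighbourhood $\ov U$ that are at once nested correctly under $B$ (both $B^{-1}\ov U\subset C$ and $\ov U\subset BC$), disjoint from all nontrivial $\Ga^*$-translates of $C$ (to keep the fibres rank one and $\hv$ locally constant over $C$), and large enough that the translates of $\{\al>0\}$ cover $\Rn$ so that $S>0$. These constraints can be met simultaneously only because \eqref{sed2} supplies the strict gap between $B^{-1}K$ and $\partial K$; quantitatively one tracks the interior margin $\rho$ and the neighbourhood width $\ve<\rho$ and uses the contraction of $B^{-1}$. Everything else --- smoothness of the quotient $\al/\sqrt S$, the Schwartz property, and the band-limited nesting identities --- is routine.
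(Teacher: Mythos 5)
Your construction follows the same strategy as the paper's proof: build $\hv$ as a smooth window adapted to the tiling set $K$, normalize over the fibers to get orthonormality of $E^\Ga(\vp)$, and verify (M1)--(M3) by sandwiching $V_0$ between two band-limited spaces $\check L^2(C)\subset V_0\subset \check L^2(\ov U)$ with $\ov U\subset B(C)$. The paper does exactly this with $C=K^{-\ve}=\{\xi:\mathbf B(\xi,\ve)\subset K\}$ and $\ov U=K^{+\ve}$. Your verification of (M2), (M3), and the reduction of (M1) to the sandwich are all fine.

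There is, however, one step that does not survive the paper's literal hypotheses: the claim that you can arrange $\ov U\cap(C+k)=\emptyset$ for all $k\in\Ga^*\setminus\{0\}$, which is what you need both for $S\equiv 1$ on $C$ and for the rank-one fiber property giving $\check L^2(C)\subset V_0$. Your justification assumes that a point within $\ve$ of $K$ cannot lie deep inside a neighboring tile $K+k$, i.e.\ implicitly that $K\cap (K+k)^\circ=\emptyset$. But \eqref{sed1} is only an a.e.\ statement, so $K$ may carry measure-zero protrusions into the interiors of its translates. Concretely, take $n=1$, $\Ga=\Z$, $A=B=3$, and $K=[-1/2,1/2]\cup\{6/5\}$: then \eqref{sed1} holds a.e., $K^\circ=(-1/2,1/2)$, and $K\subset BK^\circ=(-3/2,3/2)$, so $A$ is strictly expansive with this $K$; yet $B^{-1}K\ni 2/5$ forces $2/5\in C$, every neighborhood $\ov U$ of $K$ contains $6/5=2/5+1\in C+1$, and since you require $\al>0$ on all of $\ov K$ you get $\al(\xi+1)>0$ for $\xi$ near $2/5$ in $C$. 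Then $S>1$ there, the fiber of $\hv$ over $C$ has two nonzero entries, and $\check L^2(C)\subset V_0$ genuinely fails for your $\vp$, breaking the sandwich. The paper's proof is immune to this: it sets $f=\ch_K * g$, which only sees $K$ modulo null sets, and deduces $f(\xi+k)=0$ for $k\ne 0$ on $K^{-\ve}$ from the \emph{algebraic} identity $\sum_{k\in\Ga^*}f(\xi+k)=1$ together with $f\ge 0$ and $f\equiv 1$ on $K^{-\ve}$ --- no geometric disjointness of supports is invoked. Your argument is easily repaired by adopting this convolution-plus-partition-of-unity device (or by first replacing $K$ with a representative free of such null protrusions), but as written the disjointness step is a genuine gap.
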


\begin{proof}
Suppose the compact set $K$ satisfies \eqref{se1} and \eqref{se2}.
Given $\ve>0$ define
\[
\begin{aligned}
K^{-\ve} &= \{\xi\in\R^n: \mathbf B(\xi,\ve) \subset K \} \\
K^{+ \ve} &= \{\xi\in\R^n: \mathbf B(\xi,\ve) \cap K \not= \emptyset\} .
\end{aligned}
\]
Note that $K^{-\ve}$ is closed, $K^{+\ve}$ is open,
and the interior of $K$ satisfies $K^\circ= \bigcup_{\ve>0} K^{-\ve}$.
Hence there exists $\ve>0$ such that
\bq\label{se1}
K^{+\ve} \subset B(K^{-\ve}).
\eq
Pick a $C^\infty$ function $g:\R^n \to [0,\infty)$ such that $\int_{\R^n} g=1$ and
\[
\supp g:= \{ \xi\in\R^n: g(\xi) \not= 0 \}  = \mathbf B(0,\ve).
\]
Let $f = \ch_K * g$ be a smoothing of $\ch_K$.
Clearly $f$ is in the class $C^\infty$, $0\le f(\xi) \le 1$, and
\bq\label{se2}
K^{-\ve} \subset \{\xi\in\R^n: f(\xi)=1 \}\subset \supp f \subset K^{+\ve}.
\eq
Moreover, by \eqref{se1}
\bq\label{se3}
\sum_{k\in\Ga^* } f(\xi+k)=
\sum_{k\in\Ga^*} T_k(\ch_K* g)(\xi)=  \bigg (\sum_{k\in\Ga^*} T_k \ch_K \bigg)*  g(\xi) =1 \qquad\text{for all }\xi \in\R^n.
\eq
Define function $\vp\in L^2(\R^n)$ by 
\[
\widehat \vp(\xi) = \sqrt{\vol(\Gamma)} f(\xi) \bigg( \sum_{k\in \Gamma^*} f(\xi+k)^2 \bigg)^{-1/2}.
\]
For every $\xi \in \Gamma^*$, only finitely many terms $f(\xi+k)$ are non-zero. Hence, the above sum defines a positive $C^\infty$ and $\Ga^*$-periodic function. Consequently, $\vp$ is a band-limited $C^\infty$ and hence in the Schwartz class. Since
\[
\sum_{k\in \Gamma^*} |\widehat \vp(\xi+k)|^2 = \vol(\Gamma) \quad\text{for all }\xi,
\]
the system of translates $E^\Ga(\vp)$ is an orthonormal basis of its closed linear span 
\[
V_0 = \ov{\spa} \{ T_\ga \vp: \ga \in \Ga  \}.
\]

Define the spaces $V_j$ by $V_j=D_{A^j}(V_0)$, $j\in\Z$. We claim that $(V_j)_{j\in \Z}$ is an MRA. The properties (M4) and (M5) hold automatically.  By \eqref{se2} and \eqref{se3}, if $\xi \in K^{-\ve}$, then $\widehat \vp (\xi+k) =0$ for all $k\in \Ga^* \setminus \{0\}$.
Hence, by Lemma \ref{L1}, we have
\bq\label{se5}
\check L^2(K^{-\ve}) =  \{ f \in L^2(\Rn): \supp \widehat f  \subset K^{-\ve} \}  \subset V_0.
\eq
On the other hand, by \eqref{se1} and \eqref{se2}
\bq\label{se6}
V_0 \subset \check L^2( \supp \vp) \subset \check L^2(K^{+\ve}) \subset   \check L^2(B(K^{-\ve})).
\eq
MRA properties (M1)--(M3) now follow easily. Indeed, by \eqref{se5} and \eqref{se6} we have for $j\in \Z$,
\[
V_j =  D_{A^j}(V_0) \subset D_{A^j}(\check L^2(B(K^{-\ve})))= 
\check L^2(B^{j+1}(K^{-\ve}))= D_{A^{j+1}}(\check L^2(K^{-\ve})) \subset V_{j+1} .
\]
Since $B$ is an expansive matrix we have $\bigcap_{j\in \Z} B^j(K^{-\ve}) = \{0\}$ and $\bigcup_{j\in \Z} B^j(K^{-\ve}) = \Rn$. This yields (M2) and (M3) and shows Theorem \ref{se}.
\end{proof}

\begin{remark}
When $A$ is an integer dilation matrix, Theorem \ref{se} can be alternatively deduced from the existence of a low-pass filter $m$ such that $m$ is $\Z^n$-periodic $C^\infty$ function satisfying 
\begin{equation}\label{se7}
\sum_{j=1}^p |m(\xi+\omega_p)|^2 = 1 \qquad\text{for all }\xi\in \R^n,
\end{equation}
where $B=A^\top$, and $\{\omega_1,\ldots,\omega_p\}$ is a transversal set of $\Z^n/A^\top\Z^n$. In addition, if $m$ is chosen such that $m(\xi)=1$ for $\xi$ in an appropriate neighborhood of the origin, then the function $\vp$, given by
\[
\widehat \vp(\xi) = \prod_{j=1}^\infty m ( (A^\top)^{-j} \xi ), \qquad\xi\in \R^n,
\]
is a scaling function of $(A,\Z^n)$-MRA, see \cite[Theorem 2.3 in Ch. 2]{B1} and \cite[Theorem 3.2]{BS}. A similar result for $r$-regular wavelets was shown in \cite[Theorem 2.3]{B2}. 

The construction of a low-pass filter $m$ follows the proof of Theorem \ref{se} with $\Gamma=\Z^n$. By \eqref{se3} we have a  compactly supported $C^\infty$ function $f:\R^n\to [0,1]$ satisfying $\sum_{k\in \Z^n} f(\xi+k)=1$ for all $\xi\in \R^n$ . Then, it is easy to verify that the function
\begin{equation}\label{se8}
m(\xi)= \sqrt{ \sum_{k\in \Z^n} f(A^\top(\xi+k)) }
\end{equation}
satisfies \eqref{se7}. However, care needs to be taken to guarantees that $m$ is actually a $C^\infty$ function.
Even if we assume that a non-negative $C^\infty$ function ``vanishes strongly" ($f(\xi)=0$ $\implies$ all its partial derivatives $\partial^\alpha f(\xi)=0$), then $\sqrt{f}$ might not be smooth. The following example gives two functions with smooth square roots such that their sum does not have a smooth square root
\[
g(t) = \begin{cases} \sin^2(1/t) e^{-1/t} + e^{-2/t} & t>0,
\\
0 &t \le 0.
\end{cases}
\]
To guarantee smoothness of the square root of a function, we can employ the following elementary lemma.

\begin{lemma}\label{sqrt}
Let $h: \R \to [0,\infty)$ be given by 
\[
h(t)= \begin{cases} e^{-1/t} &t>0, \\
0 & t\le 0.
\end{cases}
\]
Suppose $f_1,\ldots, f_k: \R^n \to [0,\infty)$ are $C^\infty$ functions. Then, $(h\circ f_1+\ldots+h \circ f_k)^{1/2}$ is a $C^\infty$ function.
\end{lemma}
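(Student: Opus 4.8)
The plan is to reduce the statement to a single one-variable-per-coordinate model and then to exploit the extreme flatness of $h$ at the origin. Recall the classical facts that $h\in C^\infty(\R)$ and that, for $t>0$, one has $h^{(m)}(t)=h(t)\,Q_m(1/t)$ for a polynomial $Q_m$, while $h^{(m)}(t)=0$ for $t\le 0$. Since $e^{-\theta/t}$ decays faster than any power of $1/t$ as $t\to 0^+$, the function $t\mapsto h(t)^{\theta}Q_m(1/t)=e^{-\theta/t}Q_m(1/t)$ is bounded on $(0,\infty)$ for every $\theta>0$; hence for each $m\ge 0$ and each $\theta\in(0,1)$ there is a constant with
\[
|h^{(m)}(t)|\le C_{m,\theta}\,h(t)^{\,1-\theta}\qquad\text{for all }t\in\R .
\]
This one estimate, carrying an \emph{adjustable} exponent, is the engine of the argument.

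First I would reduce to a universal model. Define $H:\R^k\to[0,\infty)$ by $H(y)=\big(\sum_{i=1}^k h(y_i)\big)^{1/2}$. Because $(f_1,\dots,f_k):\R^n\to\R^k$ is $C^\infty$ and $(h\circ f_1+\dots+h\circ f_k)^{1/2}=H\circ(f_1,\dots,f_k)$, it suffices to prove $H\in C^\infty(\R^k)$, the conclusion then following from the chain rule for compositions of smooth maps. Write $P(y)=\sum_i h(y_i)$, so $H=P^{1/2}$. On the open set $\Omega=\{P>0\}$ the function $P$ is smooth and positive, so $H$ is smooth there; on the interior of $Z:=(-\infty,0]^k=\{P=0\}$ we have $H\equiv 0$. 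Thus the only issue is smoothness across $\partial Z$, and near a point $y_0\in\partial Z$ every coordinate with $y_{0,j}<0$ stays negative and contributes nothing, so one may assume all active coordinates vanish at $y_0$ and $P(y)\to 0$ as $y\to y_0$.

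The heart of the matter is a decay estimate for the derivatives of $H$ on $\Omega$. By the Fa\`a di Bruno formula for $s\mapsto s^{1/2}$, the derivative $\partial^\alpha H$ is a finite combination of terms $P^{1/2-l}\prod_{j=1}^l \partial^{\gamma_j}P$ with $\gamma_1+\dots+\gamma_l=\alpha$ and each $|\gamma_j|\ge 1$. Since $P$ is a sum of one-variable functions, $\partial^{\gamma_j}P$ vanishes unless $\gamma_j$ is concentrated on a single coordinate, in which case it equals $h^{(|\gamma_j|)}$ of that coordinate. Grouping the surviving factors by coordinate and writing $a_i=h(y_i)$, each such term takes the shape $P^{1/2-l}\prod_i a_i^{m_i}R_i(1/y_i)$, where $m_i$ counts how many factors fall on coordinate $i$ (so $\sum_i m_i=l$) and $R_i$ is a polynomial. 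Here is the key step: for each active coordinate I extract a small power $a_i^{\theta}$ to absorb $R_i(1/y_i)$ through the boundedness of $a_i^{\theta}R_i(1/y_i)=e^{-\theta/y_i}R_i(1/y_i)$, choosing the single value $\theta=\tfrac{1}{2(k+1)}$, admissible since at most $s\le k$ coordinates are active. Using $a_i\le P$ on the remaining powers $a_i^{m_i-\theta}$ (all exponents positive) gives, for $P\le 1$,
\[
\Big|P^{1/2-l}\prod_i a_i^{m_i}R_i(1/y_i)\Big|\le C\,P^{1/2-l}\,P^{\,l-s\theta}=C\,P^{\,1/2-s\theta}\le C\,P^{\,1/2-k\theta}.
\]
With this choice $1/2-k\theta=\rho:=\tfrac{1}{2(k+1)}>0$, so every multi-index $\alpha$ with $|\alpha|\ge 1$ satisfies $|\partial^\alpha H(y)|\le C_\alpha\,P(y)^{\rho}=C_\alpha\,H(y)^{2\rho}$ on $\Omega$, and in particular $\partial^\alpha H\to 0$ as $y\to\partial Z$. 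I expect this estimate to be the main obstacle: the naive bound $|h^{(m)}|\le C\sqrt h$ controls only derivatives falling on a single coordinate and blows up on the mixed ($s\ge 2$) terms, so one genuinely needs the adjustable exponent $\theta$ to convert the leftover exponential flatness into a positive power of $P$.

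Finally I would promote these bounds to honest smoothness by induction on $|\alpha|$. Each $\partial^\alpha H$ extends continuously to all of $\R^k$ by setting it equal to $0$ on $Z$, continuity across $\partial Z$ being exactly the decay just established. To see that the extension is the classical derivative, I use that $Z=(-\infty,0]^k$ is convex, so every line parallel to a coordinate axis meets $Z$ in an interval on which both $\partial^\beta H$ and its formal derivative vanish, while on the complementary rays, which lie in $\Omega$, the fundamental theorem of calculus applies and continuity at the junction glues the pieces. This shows that $\partial_i(\partial^\beta H)$ exists everywhere and coincides with the continuous extension of $\partial^{\beta+e_i}H$, completing the induction, proving $H\in C^\infty(\R^k)$, and hence the lemma.
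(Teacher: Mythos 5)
Your proof is correct. Note that the paper states Lemma \ref{sqrt} without proof (it is offered as an ``elementary lemma'' inside a remark), so there is no argument of the author's to compare against; your write-up in effect supplies the missing proof. The reduction to the universal model $H(y)=\bigl(\sum_i h(y_i)\bigr)^{1/2}$ on $\R^k$ is clean and even dispenses with the hypothesis $f_i\ge 0$, and the two ingredients you isolate are exactly the right ones: the adjustable-exponent bound $|h^{(m)}(t)|\le C_{m,\theta}h(t)^{1-\theta}$, and the trivial inequality $h(y_i)\le P(y)$, which together turn every Fa\`a di Bruno term $P^{1/2-l}\prod_j\partial^{\gamma_j}P$ into $O(P^{1/2-s\theta})$ with $s\le k$ active coordinates, so that the fixed choice $\theta=\tfrac{1}{2(k+1)}$ leaves a positive power of $P$. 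You are right that this is where the content lies --- the naive estimate $|h^{(m)}|\le C\sqrt h$ suffices only when all derivatives fall on one coordinate and fails for the mixed terms, which is presumably why the lemma is stated for the specific flat function $h$ rather than for an arbitrary nonnegative smooth function (for which the conclusion is false, as the paper's own example $g(t)=\sin^2(1/t)e^{-1/t}+e^{-2/t}$ shows). The final bootstrapping step, extending each $\partial^\alpha H$ by zero across $Z=(-\infty,0]^k$ and verifying via the mean value theorem on coordinate lines that the extension is the classical derivative, is standard and carried out correctly; the only cosmetic remark is that the one-dimensional case of this bootstrap is precisely the classical proof that $h$ itself is $C^\infty$, so your induction generalizes the familiar argument rather than assuming it.
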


Instead of defining function $m$ by \eqref{se8}, we let
\begin{equation}\label{se9}
m(\xi)=  \bigg(\sum_{k\in \Z^n} h(f(A^\top(\xi+k))) \bigg)^{1/2} 
\bigg( \sum_{k\in \Z^n} h(f(A^\top\xi+k)) \bigg)^{-1/2}.
\end{equation}
A simple calculation shows that $m$ satisfies \eqref{se7}. By Lemma \ref{sqrt} $m$ is a $C^\infty$ function. Since zero sets of both functions \eqref{se8} and \eqref{se9} are identical, \cite[Claim 3.3]{BS} holds and the rest of the proof of \cite[Theorem 3.2]{BS} remains unchanged.
\end{remark}

Combining Theorems \ref{woj} and \ref{sew} yields the existence of Meyer wavelets for rational dilations with the minimal number of generators.

\begin{theorem}\label{sew}
Suppose that $A$ is $n\times n$ matrix with rational entries such that:
\begin{enumerate}[(i)]
\item $A$ is strictly expansive with respect to the lattice $\Ga= A\Z^n+\Z^n$, 
\item we have $n \le 2(p-q)$, where $p=|\Ga/A\Zn|$ and $q=|\Ga/\Z^n|$.
\end{enumerate}
Then, there exists an orthonormal wavelet $\Psi$ with respect to the dilation $A$ and the lattice $\Z^n$, which consists of $(p-q)$ functions in the Schwartz class. Moreover, the Fourier transform of each function in $\Psi$ is $C^\infty$ with compact support.
\end{theorem}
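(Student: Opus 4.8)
The plan is to feed the multiplicity-one MRA of Theorem \ref{se} into the wavelet construction of Theorem \ref{woj}. First I would invoke Theorem \ref{se}: by hypothesis (i), $A$ is strictly expansive with respect to $\Ga = A\Z^n + \Z^n$, so Theorem \ref{se} produces an $(A,\Gamma)$-MRA $\{V_j\}_{j\in\Z}$ of multiplicity $N=1$ whose scaling function $\vp$ is in the Schwartz class with $\widehat\vp$ being $C^\infty$ and compactly supported. Crucially, this is an MRA for the exact lattice $\Ga = A\Z^n + \Z^n$ demanded by the standing assumptions of Section \ref{S4}, and the refinability $V_0 \subset V_1$ is built into the MRA via property (M1).

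Next I would observe that any Schwartz function is $\infty$-regular in the sense of Section \ref{S5}, since all its partial derivatives decay faster than any polynomial; hence $\vp$ generates an $\infty$-regular MRA of multiplicity $N=1$. Hypothesis (ii), namely $n \le 2(p-q)$, is precisely the dimensional condition $n \le 2(p-q)N$ of Theorem \ref{woj} specialized to $N=1$. Applying Theorem \ref{woj} then yields an orthonormal wavelet $\Psi = \{\psi^1,\ldots,\psi^{(p-q)}\}$ consisting of $\infty$-regular functions associated with the dilation $A$ and the standard lattice $\Z^n$, which accounts for the asserted number $p-q$ of generators.

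Finally I would read off the Fourier-side regularity. In the proof of Theorem \ref{woj} the high-pass filter $H$ is constructed to be $C^\infty$ and $\Ga^*$-periodic, and the wavelets satisfy \eqref{p2}, that is $b^{1/2}\widehat\Psi(B\xi) = H(\xi)\widehat\Phi(\xi)$. Since $\widehat\vp$ is $C^\infty$ with compact support and $H$ is $C^\infty$, each $\widehat\psi^i$ is $C^\infty$ with compact support, so each $\psi^i$ belongs to the Schwartz class. There is no deep obstacle here: the argument is essentially a verification that the output of Theorem \ref{se} meets the hypotheses of Theorem \ref{woj} verbatim. The only points requiring care are confirming that the lattice produced by Theorem \ref{se} is exactly $\Ga = A\Z^n + \Z^n$ (which holds because hypothesis (i) posits strict expansiveness relative to this very lattice) and that the Schwartz regularity of $\vp$ upgrades to the $\infty$-regularity that Theorem \ref{woj} consumes.
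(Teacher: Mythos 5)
Your proposal is correct and follows essentially the same route as the paper: the paper's proof simply says to repeat the proof of Theorem \ref{main} with Theorem \ref{se} supplying the multiplicity-one $(A,\Gamma)$-MRA in place of Theorem \ref{MRA}, and then applies Theorem \ref{woj} with $N=1$, which is exactly what you do (your reading of \eqref{p2} to get the compactly supported $C^\infty$ Fourier transforms likewise matches the paper's argument). The only difference is that you spell out the verification the paper leaves implicit.
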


\begin{proof} It suffices to follow the proof of Theorem \ref{main} by applying Theorem \ref{sew} instead of Theorem \ref{MRA}.
\end{proof}

Theorem \ref{sew} yields the existence of rationally dilated Meyer wavelets on the real line, which were constructed by Auscher \cite{Au0, Au1}. The construction of such wavelets for the dilation factor $\frac{q+1}{q}$ is attributed to G. David, see \cite[Section 3.3 in Part II]{KLR}. The construction of wavelets for the dilation factor $3/2$ is nicely explained in the book of Daubechies \cite[Section 10.4]{Dau}.

\begin{example}\label{ex1}
Let $p, q\in \N$ be relatively prime such that $p>q$. Let $A=p/q$ be a dilation factor. By Theorem \ref{se} there exists a $(A,\frac{1}{q}\Z)$-MRA with scaling function $\vp$ such that $\widehat \vp$ is $C^\infty$ with compact support. By \eqref{p1} there exists $\tfrac1q \Z$-periodic $C^\infty$ functions $m_j$, $j=1,\ldots,q$, such that
\begin{equation}\label{ex2}
(\tfrac{p}{q})^{1/2} e^{-2\pi i j p\xi /q^2} \widehat\vp (\tfrac{p}{q}\xi ) = m_j(\xi) \widehat \vp(\xi) \qquad \text{for } \xi \in \R.
\end{equation}
The vector-valued function $M=\begin{bmatrix} m_1 \ldots m_q \end{bmatrix}^\top$ is a low-pass filter of $\vp$. By Lemma \ref{L3}, the following $q\times p$ matrix has orthogonal rows for all $\xi \in \R$,
\[
\frac{1}{\sqrt p}
\begin{bmatrix} 
m_1(\xi+\omega_1) & m_1(\xi+\omega_2) & \ldots & m_1(\xi+\omega_p)
\\
\vdots & \vdots & & \vdots \\
\\
m_q(\xi+\omega_1) & m_q(\xi+\omega_2) & \ldots & m_q(\xi+\omega_p)
\end{bmatrix}.
\]
By Theorem \ref{sew} there exists an orthonormal wavelet $\Psi=\{\psi^1,\ldots,\psi^{p-q} \}$ with respect to the dilation $A$ and the lattice $\Z$ such that each $\widehat \psi^i$ is $C^\infty$ with compact support. By \eqref{p2} there exists $\tfrac1q \Z$-periodic $C^\infty$ functions $h_j$, $j=1,\ldots,p-q$, such that
\begin{equation}\label{ex3}
(\tfrac{p}{q})^{1/2} \widehat\psi^j (\tfrac{p}{q}\xi ) = h_j(\xi) \widehat \vp(\xi) \qquad \text{for } \xi \in \R.
\end{equation}
The vector-valued function $H=\begin{bmatrix} h_1 \ldots h_{p-q} \end{bmatrix}^\top$ is a high-pass filter of $\Psi$.  By Theorem \ref{cri}, the following $p\times p$ matrix is orthogonal for all $\xi \in \R$,
\[
\frac{1}{\sqrt p}
\begin{bmatrix} 
m_1(\xi+\omega_1) & m_1(\xi+\omega_2) & \ldots & m_1(\xi+\omega_p)
\\
\vdots & \vdots & & \vdots \\
\\
m_q(\xi+\omega_1) & m_q(\xi+\omega_2) & \ldots & m_q(\xi+\omega_p)
\\
h_1(\xi+\omega_1) & h_1(\xi+\omega_2) & \ldots & h_1(\xi+\omega_p)
\\
\vdots & \vdots & & \vdots \\
\\
h_{p-q}(\xi+\omega_1) & h_{p-q}(\xi+\omega_2) & \ldots & h_{p-q}(\xi+\omega_p)
\end{bmatrix}.
\]
\end{example}

The next example shows how to lift one dimensional wavelet basis to higher dimensions. This is typically done by tensoring for dyadic dilations $A=2 \mathbf I_n$  as explained in several books on wavelets \cite{Dau, Mey, NPS, Woj}. However, we shall present a variant of this construction for a less obvious choice of a dilation matrix.

\begin{example}\label{ex4}
Let $n\ge 2$. Keeping the same notation as in Example \ref{ex1}, we let $\tilde A$ be $n\times n$ matrix given by
\[
\tilde A= \begin{bmatrix}
 & \frac{1}{q} & & \\
 & & 1 & & \\
 & & & \ddots & \\
 & & & & 1 \\
p & & & &
\end{bmatrix}
\]
When $n=2$, the matrix $\tilde A$ has zero diagonal and off-diagonal terms $p$ and $1/q$.
Define the lattice $\Gamma = \Z^n+\tilde A\Z^n = \frac{1}{q}\Z \times \Z^{n-1} $. Let $\vp$ be the scaling function from Example \ref{ex1}. Define the function $\phi$ on $\R^n$ by
\[
\phi(x_1,\ldots,x_n) =  q^{-(n-1)/2} \vp(x_1)  \vp(\tfrac1q x_{2})  \ldots \vp(\tfrac1q x_n) \qquad (x_1,\ldots,x_n) \in \R^n.
\]
Since
\[
\widehat \phi(\xi_1,\ldots,\xi_n)= q^{(n-1)/2} \widehat\vp(\xi_1) \widehat\vp( q \xi_{2}) \ldots \widehat\vp(q \xi_n) \qquad (\xi_1,\ldots,\xi_n) \in \R^n,.
\]
we have
\begin{multline*}
\sum_{k\in \Gamma^*} |\widehat\phi(\xi+k)|^2 
= q^{n-1} \sum_{k_1\in q\Z} |\widehat\vp(\xi_1+k_1)|^2  \sum_{k_{2} \in \Z} |\widehat\vp(q (\xi_{2}+k_{2}))|^2  \ldots 
\sum_{k_n\in \Z} |\widehat\vp(q(\xi_n+k_n))|^2 \\
= \frac{1}{q} \qquad \text{for a.e. } \xi=(\xi_1,\ldots,\xi_n) \in \R^n.
\end{multline*}
Hence, the system of translates $E^\Gamma(\phi)$ is an orthonormal basis of its closed linear span $\tilde V_0$. Let $\mathbf e=(1,0,\ldots,0) \in \R^q$. For $j=1,\ldots,q$ by \eqref{ex2} we have 
\[\begin{aligned}
\widehat{ (D_{\tilde A^{-1}} T_{j/q \mathbf e}\phi) } &(\xi_1,\ldots,\xi_n) = D_{\tilde A^\top} \widehat {T_{j/q \mathbf e}\phi} (\xi_1,\ldots,\xi_n)  
= (\tfrac{p}{q})^{1/2}  e^{-2\pi i j p\xi_n /q}  \widehat \phi( p \xi_n, \tfrac{1}q \xi_1, \xi_2,\ldots, \xi_{n-1})
\\
&=p^{1/2}q^{(n-2)/2} \widehat \vp(\xi_1)  \widehat \vp(q \xi_{2}) \ldots \widehat\vp( q\xi_{n-1})  e^{-2\pi i j p\xi_n /q} \widehat\vp(p\xi_n)
\\
&=q^{(n-1)/2} \widehat \vp(\xi_1)  \widehat \vp(q \xi_{2}) \ldots \widehat\vp( q\xi_{n-1})  m_j(q\xi_n) \widehat\vp(q\xi_n)= m_j(q\xi_n) \widehat \phi(\xi_1,\ldots,\xi_n).
\end{aligned}
\]
This calculation and Lemma \ref{v0} imply that the space $\tilde V_0$ is refinable, i.e., $\tilde V_0 \subset \tilde V_{1} :=D_{\tilde A}\tilde V_0$. Hence, $\phi$ is a scaling function of $(\tilde A,\Gamma)$-MRA $\{\tilde V_j\}_{j\in \Z}$. Moreover, the low-pass filter of $\phi$ is the function $(\xi_1,\ldots,\xi_n) \mapsto M(q\xi_n)$. Hence, it is nearly identical as the low-pass filter $M$ of $\vp$. 

Finally, define functions $\theta^j$,  $ j=1,\ldots,p-q$ on $\R^n$ by
\[
\theta^j(x_1,\ldots,x_n) =  q^{-(n-1)/2} \psi^j(x_1)  \vp(\tfrac1q x_{2})  \ldots \vp(\tfrac1q x_n) \qquad (x_1,\ldots,x_n) \in \R^n.
\]
A similar calculation as above  using \eqref{ex3} shows that
\[
\begin{aligned}
\widehat{ (D_{\tilde A^{-1}}  \theta^j) } &(\xi_1,\ldots,\xi_n) = (\tfrac{p}{q})^{1/2}   \widehat \theta^j( p \xi_n, \tfrac{1}q \xi_1, \xi_2,\ldots, \xi_{n-1})
\\
&=p^{1/2}q^{(n-2)/2} \widehat \vp(\xi_1)  \widehat \vp(q \xi_{2}) \ldots \widehat\vp( q\xi_{n-1})  \widehat \psi^j(p\xi_n)
\\
&=q^{(n-1)/2} \widehat \vp(\xi_1)  \widehat \vp(q \xi_{2}) \ldots \widehat\vp( q\xi_{n-1})  m_j(\xi_n) \widehat\vp(q\xi_n)= h_j(q\xi_n) \widehat \phi(\xi_1,\ldots,\xi_n).
\end{aligned}
\]
This calculation shows that $\Theta=\{\theta^1,\ldots,\theta^{p-q} \}$ is a an orthonormal wavelet for the dilation $\tilde A$ and the lattice $\Z^n$ with the high-pass filter  $(\xi_1,\ldots,\xi_n) \mapsto H(q\xi_n)$, where $H$ is the high-pass filter of $\Psi$. 
\end{example}

Consequently, the dilation $\tilde A$ as in Example \eqref{ex4} yields a natural lifting of one dimensional rationally dilated wavelets to higher dimensions, while keeping nearly identical low-pass and high-pass filters, and thus maintaining the same size of a wavelet family. This is in contrast to the dilation $A=\frac{p}{q} \mathbf I_n$, which results in a wavelet family of size $p^n-q^n$. We leave details of this construction to the reader.

While Theorem \ref{sew} yields the existence of Meyer wavelets with minimal number of generators, it requires a strict expansiveness assumption on a rational dilation $A$. The result of Speegle and the author \cite{BS} shows that Meyer wavelets with minimal ($|\det A|-1$) generators exist for all integer dilations $A$ in $\R^2$, despite that some dilations $A$ might not be strictly expansive. On the other hand, Theorem \ref{main} guarantees the existence of Meyer wavelets for any rational dilation albeit with possibly large number of generators. It is an open problem to characterize dilations $A$ for which there exist Meyer wavelets with a minimal number of generators. This problem remains open already in dimension $3$ for integer dilations $A$.

\section{Limitations on the existence of well-localized wavelets} \label{S7}

In this section we show that well-localized wavelets associated with an MRA can only exist for rational dilations, thus extending the one dimensional result of Lemari\'e-Rieusset  \cite{KLR}. To achieve this we shall use results on shift-invariant operators with localized kernels, which were developed by Lemari\'e-Rieusset  \cite{KLR, Lem2}.

Recall that a positive function $\omega$ on $\R^n$ is a Beurling weight if there exists constants $C,M>0$ such that:
\begin{enumerate}[(i)]
\item for all $x\in \R^n$, $1/C \le \omega(x) \le C (1+|x|)^M$,
\item $\int_{\R^n} \frac1{\omega(x)} dx <\infty$,
\item for all $x\in \R^n$, $\int_{\R^n} \frac 1{\omega(x-y)\omega(y)} dy \le C \frac 1{\omega(x)}$, and
\item for all $x,y\in \R^n$, $\omega(x+y) \le C\omega(x)\omega(y)$.
\end{enumerate}
For our purposes we shall only need to consider Beurling weights of the form $\omega(x)=(1+\rho_A(x))^\eta$, $\eta>1$. Here, $\rho_A$ is a quasi-norm associated with a dilation $A$, which satisfies three conditions:
\begin{enumerate}[(a)]
\item $\rho_A$ is $C^\infty$ on $\R^n \setminus \{0\}$ and continuous at $0$,
\item for all $x\ne 0$, $\rho_A(x)=\rho_A(-x)>0$, and
\item for all $x\in \R^n$, $\rho_A(x)=|\det A| \rho_A(x)$.
\end{enumerate}
The smoothness assumption (a) is not essential as it can be replaced by the triangle inequality:
\begin{enumerate}[(d)]
\item there exists $c>0$ such that for all $x,y\in \R^n$, $\rho_A(x+y)\le c (\rho_A(x)+\rho_A(y))$.
\end{enumerate}
It is well-known that quasi-norms associated with the same dilation matrix $A$ are equivalent, see \cite[Lemme 17]{Lem2} and \cite[Lemma 2.4]{B1}. By \cite[Lemma 22]{Lem2}, for $\eta>1$, $\omega(x)=(1+\rho_A(x))^\eta$ is a Beurling weight on $\R^n$. Let $L^2(\omega)$ denote the weighted Lebesgue space consisting of measurable $f$ satisfying 
\[
\int_{\R^n} |f(x)|^2 \omega(x) \,dx<\infty.
\]

\begin{definition}
Let $P_0: L^2(\R^n) \to L^2(\R^n)$ be a bounded linear operator. 
We say that:
\begin{enumerate}[(i)]
\item
$P_0$ is shift-invariant (SI) if
\[
T_kP_0 = P_0 T_k \qquad\text{for all }k \in\Z^n,
\]
\item
$P_0$ is an integral operator if there exists $p \in L^1_{loc}(\R^n \times \R^n)$ such that
\[
\lan P_0 d, g \ran = \int_{R^n} \int_{\R^n} p(x,y) f(y) \ov{g(x)} \, dy \, dx \qquad\text{for all } f,g \in C_c^\infty (\R^n),
\]
\item
$P_0$ has a localized kernel with respect to the weight $\omega(x)=(1+\rho_A(x))^\eta$, $\eta>1$, if
\[
 \int_{[0,1]^n} \int_{\R^n}  \omega(x-y)( |p(x,y)|^2 + |p(y,x)|^2) \, dy \, dx. 
\]
\end{enumerate} 
\end{definition}

We shall employ the invariant projection theorem due to Lemari\'e-Rieusset, which can be found in \cite[Theorem 1 in Chapter 3, Part II]{KLR} and \cite[Th\'eor\`eme 1]{Lem2}. While Theorem \ref{ipt} holds for general symmetric Beurling weights $\omega(x)=\omega(-x)$, we will only need it for the weights of the form $\omega(x)=(1+\rho_A(x))^\eta$, where $\eta>1$.

\begin{theorem} \label{ipt}
Let $P_0: L^2(\R^n) \to L^2(\R^n)$ be a bounded projection, $P_0 \circ P_0=P_0$, which is SI.  Let $V_0 = \operatorname{Im} P_0$ and $V_0^*= (\operatorname{Ker} P_0)^\perp$. 
Let $\omega$ be a symmetric Beurling weight. Then the following are equivalent:
\begin{enumerate}[(i)]
\item $P_0$ is an integral operator with $\omega$-localized kernel,
\item $V_0$ has a Riesz basis $E^{\Z^n}(\vp^1,\ldots,\vp^N)$ with each $\vp^j \in L^2(\omega)$ and $V_0^*$ has a Riesz basis $E^{\Z^n}( \vp^1_*,\ldots,\vp^N_*)$ with each $\vp^j_* \in L^2(\omega)$ that satisfy biorthogonality relation:
\[
\lan T_k \vp^j, T_{k'} \vp^{j'}_* \ran = \delta_{k,k'} \delta_{j,j'} 
\qquad\text{for } k,k' \in \Z^n, \ j,j'=1,\ldots,N,
\]
\end{enumerate}
\end{theorem}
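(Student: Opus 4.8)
The plan is to prove the two implications separately, first recording a symmetry that halves the work. Since $P_0$ is SI, from $T_kP_0=P_0T_k$ one gets $T_{-k}P_0^*=P_0^*T_{-k}$, and $(P_0^*)^2=(P_0^2)^*=P_0^*$, so $P_0^*$ is again a bounded SI projection; moreover $\operatorname{Im}P_0^*=(\operatorname{Ker}P_0)^\perp=V_0^*$, and if $P_0$ has kernel $p(x,y)$ then $P_0^*$ has kernel $\ov{p(y,x)}$. Because the localization condition is symmetric in $p(x,y)$ and $p(y,x)$, $P_0$ has an $\omega$-localized kernel iff $P_0^*$ does, so everything proved for $(P_0,V_0)$ transfers to $(P_0^*,V_0^*)$; since a projection and its adjoint have equal rank, the two Riesz bases will share the same number $N$ of generators. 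For the direction (ii) $\Rightarrow$ (i), given biorthogonal Riesz bases $E^{\Zn}(\vp^1,\ldots,\vp^N)$ of $V_0$ and $E^{\Zn}(\vp^1_*,\ldots,\vp^N_*)$ of $V_0^*$, completeness and biorthogonality identify $P_0$ as the oblique projection
\[
P_0f=\sum_{j=1}^N\sum_{k\in\Zn}\lan f,T_k\vp^j_*\ran\,T_k\vp^j,
\qquad p(x,y)=\sum_{j,k}\vp^j(x-k)\ov{\vp^j_*(y-k)},
\]
and one estimates $\int_{[0,1]^n}\int_{\Rn}\omega(x-y)|p(x,y)|^2\,dy\,dx$ directly: submultiplicativity $\omega(x-y)\le C\omega(x-k)\omega(y-k)$ distributes the weight onto the two factors, and the convolution property of the Beurling weight (properties (iii)--(iv)) reduces finiteness to $\vp^j,\vp^j_*\in L^2(\omega)$, symmetrically for $p(y,x)$. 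This is the routine direction.

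The substance is (i) $\Rightarrow$ (ii). I would fiberize via $\mathcal T$ exactly as in the proof of Theorem \ref{wil}: since $P_0$ is SI it becomes a field of bounded projections $\xi\mapsto P(\xi)$ on $\ell^2(\Zn)$, with $V_0$ corresponding to the range function $J(\xi)=\operatorname{Im}P(\xi)$. The $\omega$-localization of the kernel says precisely that the Fourier coefficients of the operator-valued symbol $P(\cdot)$ are weighted-square-summable; because $\eta>1$ makes $1/\omega$ integrable (property (ii)), a Cauchy--Schwarz argument converts this into absolute ($\ell^1$) summability, so $\xi\mapsto P(\xi)$ is norm-continuous on $\Tn$. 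A norm-continuous projection-valued map on the connected space $\Tn$ has constant rank $N:=\operatorname{rank}P(\xi)$, hence $\dim J(\xi)\equiv N$ a.e.; by the standard theory of SI spaces this guarantees that $V_0$ admits a Riesz basis $E^{\Zn}(\vp^1,\ldots,\vp^N)$, so the only real point is to produce one whose generators lie in $L^2(\omega)$.

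To obtain decaying generators I would use candidates $\vp^j=P_0\theta^j$ with $\theta^j\in C_c^\infty(\Rn)$: localization of the kernel makes $\vp^j(x)=\int p(x,y)\theta^j(y)\,dy$ inherit the weighted decay, so $\vp^j\in L^2(\omega)$ automatically, while on the fiber side $\widehat{\vp^j}(\xi)=P(\xi)\widehat{\theta^j}(\xi)$ carries the regularity of the symbol. Using the constant rank $N$ and a local trivialization of $J$ over a finite cover of $\Tn$ patched measurably, one chooses $\theta^1,\ldots,\theta^N$ so that $\{P(\xi)\widehat{\theta^j}(\xi)\}_{j=1}^N$ is a Riesz basis of $J(\xi)$ uniformly in $\xi$, whence $E^{\Zn}(\vp^1,\ldots,\vp^N)$ is a Riesz basis of $V_0$. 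The same construction applied to $P_0^*$ yields $\vp^1_*,\ldots,\vp^N_*\in L^2(\omega)$ generating $V_0^*$. To upgrade to a biorthogonal pair, note the cross-Gram symbol $G(\xi)=\big(\lan\widehat{\vp^i},\widehat{\vp^j_*}\ran_{\ell^2}\big)_{i,j}$ is invertible a.e., and replacing $(\vp^j_*)$ by the family with fiber symbol $G(\xi)^{-*}$ enforces $\lan T_k\vp^j,T_{k'}\vp^{j'}_*\ran=\delta_{k,k'}\delta_{j,j'}$; the weighted decay survives because $G$ lies in the matrix-valued Beurling algebra of $\omega$, which is inverse-closed, so $G^{-1}$ has entries in the same algebra and the corrected $\vp^j_*$ remain in $L^2(\omega)$.

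The main obstacle is this last circle of ideas in (i) $\Rightarrow$ (ii): extracting from mere kernel localization a fiber symbol regular enough to yield simultaneously (a) constant rank $N$ and (b) generators with the exact weighted decay, and then preserving that decay under passage to the biorthogonal dual. Step (b) together with the biorthogonalization rests on inverse-closedness (a Wiener-type lemma) for the Beurling-weighted matrix algebra attached to $\omega$; this analytic input, rather than the topological obstructions that would block a \emph{continuous} selection, is the heart of the argument, and it is exactly what permits a biorthogonal---as opposed to orthonormal---pair to be manufactured with no loss of localization.
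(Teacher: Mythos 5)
First, a point of reference: the paper does not prove Theorem \ref{ipt} at all --- it is imported verbatim from Lemari\'e-Rieusset (\cite[Theorem 1 in Chapter 3, Part II]{KLR} and \cite[Th\'eor\`eme 1]{Lem2}), so there is no in-paper argument to compare yours against, and what follows is an assessment of your sketch on its own terms. Your adjoint symmetry ($P_0^*$ is an SI projection with image $V_0^*$ and kernel $\ov{p(y,x)}$) and your direction (ii) $\Rightarrow$ (i) are sound: the oblique-projection formula is forced by the prescribed image and kernel, and the weighted kernel estimate does close using the Beurling axioms (iii)--(iv). That is indeed the routine half.

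The gap is in (i) $\Rightarrow$ (ii), and it sits exactly where you write that one takes ``a local trivialization of $J$ over a finite cover of $\T^n$ patched measurably'' and then chooses $\theta^1,\ldots,\theta^N$ so that $\{P(\xi)\widehat{\theta^j}(\xi)\}_{j=1}^N$ is a uniform Riesz basis of $J(\xi)$. These two halves pull in opposite directions. A \emph{measurable} patching is always available (constant dimension function plus measurable Gram--Schmidt), but it yields generators with no decay, so membership in $L^2(\omega)$ is lost. A patching by finitely many globally defined smooth $\theta^j$ makes $\xi\mapsto P(\xi)\widehat{\theta^j}(\xi)$ continuous sections of the rank-$N$ bundle $\xi\mapsto \operatorname{Im}P(\xi)$ over $\T^n$, and demanding that they span \emph{every} fiber is precisely the assertion that this bundle is trivial. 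For $n\ge 2$, finite-rank subbundles of a trivial Hilbert bundle over $\T^n$ need not be trivial, so this is a theorem to be proved, not a formality; establishing it --- or otherwise manufacturing $N$ globally defined $\omega$-localized generators whose fibers span every $J(\xi)$ --- is the actual content of the hard direction, and your sketch hands it off in one clause. By contrast, the two ingredients you identify as the heart (norm-continuity of $P(\xi)$ from $\ell^1$-summability of its operator Fourier coefficients, and inverse-closedness of the weighted matrix algebra for the biorthogonalization) are the comparatively soft part; the latter is exactly the Wiener-lemma device the paper itself uses in Lemma \ref{irr}. One further omission: you invoke constancy of $\operatorname{rank}P(\xi)$ from continuity and connectedness, but finiteness of that rank is not free --- it comes from the localized-kernel hypothesis (since $\omega\ge 1/C$, the kernel is Hilbert--Schmidt on a fundamental domain, so a.e.\ fiber $P(\xi)$ is a Hilbert--Schmidt projection, hence of finite rank), and this needs to be said before the connectedness argument is run.
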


Furthermore, Lemari\'e-Rieusset showed the following result on the existence of MRAs associated with wavelet bases \cite[Th\'eor\`eme 3]{Lem2}.

\begin{theorem}\label{lr}
 Let $A$ be $n\times n$ real expansive matrix. Suppose that $\{\psi^1,\ldots, \psi^L\} \subset L^2(\R^n)$ and $\{\psi^1_*,\ldots, \psi^L_*\} \subset L^2(\R^n)$ is a pair of biorthogonal Riesz wavelets. That is, wavelet systems
\[
\{ D_{A^j} T_k \psi^l: j\in\Z, k\in \Z^n, l=1,\ldots,L\}
\]
and
\[
\{ D_{A^j} T_k \psi_*^l: j\in\Z, k\in \Z^n, l=1,\ldots,L\}
\]
are Riesz bases in $L^2(\R^n)$ satisfying biorthogonality relation
\[
\lan D_{A^j} T_k \psi^l, D_{A^{j'}} T_{k'} \psi^{l'}_* \ran =\delta_{j,j'} \delta_{k,k'} \delta_{l,l'}
\qquad
j, j'\in\Z, \ k, k' \in \Z^n,\  l,l'=1,\ldots,L.
\]
Assume that:
\begin{enumerate}[(i)]
\item there exists $\eta>1$ such that each $\psi^l$ and each $\psi^l_*$ belongs to $L^2((1+\rho_A)^\eta)$, and
\item the spaces of negatives dilates
\begin{equation}\label{lr1}
V_0= \ov{\spa}\{ D_{A^j} T_k \psi^l: j<0, k\in \Z^n, l=1,\ldots,L\}
\end{equation}
and
\begin{equation}\label{lr2}
V^*_0= \ov{\spa}\{ D_{A^j} T_k \psi^l_*: j<0, k\in \Z^n, l=1,\ldots,L\}
\end{equation}
are shift-invariant.
\end{enumerate}

Then, the operator
\begin{equation}\label{lr3}
P_0f = \sum_{l=1}^L \sum_{j<0} \sum_{k\in \Z^n} \lan f, D_{A^j} T_k \psi^l_* \ran D_{A^j} T_k \psi^l
\end{equation}
is a shift-invariant projection and $P_0$ is an integral operator with $(1+\rho_A)^\eta$-localized kernel.
\end{theorem}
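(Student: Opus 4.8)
The plan is to treat the two assertions separately: that $P_0$ is a shift-invariant projection, and that it is an integral operator with $\omega$-localized kernel for $\omega=(1+\rho_A)^\eta$. First I would record the algebraic structure of $P_0$. Since the wavelet system $\{D_{A^j}T_k\psi^l\}$ and its dual $\{D_{A^j}T_k\psi^l_*\}$ are biorthogonal Riesz bases of $L^2(\Rn)$, the analysis map $f\mapsto(\lan f,D_{A^j}T_k\psi^l_*\ran)$ and the corresponding synthesis map are bounded, so $P_0$ — which composes analysis, restriction of the coefficient sequence to $j<0$, and synthesis — is bounded. The biorthogonality relation shows at once that $P_0$ fixes every $D_{A^j}T_k\psi^l$ with $j<0$; hence $P_0$ is the identity on the Riesz basis of $V_0$ formed by these vectors, so $\operatorname{Im}P_0=V_0$ and $P_0^2=P_0$. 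Moreover $P_0f=0$ precisely when $\lan f,D_{A^j}T_k\psi^l_*\ran=0$ for all $j<0$, i.e. $\operatorname{Ker}P_0=(V_0^*)^\perp$; this matches the space $V_0^*$ in Theorem \ref{ipt}, since $(\operatorname{Ker}P_0)^\perp=V_0^*$.

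For shift-invariance I would argue purely at the level of range and kernel. By hypothesis $V_0$ and $V_0^*$ are shift-invariant, and since each $T_m$ is unitary, $(V_0^*)^\perp$ is shift-invariant as well. Thus $P_0$ is the oblique projection onto the $T_m$-invariant space $V_0$ along the $T_m$-invariant space $(V_0^*)^\perp$. For any invertible operator preserving both range and kernel, the conjugate $T_mP_0T_m^{-1}$ is again a projection with range $V_0$ and kernel $(V_0^*)^\perp$, hence equals $P_0$. This yields $T_mP_0=P_0T_m$ for all $m\in\Z^n$.

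The substantive part is the $\omega$-localization of the kernel. I would first isolate the single-scale building block: the oblique projection $R_jf=\sum_{l,k}\lan f,D_{A^j}T_k\psi^l_*\ran D_{A^j}T_k\psi^l$ onto the wavelet layer $W_j=\ov{\spa}\{D_{A^j}T_k\psi^l\}$, so that $P_0=\sum_{j<0}R_j$. For $j=0$ the spaces $W_0$ and $W_0^*=\ov{\spa}\{T_k\psi^l_*\}$ are finitely generated $\Z^n$-shift-invariant spaces whose generators $\{\psi^l\}$ and $\{\psi^l_*\}$ are biorthogonal Riesz bases of integer translates lying in $L^2(\omega)$; Theorem \ref{ipt} therefore applies and shows that $R_0$ has an $\omega$-localized kernel $r_0$. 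Since $R_j=D_{A^j}R_0D_{A^{-j}}$, its kernel is the rescaling $r_j(x,y)=|\det A|^{\,j}r_0(A^jx,A^jy)$, so the kernel of $P_0$ is $p=\sum_{j<0}r_j$.

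The main obstacle is summing these rescaled kernels, and here a naive triangle-inequality bound on $\sum_{j<0}|r_j|$ must fail. Using the homogeneity $\rho_A(A^jz)=|\det A|^{\,j}\rho_A(z)$, the scales $j\le -\log_{|\det A|}\rho_A(x-y)$ each contribute amplitude of order $|\det A|^{\,j}$ near the diagonal, giving $|p(x,y)|$ of order $\rho_A(x-y)^{-1}$; this is not $\omega$-integrable over $\rho_A$-annuli once $\eta>1$, because those annuli have volume growing like $\rho_A$. The missing decay must come from the cancellation intrinsic to $P_0=\sum_{j<0}R_j$ being a \emph{bounded} projection assembled from oblique, non-positive projections onto biorthogonal layers. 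I would capture this cancellation through the Coifman--Meyer lemma on anisotropic Sobolev spaces attached to the Beurling weight $\omega$: one reformulates $\omega$-localization of a shift-invariant operator as boundedness between the associated weighted Sobolev spaces, verifies that $P_0$ — a bounded shift-invariant projection whose generators are $L^2(\omega)$ molecules adapted to the dilation $A$ — is bounded there, and then transfers back to deduce the $\omega$-localization of $p$. This weighted-Sobolev reformulation, rather than any scale-by-scale estimate, is where the real difficulty lies.
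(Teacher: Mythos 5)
Your first half is sound. The boundedness of $P_0$ via analysis--restriction--synthesis, the identification $\operatorname{Im}P_0=V_0$, $\operatorname{Ker}P_0=(V_0^*)^\perp$, $P_0^2=P_0$, and the range-and-kernel argument for $T_mP_0=P_0T_m$ are all correct and complete. Note, however, that the paper does not prove Theorem \ref{lr} at all: it imports it from Lemari\'e-Rieusset \cite[Th\'eor\`eme 3]{Lem2} (with a remark adjusting the hypotheses), so the comparison here is really against that source, whose proof rests on the ``lemme des vaguelettes'' and the weighted spaces $H_\omega$.

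The genuine gap is in the second half, which is the entire analytic content of the theorem. You correctly set up $P_0=\sum_{j<0}R_j$, correctly obtain the $\omega$-localization of $r_0$ from Theorem \ref{ipt} applied to the single layer $W_0$ (whose generators $\psi^l$, $\psi^l_*$ are given in $L^2(\omega)$ by hypothesis), and correctly observe that the naive bound $|p(x,y)|\lesssim\rho_A(x-y)^{-1}$ obtained by summing $|r_j|$ is not $\omega$-square-integrable when $\eta>1$. But at that point the proof stops: ``I would capture this cancellation through the Coifman--Meyer lemma\dots one reformulates\dots verifies\dots and then transfers back'' is a plan, not an argument, and the step you label ``verifies that $P_0$ is bounded there'' is precisely the vaguelettes estimate that constitutes Lemari\'e-Rieusset's proof. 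Worse, the proposed ``transfer back'' flirts with circularity: the natural way to conclude that $P_0$ has an $\omega$-localized kernel from Theorem \ref{ipt} is via its condition (ii), i.e.\ the existence of biorthogonal Riesz generators of $V_0$ and $V_0^*$ lying in $L^2(\omega)$ --- but the existence of such generators for the space of negative dilates is not a hypothesis; it is exactly what the paper later \emph{extracts} from Theorem \ref{lr} combined with Theorem \ref{ipt} in the proof of Theorem \ref{mb}. So the kernel localization of $P_0$ must be proved directly (as in \cite{Lem2}, by almost-orthogonality/vaguelettes estimates on the off-diagonal interactions between the scales $R_j$), and that argument is missing from your proposal.
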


\begin{remark}
Note that the formulation of Theorem \ref{lr} differs slightly from \cite[Th\'eor\`eme 3]{Lem2} since we do not assume a priori that the dilation $A$ has integer entries, $A\Z^n\subset \Z^n$. For such dilations the spaces of negative dilates $V_0$ and $V_0^*$ are automatically SI. Instead, we make the assumption that $V_0$ and $V_0^*$ are both SI, as it is done in one dimensional variant of Theorem \ref{lr} in the book of Kahane and Lemari\'e-Rieusset \cite[Theorem 2 in Chapter 3, Part II]{KLR}. In addition, the assumption that there exists $\alpha>0$ such that each $\widehat{\psi^l}$ and $\widehat{\psi^l_*}$ belongs to $L^2((1+\rho_{A^\top})^\alpha)$ in \cite{Lem2} can be omitted (as it is made to satisfy hypotheses of the lemma of ``vaguelettes"). Moreover, since we do not assume that $A\Z^n\subset \Z^n$, we can not conclude that there is a relationship between the number $N$ of generators of the space $V_0$ and size $L$ of the wavelet, which takes the form $L=N(|\det A|-1)$. A similar result was also shown by Auscher \cite{Au3}.
\end{remark}

We are now ready to formulate the main result of this section, which is a higher dimensional generalization a result due to Lemari\'e-Rieusset  \cite[Theorem 2 in Chapter 3, Part II]{KLR}.

\begin{theorem}\label{mb}
Let $A$ be $n\times n$ real expansive matrix. Suppose that $\{\psi^1,\ldots, \psi^L\} \subset L^2(\R^n)$ and $\{\psi^1_*,\ldots, \psi^L_*\} \subset L^2(\R^n)$ is a pair of biorthogonal Riesz wavelets satisfying the assumptions of Theorem \ref{lr}. In particular, this assumption is met when there exists $(A,\Z^n)$-orthonormal wavelet $\{\psi^1,\ldots, \psi^L\} \subset L^2(\R^n)$ such that for some $\eta>1$ we have $\psi^l \in L^2((1+\rho_A)^\eta)$ for all $l=1,\ldots,L$, and the space of negative dilates \eqref{lr1} is SI. Then, $A$ is a rational matrix.
\end{theorem}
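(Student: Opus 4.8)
The plan is to run the invariant-projection machinery of Lemari\'e-Rieusset in reverse: first produce a shift-invariant projection with a localized kernel, then show that the space of negative dilates is invariant under a group strictly larger than $\Zn$, and finally argue that, in the presence of a localized kernel, such extra invariance is forced to be discrete, which pins $A$ down to a rational matrix.

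First I would apply Theorem \ref{lr}. Its hypotheses are met (in the orthonormal case one simply takes $\psi^l_*=\psi^l$), so the operator $P_0$ of \eqref{lr3} is a shift-invariant projection that is an integral operator with $(1+\rho_A)^\eta$-localized kernel, where $\omega:=(1+\rho_A)^\eta$, $\eta>1$, is a Beurling weight. Feeding $P_0$ into the invariant-projection Theorem \ref{ipt}, with $V_0=\operatorname{Im}P_0$ and $V_0^*=(\operatorname{Ker}P_0)^\perp$, produces Riesz bases $E^{\Zn}(\vp^1,\ldots,\vp^N)$ of $V_0$ and $E^{\Zn}(\vp^1_*,\ldots,\vp^N_*)$ of $V_0^*$ with every generator in $L^2(\omega)$. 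Thus $V_0$ is a finitely generated $\Zn$-shift-invariant space: its dimension function equals $N<\infty$ a.e., equivalently its spectral function (see \cite{BR1,BR2}) satisfies $\int_{\Rn}\sigma_{V_0}=N$, and since $\eta>1$ the decay $\vp^j\in L^2(\omega)$ makes each $\widehat{\vp^j}$, and hence $\sigma_{V_0}$, continuous. Moreover $V_0$ is nontrivial and proper, as $V_0=\bigoplus_{j<0}(V_{j+1}\ominus V_j)$.

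Next I would extract the extra invariance. Writing $V_1:=D_A V_0$, refinability gives $V_1=V_0\oplus W_0$ with $W_0=\ov{\spa}\,E^{\Zn}(\Psi)$; since both $V_0$ and $W_0$ are $\Zn$-shift-invariant, so is $V_1$. Because $V_0=D_{A^{-1}}V_1$ and $T_{Ak}D_A=D_A T_k$ for $k\in\Zn$, the $\Zn$-invariance of $V_1$ transfers to invariance of $V_0$ under $A\Zn$; thus $V_0$, and by the symmetric argument $V_0^*$, is invariant under $\Ga=\Zn+A\Zn$. This is the higher-dimensional form of the extra-invariance lemma of \cite{KLR} (compare \cite{ACP}). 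Note too that the invariance group $\{t\in\Rn:T_t V_0=V_0\}$ is closed, by strong continuity of translation and closedness of $V_0$. Now I run the dichotomy on $\Ga$: it is discrete precisely when $A$ is rational, since a common denominator $m$ gives $\Ga\subset\frac1m\Zn$, while conversely a discrete $\Ga$ is a lattice containing $\Zn$ with finite index, so $\Ga\subset\frac1N\Zn$ and $NA$ is an integer matrix. Assuming $A$ is not rational, $\Ga$ is non-discrete, so by the structure theorem for closed subgroups of $\Rn$ its closure contains a line $\R v$, $v\ne0$. Closedness of the invariance groups then makes $V_0$ and $V_0^*$ invariant under $T_{tv}$ for every $t\in\R$; as $\operatorname{Im}P_0=V_0$ and $\operatorname{Ker}P_0=(V_0^*)^\perp$ are both $T_{tv}$-invariant, $P_0$ commutes with the one-parameter group $\{T_{tv}\}_{t\in\R}$.

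The hard part is the final incompatibility, and this is where I expect the main obstacle to lie. Commuting with $\{T_{tv}\}_{t\in\R}$ means that $\widehat{V_0}$ is a module over multiplication by bounded functions of $\zeta:=v\cdot\xi$, so $P_0$ diagonalizes as a measurable field $\zeta\mapsto Q_\zeta$ of idempotents acting on the hyperplanes $\{v\cdot\xi=\zeta\}$. The Coifman-Meyer lemma on anisotropic Sobolev spaces attached to Beurling weights \cite{CM1} converts the $\omega$-localization of the kernel (with $\eta>1$) into continuity of this field in $\zeta$; combined with the spectral-function identities of \cite{BR1,BR2} and the extra-invariance analysis of \cite{ACP}, the integer-valued rank of $Q_\zeta$ is then continuous, hence constant, on the connected line $\zeta\in\R$. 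A constant value $0$ forces $V_0=\{0\}$, contradicting nontriviality. A constant value $\ge1$ means that $Q_\zeta\ne0$ for every $\zeta\in\R$, so infinitely many $\Zn$-translates $\zeta+v\cdot k$, $k\in\Zn$, meet the support of the field; a direct computation of the $\Zn$-fiber of $V_0$ then yields $\dim^{\Zn}_{V_0}\equiv\infty$, contradicting that $E^{\Zn}(\vp^1,\ldots,\vp^N)$ is a finite Riesz basis. Either way we reach a contradiction, so $A$ must be rational. The delicate point is precisely the transfer of kernel localization into continuity of the fiber field and then into this rigidity for the spectral function, since it is here that the anisotropic weight $(1+\rho_A)^\eta$ and the $n$-dimensional extra-invariance theory must be woven together.
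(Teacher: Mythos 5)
Your first two paragraphs track the paper's proof exactly: Theorem \ref{lr} produces the localized shift-invariant projection $P_0$, Theorem \ref{ipt} gives a Riesz basis $E^{\Zn}(\vp^1,\ldots,\vp^N)$ of $V_0$ with generators in $L^2(\omega)$, the commutation $T_{Ak}D_A=D_AT_k$ upgrades $\Zn$-invariance of $V_0$ to $(\Zn+A\Zn)$-invariance (the paper's Lemma \ref{exp}), and the reduction ``$A$ irrational $\Rightarrow\ \ov{\Zn+A\Zn}$ contains a nontrivial subspace $\Lambda$, so $V_0$ is invariant under all real translates in $\Lambda$'' is precisely how the paper's Lemma \ref{irr} begins. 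The problem is the final paragraph, which you yourself identify as the crux, and the gaps there are genuine rather than cosmetic.

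Three things are missing. (1) The ``integer-valued rank of $Q_\zeta$'' is not a usable invariant: $Q_\zeta$ acts on $L^2$ of a hyperplane and its range is the fiber of $V_0$ in the $v$-direction only, which is generically infinite-dimensional (already for $V_0=\check L^2(E)$ the slice projections have rank $0$ or $\infty$). The finite integer one actually needs is the fiber dimension of $V_0$ with respect to the full non-discrete group $\Zn+\Lambda$, namely $\sum_{j=1}^N\sum_{k\in\Zn\cap\Lambda^\perp}|\widehat{\vp^j}(\xi+k)|^2$, and proving this is a.e.\ integer-valued is itself a nontrivial lemma (the paper's Lemma \ref{esi}): one approximates $\Lambda$-invariance by invariance under the refining lattices $\Gamma_j=\Zn+2^{-j}(\Zn\cap\Lambda)$, shows each $\dim^{\Gamma_j}_{V_0}$ is integer-valued via the spectral function, and takes a monotone limit. (2) That formula, and the normalization $\sum_{j}\sum_{k\in\Zn}|\widehat{\vp^j}(\xi+k)|^2=N$ against which the contradiction is run, presuppose an orthonormal (or Parseval) system of generators, whereas Theorem \ref{ipt} only delivers a Riesz basis; one must first orthonormalize while keeping the generators in $L^2(\omega)$, which uses the inverse-closedness of the algebra $K_\omega$ and the integral formula $M^{-1/2}=\tfrac2\pi\int_0^\infty(\mathbf I+t^2M)^{-1}\,dt$ for the Gramian. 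Your sketch omits this step entirely. (3) Continuity of the displayed sum does not follow from merely citing Coifman--Meyer; one needs the quantitative estimate of the paper's Lemma \ref{cm} (that $\sum_{k\in\Zn}\|\widehat{\vp^j}\,T_kg\|_{H_\omega}^2$ is controlled by $\|\widehat{\vp^j}\|_{H_\omega}$ for a compactly supported smooth $g$) to obtain uniform convergence on compact sets. Once these are supplied, your endgame coincides with the paper's: the subgroup sum is a constant $m$, and summing over the infinitely many cosets of $\Zn\cap\Lambda^\perp$ in $\Zn$ contradicts finiteness of $N$ whether $m=0$ or $m\ge1$.
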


To prove Theorem \ref{mb} we need to employ a series of lemmas involving function spaces associated with Beurling weights \cite{Lem2}.

\begin{definition}
For a Beurling weight $\omega$, we define an anisotropic Sobolev space $H_\omega$ as
\[
H_\omega= \{ f\in L^2(\R^n): \exists g \in L^2(\omega) \ f=\widehat g \},
\qquad
||f||_{H_\omega} = ||g||_{L^2(\omega)}.
\]
Define the periodic analogue of the space $H_\omega$ by
\[
K_{\omega} =\bigg \{m\in L^2(\T^n): m = \sum_{k\in \Z^n} a_k e^{-2\pi i \lan k, \xi \ran}, ||m||_{K_\omega} = \bigg(\sum_{k\in \Z^n} |a_k|^2 \omega(k) \bigg)^{1/2}<\infty \bigg\}.
\]
\end{definition}

Observe that $H_\omega \subset C_0(\R^n)$ and
\[
||f||_\infty \le C ||f||_{H_\omega} \qquad\text{for }f\in H_\omega.
\]
Likewise, $K_\omega \subset C(\T^n)$ and 
\[
||m||_\infty \le C ||m||_{K_\omega} \qquad\text{for }m \in K_\omega.
\]
We will use two facts about $H_\omega$ and $K_\omega$ from \cite[Proposition 5]{Lem2}. First, the space $K_\omega$ is an inverse closed subalgebra of the Wiener algebra 
\[
A(\T^n)= \bigg \{m\in L^2(\T^n): m = \sum_{k\in \Z^n} a_k e^{-2\pi i \lan k, \xi \ran}, ||m||_{A(\T^n)} = \sum_{k\in \Z^n} |a_k|<\infty \bigg\}
\]
with pointwise multiplication. Second, functions in $m\in K_\omega$ define multipliers in $H_\omega$. That is, there exists a constant $C>0$ such that
\begin{equation}\label{muh}
|| m f||_{H_\omega} \le C ||m||_{H_\omega} ||f||_{H_\omega}.
\end{equation}
\

We need the following crucial lemma due to Coifman and Meyer \cite[Lemme 2]{CM1}.

\begin{lemma}\label{CM}
Let $K\subset \R^n$ be a compact set satisfying $(K-K) \cap \Z^n =\{0\}$. For a Beurling weight $\omega$, there exists constants $C_1, C_2>0$, such that for any sequence $(f_k)_{k\in \Z^n}$ of $C^\infty$ functions supported in $K$, we have
\begin{equation}\label{CM0}
C_1 \| F \|_{H_\omega} \le \bigg( \sum_{k\in\Z^n} ||f_k||^2_{H_\omega} \bigg)^{1/2} \le C_2 ||F||_{H_\omega}, 
\qquad\text{where }F(x)=\sum_{k\in \Z^n} f_k(x-k).
\end{equation}.
\end{lemma}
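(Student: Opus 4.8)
The plan is to exploit the packing hypothesis $(K-K)\cap\Z^n=\{0\}$, which forces the translated supports $K+k$, $k\in\Z^n$, to be pairwise disjoint, so that $F=\sum_k f_k(\cdot-k)$ determines each $f_k$ and the whole question reduces to a Littlewood--Paley type square function estimate. First I would fix a cutoff $\chi\in C_c^\infty(\Rn)$ with $\chi\equiv 1$ on $K$ and $\supp\chi=K'$, where $K'$ is a slightly enlarged compact set still satisfying $(K'-K')\cap\Z^n=\{0\}$ (possible since $K$ is compact and $K-K$ meets $\Z^n$ only at $0$). Disjointness of supports yields the pointwise identities $T_kf_k=(T_k\chi)F$ and $F=\sum_k(T_k\chi)F$, whence $\|f_k\|_{H_\omega}=\|(T_k\chi)F\|_{H_\omega}$. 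Thus \eqref{CM0} is equivalent to the norm equivalence
\[
C_1^2\,\|F\|_{H_\omega}^2\le\sum_{k\in\Zn}\|(T_k\chi)F\|_{H_\omega}^2\le C_2^2\,\|F\|_{H_\omega}^2 ,
\]
whose two inequalities I would establish by separate arguments.

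For the right-hand (analysis) inequality I would pass to the Fourier side and use Plancherel over the lattice $\Z^n$. Writing $\widehat{(T_k\chi)F}(\xi)=\int_{\Rn}e^{-2\pi i\lan k,\eta\ran}\widehat\chi(\eta)\widehat F(\xi-\eta)\,d\eta$ as the $k$-th lattice Fourier coefficient of $\eta\mapsto\widehat\chi(\eta)\widehat F(\xi-\eta)$, Parseval on $\Z^n$ gives
\[
\sum_{k\in\Zn}\big|\widehat{(T_k\chi)F}(\xi)\big|^2=\int_{\Tn}\Big|\sum_{l\in\Zn}\widehat\chi(\eta+l)\widehat F(\xi-\eta-l)\Big|^2 d\eta .
\]
Since $\widehat\chi$ is Schwartz one has $\sum_l|\widehat\chi(\eta+l)|\le C$ uniformly; a Cauchy--Schwarz step, Fubini, the change of variables $\zeta=\xi-\eta$, and the submultiplicativity $\omega(\zeta+\eta)\le C\omega(\zeta)\omega(\eta)$ then yield
\[
\sum_{k\in\Zn}\|(T_k\chi)F\|_{H_\omega}^2\le C\Big(\int_{\Rn}|\widehat\chi(\eta)|\,\omega(\eta)\,d\eta\Big)\,\|F\|_{H_\omega}^2 ,
\]
the $\eta$-integral being finite because $\widehat\chi$ is Schwartz and $\omega$ has polynomial growth. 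This is the right-hand inequality, and it holds for every $F\in H_\omega$.

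For the left-hand (synthesis) inequality I would argue by almost orthogonality. Expanding $\|F\|_{H_\omega}^2=\sum_{k,k'}\lan T_kf_k,T_{k'}f_{k'}\ran_{H_\omega}$ and using that $\widehat f_k\,\ov{\widehat f_{k'}}$ is the Fourier transform of $g_{k,k'}:=f_k*\widetilde{f_{k'}}$, supported in the fixed compact set $K-K$, I obtain the physical-side Gram representation
\[
\lan T_kf_k,T_{k'}f_{k'}\ran_{H_\omega}=\int_{K}\int_{K}f_k(y)\,\ov{f_{k'}(x)}\,\check\omega\big((k-k')+y-x\big)\,dx\,dy ,
\]
where $\check\omega$ is the (distributional) inverse Fourier transform of the weight. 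The diagonal terms $k=k'$ reproduce exactly $\sum_k\|f_k\|_{H_\omega}^2$, the desired main term. For $k\ne k'$ the argument of $\check\omega$ ranges over $(k-k')+(K-K)$, which avoids the origin precisely because $(K-K)\cap\Z^n=\{0\}$; there $\check\omega$ agrees with a genuine function, smooth since $\omega\in C^\infty(\Rn\setminus\{0\})$. Bounding $|f_k|,|f_{k'}|$ in $L^1(K)$ through $\|\cdot\|_{L^2}\le C\|\cdot\|_{H_\omega}$ (using $\omega\ge 1/C$) and applying a Schur--Young estimate, the off-diagonal contribution is controlled by $\big(\sum_{m\ne 0}\sup_{z\in m+(K-K)}|\check\omega(z)|\big)\sum_k\|f_k\|_{H_\omega}^2$, which finishes the proof once this kernel sum is shown to be finite.

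The main obstacle is exactly this last point: the summable off-diagonal decay $\sum_{m\in\Zn\setminus\{0\}}\sup_{z\in m+(K-K)}|\check\omega(z)|<\infty$. This is the genuine analytic content of the Coifman--Meyer lemma, and it is where the full force of the Beurling-weight axioms enters --- the integrability $\int_{\Rn}\omega^{-1}<\infty$ together with the convolution and submultiplicativity bounds controlling $\check\omega$ away from its singularity at the origin, and the smoothness of $\omega$ on $\Rn\setminus\{0\}$. Everything else (the reduction and the analysis inequality) is soft; the decay estimate for $\check\omega$ is the crux and must be carried out carefully, since the achievable rate is dictated by the behaviour of $\omega$ near $\xi=0$.
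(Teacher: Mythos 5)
The first thing to note is that the paper contains no proof of Lemma \ref{CM} to compare against: it is imported verbatim from Coifman--Meyer \cite[Lemme 2]{CM1}, so your argument has to stand on its own. On its own terms, two of your three steps are correct and complete. The reduction is fine: compactness of $K-K$ does give an enlarged compact $K'$ with $(K'-K')\cap\Z^n=\{0\}$, the sets $K'+k$ meet $K+k'$ only when $k=k'$, hence $T_kf_k=(T_k\chi)F$ and $\|f_k\|_{H_\omega}=\|(T_k\chi)F\|_{H_\omega}$. Your proof of the analysis (right-hand) inequality in \eqref{CM0} is also complete: Parseval over $\Z^n$ applied to the periodization of $\eta\mapsto\widehat\chi(\eta)\widehat F(\xi-\eta)$, Cauchy--Schwarz against the envelope $\sum_l|\widehat\chi(\eta+l)|\le C$, and the submultiplicativity axiom (iv) give the bound with constant $C\int_{\R^n}|\widehat\chi|\,\omega<\infty$. (A minor normalization caveat: $\|\cdot\|_{H_\omega}$ is defined via the inverse transform, so passing to $\widehat{(T_k\chi)F}$ tacitly uses symmetry of $\omega$; this is harmless for the weights $(1+\rho_A)^\eta$ actually used in the paper.)

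The genuine gap is the synthesis (left-hand) inequality, and it is not just deferred bookkeeping. Your scheme stands or falls with the estimate $\sum_{m\ne0}\sup_{z\in m+(K-K)}|\check\omega(z)|<\infty$, which you do not prove --- and which cannot be proved at the stated level of generality. The Beurling axioms (i)--(iv) impose no regularity on $\omega$: if $\theta$ is any measurable function with $1\le\theta\le 2$, then $\theta\omega$ is again a Beurling weight, and all $H_{\theta\omega}$-norms differ from the $H_\omega$-norms by at most a factor $\sqrt2$, so the lemma for $\theta\omega$ is equivalent to the lemma for $\omega$; yet for wild $\theta$ the distribution $\check{(\theta\omega)}$ need not be a function anywhere off the origin, and your kernel quantity becomes meaningless. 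So what you call the crux is an artifact of the chosen route rather than the true mechanism, and even for the concrete weights $(1+\rho_A)^\eta$ the decay of $\check\omega$ away from $0$ (limited by the mere continuity of $\rho_A$ at the origin) is a nontrivial anisotropic Fourier estimate you would still owe.

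The inequality in fact has a soft proof using only the axioms, mirroring your own analysis argument. Put $g_k=\check f_k$ and $\Phi(x,\theta)=\sum_k e^{2\pi i\lan k,\theta\ran}g_k(x)$, so that $\check F(x)=\Phi(x,x)$ and $\int_{\T^n}|\Phi(x,\theta)|^2\,d\theta=\sum_k|g_k(x)|^2$. For each fixed $\theta$ the function $x\mapsto\Phi(x,\theta)$ has Fourier transform supported in $K$, hence reproduces under convolution with $\check\chi$, giving
\[
|\Phi(x,x)|^2\le \|\check\chi\|_{L^1}\int_{\R^n}|\Phi(x',x)|^2\,|\check\chi(x-x')|\,dx'.
\]
Multiplying by $\omega(x)$, integrating, substituting $x=x'+z$, and using $\omega(x'+z)\le C\omega(x')\omega(z)$, the inner integral pairs the $\Z^n$-periodic function $z\mapsto|\Phi(x',x'+z)|^2$ against $\omega|\check\chi|$, whose $\Z^n$-periodization is uniformly bounded because $\check\chi$ is Schwartz and $\omega$ grows polynomially; integrating in $x'$ yields $\|F\|_{H_\omega}^2\le C'\sum_k\|f_k\|_{H_\omega}^2$. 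Note that this direction never uses $(K-K)\cap\Z^n=\{0\}$; the separation hypothesis enters only where you correctly used it, namely in recovering the $f_k$ from $F$ for the analysis inequality.
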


As a consequence of Lemma \ref{CM} we deduce the following result, see also \cite[Lemma 6 in Section 2.4]{Mey}.

\begin{lemma}\label{cm}
Let $f\in H_\omega$ and $g\in C^\infty(\R^n)$ be compactly supported. Then,
\begin{equation}\label{cm1}
\sum_{k\in \Z^n} ||f T_k g ||_{H_\omega}^2 \le C ||f||_{H_\omega}.
\end{equation}
\end{lemma}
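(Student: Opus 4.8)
The plan is to reduce the whole estimate, through a partition of unity and the translation invariance of $H_\omega$, to a single application of Lemma \ref{CM} followed by the multiplier bound \eqref{muh}. First I would record the elementary fact that $H_\omega$ is translation invariant with $\|T_m h\|_{H_\omega}=\|h\|_{H_\omega}$ for every $m\in\R^n$: writing $h=\widehat{\ell}$ with $\ell\in L^2(\omega)$, translation of $h$ corresponds to multiplying $\ell$ by a unimodular character, which preserves the $L^2(\omega)$ norm.

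Next I would reduce to the case of small support. Since $g\in C_c^\infty(\R^n)$, I write $g=\sum_{i=1}^I g_i$ with $g_i=\chi_i g$, where $\{\chi_i\}_{i=1}^I$ is a finite smooth partition of unity subordinate to a cover of $\supp g$ by open cubes of $\ell^\infty$-side $<1$. Then $K_i:=\supp g_i$ has $\ell^\infty$-diameter $<1$, so $(K_i-K_i)\cap\Z^n=\{0\}$. Using the triangle inequality and $(\sum_{i=1}^I a_i)^2\le I\sum_i a_i^2$,
\[
\sum_{k\in\Z^n}\|fT_kg\|_{H_\omega}^2 \le I\sum_{i=1}^I\sum_{k\in\Z^n}\|fT_kg_i\|_{H_\omega}^2,
\]
so it suffices to prove the bound for a single $C_c^\infty$ function $g$ whose support $K$ satisfies $(K-K)\cap\Z^n=\{0\}$.

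For such $g$ I set $h_k(x)=f(x+k)g(x)=T_{-k}(fT_kg)(x)$, which is supported in the fixed set $K$; by the translation invariance above $\|fT_kg\|_{H_\omega}=\|h_k\|_{H_\omega}$. The key observation is that the periodized function rearranges into a product:
\[
F(x):=\sum_{k\in\Z^n}h_k(x-k)=f(x)\sum_{k\in\Z^n}g(x-k)=f(x)G(x),
\]
where $G$ is the $\Z^n$-periodization of $g$. Since $g\in C_c^\infty$, the Fourier coefficients of $G$ are $\widehat g(m)$, which decay faster than any polynomial, so $\sum_{m}|\widehat g(m)|^2\omega(m)<\infty$ and hence $G\in K_\omega$. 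Applying the right-hand inequality of \eqref{CM0} in Lemma \ref{CM} to the family $(h_k)$ supported in $K$, and then the multiplier bound \eqref{muh},
\[
\sum_{k\in\Z^n}\|fT_kg\|_{H_\omega}^2=\sum_{k\in\Z^n}\|h_k\|_{H_\omega}^2\le C_2^2\|F\|_{H_\omega}^2=C_2^2\|fG\|_{H_\omega}^2\le C\|G\|_{K_\omega}^2\|f\|_{H_\omega}^2,
\]
which yields \eqref{cm1} (with the natural homogeneous right-hand side $C\|f\|_{H_\omega}^2$).

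The one delicate point, and the step I expect to require the most care, is that Lemma \ref{CM} is stated for $C^\infty$ families $(f_k)$, whereas here $h_k(x)=f(x+k)g(x)$ is only as smooth as $f\in H_\omega\subset C_0(\R^n)$. I would close this gap by density: Schwartz functions are dense in $H_\omega$, being the Fourier images of $C_c^\infty\subset L^2(\omega)$; for smooth $f$ each $h_k$ is genuinely $C^\infty$ and supported in $K$, so Lemma \ref{CM} applies verbatim, and the a priori estimate just derived shows that $f\mapsto(fT_kg)_{k}$ is bounded from $H_\omega$ into $\ell^2(\Z^n;H_\omega)$ on this dense subspace. Extending by continuity, and identifying the limit fiberwise via the boundedness on $H_\omega$ of multiplication by the fixed Schwartz function $T_kg$ (equivalently, convolution on the Fourier side by $\widehat{T_kg}$, which is integrable against $\omega$ because $\omega$ has at most polynomial growth), gives \eqref{cm1} for all $f\in H_\omega$.
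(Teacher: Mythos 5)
Your proof is correct and follows essentially the same route as the paper's: the same partition of $g$ into finitely many pieces $g_i$ with $(K_i-K_i)\cap\Z^n=\{0\}$, the same identification of the periodization $\sum_{k}h_k(\cdot-k)$ with the product $fG$, and the same application of Lemma \ref{CM} followed by the multiplier bound \eqref{muh}. The only substantive difference is that you explicitly patch the $C^\infty$ hypothesis of Lemma \ref{CM} by a density argument (the paper applies the lemma to $T_{-k}f\,g_j$ without comment, even though $f\in H_\omega$ is merely continuous), and you are also right that the homogeneous right-hand side of \eqref{cm1} should read $C\|f\|_{H_\omega}^2$.
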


\begin{proof}
Since $g$ is compactly supported, there exists a finite collection $(g_j)_{j=1}^N$ of compactly supported $C^\infty$ functions such that
\begin{align*}
\sum_{j=1}^N g_j &= g
\\
(K_j-K_j) \cap \Z^n &=\{0\} \qquad\text{where } K_j = \supp g_j.
\end{align*}
Fix $j=1,\ldots, N$. Applying \eqref{muh} and Lemma \ref{CM} for functions $f_k = T_{-k}f g_j$ yields
\[
\bigg(\sum_{k\in \Z^n} ||f T_k g_j ||_{H_\omega}^2\bigg)^{1/2}
= \bigg(\sum_{k\in \Z^n} ||T_{-k} f g_j ||_{H_\omega}^2 \bigg)^{1/2} \le C_2 \| f  m \|_{H_\omega}
\le C_2 ||m||_{K_\omega} ||f||_{H_\omega},
\]
where $m= \sum_{k\in\Z^n} T_k g_j$ is a $C^\infty$ and $\Z^n$-periodic function. Hence, $m\in K_\omega$ and we obtain \eqref{cm1} for $g_j$ and some constant $C_j$. This yields the required bound for $g$ since
\[
\sum_{k\in \Z^n} ||f T_k g ||_{H_\omega}^2 \le \sum_{k\in \Z^n} \bigg( \sum_{j=1}^N ||f T_k g_j ||_{H_\omega} \bigg)^2
\le \sum_{k\in \Z^n}  \sum_{j=1}^N N ||f T_k g_j ||^2_{H_\omega} \le \sum_{j=1}^N C_jN  ||f||_{H_\omega}.
\qedhere
\]
\end{proof}

We also need the following lemma about extra invariant SI spaces. For the study of SI spaces in $L^2(\R^n)$ with extra invariance, see \cite{ACP}.

\begin{lemma} \label{esi}
Let $V_0 \subset L^2(\R^n)$ be a finitely generated $\Z^n$-SI space. Let $\Phi \subset L^2(\R^n)$ be such that $E^{\Z^n}(\Phi)$ is a Parseval frame of $V_0$. Define 
\begin{equation}\label{esi1}
\Lambda = \{y \in \R^n: T_{ty}(V_0)  \subset V_0 \quad\text{for all }t\in \R \}.
\end{equation}
Then,
\begin{equation}\label{esi2}
\sum_{\vp\in\Phi} \sum_{k\in \Z^n \cap \Lambda^\perp} |\widehat \vp(\xi+k)|^2 \in \N\cup\{0\} \qquad\text{for a.e. }\xi \in \R^n.
\end{equation}
\end{lemma}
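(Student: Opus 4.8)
The plan is to interpret the restricted periodization in \eqref{esi2} as the dimension function of $V_0$ with respect to its maximal extra invariance, and then to invoke that fibre dimensions are integer valued.

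First I would record the structure of $\Lambda$. Since $T_{t(y+y')}=T_{ty}T_{ty'}$ and each $T_{ty}$ is unitary with $V_0$ closed, $\Lambda$ is a linear subspace of $\R^n$ and $T_{ty}(V_0)=V_0$ for $y\in\Lambda$, $t\in\R$. By hypothesis $V_0$ is also $\Z^n$-invariant, hence invariant under $\Z^n+\Lambda$, and by strong continuity of translations together with closedness of $V_0$ under its closure $G:=\ov{\Z^n+\Lambda}$. Writing $\pi$ for the orthogonal projection onto $\Lambda^\perp$, one has $\Z^n+\Lambda=\Lambda+\pi(\Z^n)$, so $G=\Lambda\oplus\ov{\pi(\Z^n)}$. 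If $\ov{\pi(\Z^n)}$ contained a nonzero subspace $W$, then $V_0$ would be invariant under $\Lambda\oplus W\supsetneq\Lambda$, contradicting the maximality of $\Lambda$; hence $L:=\pi(\Z^n)$ is a full rank lattice in $\Lambda^\perp$ and $G=\Lambda\oplus L$. A direct computation then shows that the annihilator $G^*=\{\xi:\lan g,\xi\ran\in\Z \text{ for all } g\in G\}$ equals the dual lattice $L^*$ inside $\Lambda^\perp$, and that $L^*=\Z^n\cap\Lambda^\perp=:D$, which is therefore a full rank lattice in $\Lambda^\perp$ — exactly the index set appearing in \eqref{esi2}.

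Next I would use the extra invariance to fibre $V_0$. In the Fourier domain, with coordinates $\xi=(\eta,\zeta)\in\Lambda\oplus\Lambda^\perp$, the full invariance of $V_0$ along $\Lambda$ means that $\widehat{V_0}$ is invariant under multiplication by every character $e^{2\pi i\lan y,\eta\ran}$, $y\in\Lambda$; by the (continuous analogue of the) range function theory of multiplication invariant spaces this forces a direct integral decomposition $\widehat{V_0}=\int^\oplus_\Lambda \mathcal V(\eta)\,d\eta$ with $\mathcal V(\eta)\subset L^2(\Lambda^\perp)$ measurable. The additional $L$-invariance makes each $\mathcal V(\eta)$ an $L$-SI subspace of $L^2(\Lambda^\perp)$, so it carries a range function $\mathcal J(\eta,\cdot)$ with fibres in $\ell^2(D)$; this is the content I would extract from the extra-invariance structure theory of \cite{ACP}. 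Finite generation of $V_0$ bounds $\dim\mathcal J(\eta,\zeta)\le N$ a.e., so the fibre dimension is a finite nonnegative integer a.e.

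Finally I would identify the two quantities. For the full rank lattice $D$ in $\Lambda^\perp$, the spectral function identity applied to the slice space $\mathcal V(\eta)$ gives $\dim\mathcal J(\eta,\zeta)=\sum_{d\in D}\sigma_{\mathcal V(\eta)}(\zeta+d)$, while the intrinsic, lattice independent description of the spectral function from \cite{BR1,BR2} yields its sliceability, $\sigma_{\mathcal V(\eta)}(\zeta)=\sigma_{V_0}(\eta,\zeta)$ for a.e.\ $(\eta,\zeta)$. Since $\sigma_{V_0}(\xi)=\sum_{\vp\in\Phi}|\widehat\vp(\xi)|^2$ and $D=\Z^n\cap\Lambda^\perp$, combining these gives
\[
\sum_{\vp\in\Phi}\sum_{k\in\Z^n\cap\Lambda^\perp}|\widehat\vp(\xi+k)|^2=\sum_{d\in D}\sigma_{V_0}(\eta,\zeta+d)=\dim\mathcal J(\eta,\zeta)\in\N\cup\{0\}\qquad\text{a.e.},
\]
which is \eqref{esi2}. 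The main obstacle is precisely this last identification: making the direct integral over the continuous variable $\eta$ rigorous and proving that the globally defined spectral function restricts (slices) correctly to each $\mathcal V(\eta)$, so that the partial periodization over $D$ becomes the honest $G$-fibre dimension. The maximality argument in the first step, which upgrades $\Lambda$ to the rational subspace making $D$ full rank, also needs to be handled with care.
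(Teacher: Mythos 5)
Your route is genuinely different from the paper's, and it has a gap exactly where you say it does. The first step is fine: $\Lambda$ is a subspace, $V_0$ is invariant under $G=\ov{\Z^n+\Lambda}=\Lambda\oplus L$ with $L$ (the orthogonal projection of $\Z^n$ onto $\Lambda^\perp$) a full rank lattice in $\Lambda^\perp$ by maximality of $\Lambda$, and $G^*=\Z^n\cap\Lambda^\perp=:D$ is a full rank lattice in $\Lambda^\perp$. But the entire content of the lemma then rests on two assertions you do not prove: (a) that a $G$-invariant space admits a measurable direct-integral decomposition $\widehat{V_0}=\int_\Lambda^\oplus\mathcal V(\eta)\,d\eta$ whose fibres are $L$-SI spaces with integer-valued range-function dimension, and (b) that the spectral function slices, $\sigma_{\mathcal V(\eta)}(\zeta)=\sigma_{V_0}(\eta,\zeta)$ a.e., so that the partial periodization over $D$ really computes that fibre dimension. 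Neither statement is supplied by the references you lean on: \cite{ACP} treats extra invariance but not spectral-function slicing, and \cite{BR1,BR2} work over full rank lattices only. These facts are plausible (they belong to the theory of translation-invariant spaces under closed co-compact subgroups), but establishing them is at least as hard as the lemma itself, so as written the proposal is a reduction rather than a proof.

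The paper avoids the continuous fibering altogether by discretizing the extra invariance. Since $\Lambda'=\Z^n\cap\Lambda$ is a lattice of full rank $r=\dim\Lambda$ in $\Lambda$ (this is where \cite[Theorem 3.9]{ACP} is actually used), one forms the increasing chain of full rank lattices $\Gamma_j=\Z^n+2^{-j}\Lambda'$. Because $V_0$ is invariant under every translate in $2^{-j}\Lambda'$, the system $2^{-rj/2}E^{\Gamma_j}(\Phi)$ is a Parseval frame of $V_0$, and the lattice theory of \cite{BR2} gives $\dim_{V_0}^{\Gamma_j}(\xi)=\sum_{\vp\in\Phi}\sum_{k\in\Gamma_j^*}|\widehat\vp(\xi+k)|^2\in\N\cup\{0\}$ a.e. The dual lattices $\Gamma_j^*$ decrease with intersection $\Z^n\cap\Lambda^\perp$, so these integer-valued periodizations decrease pointwise a.e.\ (the $j=0$ term being finite a.e.\ by finite generation) to the left-hand side of \eqref{esi2}, and a nonincreasing sequence of nonnegative integers is eventually constant. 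If you want to salvage your approach you must prove (a) and (b) above; otherwise the lattice-approximation argument is both shorter and stays entirely within the cited machinery.
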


\begin{proof}
If $\Lambda=\{0\}$, then the expression in \eqref{esi2} represents the dimension function $\dim_{V_0}$ of the space $V_0$ by \cite[Lemma 2.3]{BR1}. Since $V_0$ is finitely generated, we have $\dim_{V_0}(\xi) \in \N\cup\{0\}$ for a.e. $\xi$. Note that $\Lambda$ is the largest subspace of $\R^n$ such that the space $V_0$ is invariant under translates in $\Lambda$. By the characterization of closed subgroups of $\R^n$ containing $\Z^n$, see \cite[Theorem 3.9]{ACP}, the set $\Lambda' = \Z^n \cap \Lambda$ is a lattice of rank $r=\dim \Lambda$. 

For $j\in \N_0$, consider a full rank lattice $\Gamma_j=\Z^n+2^{-j} \Lambda'$. The quotient group $\Gamma_j/\Z^n$ is isomorphic with the group $(2^{-j}\Lambda')/\Lambda'$, which has order $2^{rj}$. Since $\Gamma_0=\Z^n$, we have
\[
\vol(\Gamma_j) = 2^{-rj} \vol(\Gamma_0) = 2^{-rj}.
\]
For any $y\in 2^{-j}\Lambda'$, the collection $E^{y+\Z^n}(\Phi)= E^{\Z^n}(T_y \Phi)$ is a Parseval frame of $T_y(V_0)=V_0$. Hence, $E^{\Gamma_j}(\Phi)$ is a tight frame with constant $2^{jr}$ of the space $V_0$. Equivalently, $2^{-rj/2} E^{\Gamma_j}(\Phi)$ is a Parseval frame of $V_0$. Let $\sigma_{V_0}$ be the spectral function of the space $V_0$, see \cite{BR1, BR2}. By \cite[(2.3) and Corollary 2.7]{BR2}, the dimension function of $\Ga_j$-SI space $V_0$ satisfies
\begin{equation}\label{esi3}
\begin{aligned}
\dim_{V_0}^{\Gamma_j}(\xi) = \sum_{k\in \Gamma_j^*} \sigma_{V_0}(\xi+k) 
& = \frac{2^{-rj}}{\vol(\Gamma_j)} \sum_{\vp\in \Phi} \sum_{k\in \Gamma_j^*} |\widehat \vp(\xi+k)|^2
\\
& = \sum_{\vp\in \Phi} \sum_{k\in \Gamma_j^*} |\widehat \vp(\xi+k)|^2 \in \N \cup\{0\} \qquad\text{for a.e. }\xi.
\end{aligned}
\end{equation}
Since $\Gamma_j \subset \Gamma_{j+1}$, we have $\Gamma_{j+1}^* \subset \Gamma_j^*$ for all $j\in \N$. 
Moreover, $\overline{\bigcup_{j\in \N} \Gamma_j }= \Z^n + \Lambda$ implies that $\bigcap_{j\in \N} \Gamma_j^* = \Z^n \cap \Lambda^\perp$. Thus, by \eqref{esi3}, $(\dim_{V_0}^{\Gamma_j})_{\in \N}$ is a monotone sequence converging to the expression in \eqref{esi2}, which also has values in $\N \cup\{0\}$.
\end{proof}

We need an extension of the extra invariance lemma \cite[Lemma 6 in Chapter 3, Part II]{KLR} to higher dimensions. While Lemma \ref{irr} holds for general Beurling weights as in \cite{Lem2}, we shall need it only for weights of the form $\omega=(1+\rho_A)^\eta$, $\eta>1$.

\begin{lemma}\label{irr}
Let $V_0 \subset L^2(\R^n)$ be a closed linear subspace such that:
\begin{enumerate}[(i)]
\item
$V_0$ has a Riesz basis $E^{\Z^n}(\vp^1,\ldots,\vp^N)$ with each $\vp^j \in L^2(\omega)$, where ,
\item
$V_0$ is shift invariant under some lattice $\Gamma \subset \R^n$, i.e., $T_k(V_0) \subset V_0$ for $k\in \Gamma$.
\end{enumerate}
Then, $\Gamma$ is a rational lattice. That is, there exists $n\times n$ invertible rational matrix $P$ such that $\Gamma=P\Z^n$.
\end{lemma}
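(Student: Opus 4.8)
The plan is to reduce the statement to showing that $V_0$ admits \emph{no} nontrivial continuous translation symmetry, and then to read off rationality from the structure of closed subgroups of $\Rn$. Consider the full symmetry group
\[
G=\{y\in\Rn : T_y V_0 = V_0\}.
\]
Strong continuity of $y\mapsto T_y$ together with closedness of $V_0$ shows that $G$ is a closed subgroup, and since $\Zn$ and $\Gamma$ are themselves groups, hypotheses (i)--(ii) upgrade inclusions to equalities and give $\Zn\subset G$ and $\Gamma\subset G$. Let $\Lambda$ be the subspace from Lemma~\ref{esi}; one checks that $\Lambda$ is exactly the identity component of $G$, namely the union of all lines through the origin contained in $G$. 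If $\Lambda=\{0\}$, then $G$ is a discrete subgroup of $\Rn$ containing the full-rank lattice $\Zn$, hence $G$ is itself a full-rank lattice with $m:=[G:\Zn]<\infty$; then $mG\subset\Zn$, so $G\subset\frac1m\Zn$ is rational, and therefore $\Gamma\subset G$ consists of rational vectors. Being a full-rank lattice, $\Gamma=P\Zn$ for an invertible rational $P$. Thus everything comes down to proving $\Lambda=\{0\}$.

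Suppose instead $r:=\dim\Lambda\ge 1$. First I would replace the Riesz basis by a Parseval frame $\Phi=\{\vp^1,\dots,\vp^N\}$ of $V_0$ by applying $\mathbf G^{-1/2}$ to the fibers, where $\mathbf G$ is the (bounded, boundedly invertible) Gramian of $E^{\Zn}(\vp^1,\dots,\vp^N)$. Its entries lie in the inverse-closed algebra $K_\omega$, so $\mathbf G^{-1/2}$ does too, and by the multiplier property \eqref{muh} the new generators still satisfy $\widehat{\vp^j}\in H_\omega$. Writing $\sigma=\sum_{\vp\in\Phi}|\widehat\vp|^2$ for the spectral function of $V_0$ and $L_\perp=\Zn\cap\Lambda^\perp$, Lemma~\ref{esi} gives
\[
D(\xi):=\sum_{\vp\in\Phi}\sum_{k\in L_\perp}|\widehat\vp(\xi+k)|^2=\sum_{k\in L_\perp}\sigma(\xi+k)\in\N\cup\{0\}\qquad\text{a.e. }\xi.
\]

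The crux --- and the step I expect to be the main obstacle --- is to upgrade ``integer-valued a.e.'' to ``constant.'' Here I would follow the proof of Lemma~\ref{esi}: with $\Gamma_j=\Zn+2^{-j}(\Zn\cap\Lambda)$ one has $D=\lim_j \dim_{V_0}^{\Gamma_j}$ as a nonincreasing limit, and $\dim_{V_0}^{\Gamma_j}(\xi)=\sum_{k\in\Gamma_j^*}\sigma(\xi+k)$. Each of these is a periodization of $\sigma$ over the \emph{full-rank} rational lattice $\Gamma_j^*$, and this is precisely where the hypothesis $\vp^j\in L^2(\omega)$ enters: the Coifman--Meyer estimate (Lemmas~\ref{CM} and \ref{cm}) forces the periodization of an $H_\omega$-generated system to lie in $K_\omega\subset C(\Tn)$, so $\dim_{V_0}^{\Gamma_j}$ is \emph{continuous}. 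A continuous, $\N\cup\{0\}$-valued, $\Gamma_j^*$-periodic function on the connected torus $\Rn/\Gamma_j^*$ is constant, say $\equiv c_j$; since $(c_j)$ is a nonincreasing sequence of nonnegative integers it stabilizes, and hence $D\equiv c$ for some $c\in\N\cup\{0\}$. Making the continuity of these periodizations fully rigorous for the sublattices $\Gamma_j^*$ rather than $\Zn$ is the technical heart of the matter, and will require a rational change of variables to bring Lemma~\ref{CM} to bear.

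Finally I would derive a contradiction from $D\equiv c$. If $c=0$, then the $k=0$ term already forces $\sigma\equiv 0$, so $V_0=\{0\}$, contradicting that $\Phi$ spans a nonzero space with $N\ge 1$. If $c\ge 1$, unfolding the $L_\perp$-periodization yields
\[
\sum_{\vp\in\Phi}\|\vp\|^2=\int_{\Rn}\sigma(\xi)\,d\xi=\int_{\Rn/L_\perp}D(\xi)\,d\xi=c\cdot\vol(\Rn/L_\perp).
\]
Because $r\ge 1$, the lattice $L_\perp$ has rank at most $n-1$, so $\vol(\Rn/L_\perp)=\infty$; with $c\ge1$ the right-hand side is infinite, contradicting $\vp\in L^2(\Rn)$. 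Both cases being impossible, $\Lambda=\{0\}$, and the reduction of the first paragraph completes the proof.
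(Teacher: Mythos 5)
Your proposal is correct and follows essentially the same route as the paper's proof: orthonormalize the generators using the inverse-closedness of $K_\omega$, pass to the maximal invariant subspace $\Lambda$ and apply Lemma \ref{esi}, use the Coifman--Meyer estimate (Lemmas \ref{CM} and \ref{cm}) to upgrade the a.e.\ integer-valued periodization over $\Z^n\cap\Lambda^\perp$ to a continuous, hence constant, function, and then contradict finiteness. The only immaterial deviations are that the paper obtains constancy directly from uniform convergence on compacta of the series $\sum_{k\in\Z^n\cap\Lambda^\perp}\sum_j|\widehat{\vp^j}(\xi+k)|^2$ rather than through the tori $\R^n/\Gamma_j^*$ (no rational change of variables is needed, since $\Gamma_j^*\subset\Z^n$ makes these sub-sums of the $\Z^n$-sum), and it derives the final contradiction from $\sum_{k\in\Z^n}\sum_j|\widehat{\vp^j}(\xi+k)|^2=N<\infty$ summed over the infinitely many cosets of $\Z^n\cap\Lambda^\perp$ rather than from $\sum_j\|\vp^j\|^2<\infty$.
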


\begin{proof}
By \cite[Lemme 16]{Lem2}, the assumption (i) implies the existence of an orthonormal basis $E^{\Z^n}(\psi^1,\ldots,\psi^N)$ of $V_0$ with each $\psi^j \in L^2(\omega)$. 
Indeed, this follows from the fact the $N\times N$ auto-correlation matrix $M(\xi)$, which is given in two equivalent forms by
\[
\begin{aligned}
M(\xi) & = \begin{bmatrix} \sum_{k\in \Z^n} \lan \vp^l, T_k \vp^j \ran e^{-2\pi i \lan k, \xi \ran} \end{bmatrix}_{l,j=1,\ldots,N}
\\
 & = \begin{bmatrix} \sum_{k\in \Z^n}  \widehat{\vp^l}(\xi+k)  \ov{ \widehat{\vp^j}(\xi+k)} \end{bmatrix}_{l,j=1,\ldots,N}
\qquad\text{for } \xi \in \T^n=\R^n/\Z^n,
\end{aligned}
\]
has entries in the algebra $K_\omega$. Since $M(\xi)$ is invertible for all $\xi\in \T^n$ and $K_\omega$ is an inverse closed algebra, the matrix function $N(\xi)=M(\xi)^{-1/2}$ also has entries in $K_\omega$ as a consequence of the following matrix identity for positive definite matrices $M$
\[
M^{-1/2} = \frac{2}{\pi} \int_0^\infty (\mathbf I + t^2 M)^{-1} dt.
\]
Define a generator $\psi^i$ by 
\[
\widehat{\psi^i}(\xi) = \sum_{j=1}^n N_{i,j}(\xi) \widehat{\vp^{j}(\xi)}.
\]
Then, it follows that $E^{\Z^n}(\psi^1,\ldots,\psi^N)$ is an orthonormal basis of $V_0$ with each $\psi^i \in L^2(\omega)$.
Therefore, without loss of generality we can assume that $E^{\Z^n}(\vp^1,\ldots,\vp^N)$ is an orthonormal basis of $V_0$.  This implies that
\begin{equation}\label{irr4}
 \sum_{i=1}^N \sum_{k\in \Z^n} |\widehat{\vp^i}(\xi+k)|^2 =N \qquad\text{for }\xi\in \R^n.
\end{equation}

On the contrary, suppose that $V_0$ is invariant under shifts of a lattice $\Gamma$, which is not rational. Thus, the closure of $\Z^n+\Gamma$ contains a non-trivial subspace. Let $\Lambda$ be the maximal subspace contained in $\ov{\Z^n+\Gamma}$. Thus, $V_0$ is closed under translates in $\Lambda$, which is given by \eqref{esi1}. By Lemma \ref{esi} 
\begin{equation}\label{irr5}
\sum_{j=1}^N \sum_{k\in \Z^n \cap \Lambda^\perp} |\widehat {\vp^j}(\xi+k)|^2 \in \N\cup\{0\} \qquad\text{for a.e. }\xi \in \R^n.
\end{equation}
Let $K$ be any compact subset of  $ \R^n$. Take $g\in C^\infty$ with compact support such that $g \equiv 1$ on $K$.
Hence, for all $\xi\in K$ we have
\[
|\widehat {\vp^j}(\xi+k)|^2 \le  ||T_{-k} (\widehat {\vp^j}) g||_\infty^2
=  ||\widehat {\vp^j} T_k g||_\infty^2.
\]
Since $\widehat{\vp^j} \in H_\omega$, by Lemma \ref{cm} we have
\[
\sum_{k\in \Z^n \cap \Lambda^\perp} |\widehat {\vp^j}(\xi+k)|^2
\le 
 \sum_{k\in \Z^n \cap \Lambda^\perp}  ||\widehat {\vp^j} T_k g||_\infty^2
\le \sum_{k\in \Z^n}  ||\widehat {\vp^j} T_k g||_\infty^2 \le C || \widehat {\vp^j} ||_\infty^2  <\infty.
\]
Since functions $\widehat{\vp^j}$ are continuous, this implies that the series in \eqref{irr5} converges uniformly on compact sets to a continuous function. Consequently, for some $m\in \N$ we have
\begin{equation}\label{irr6}
\sum_{j=1}^N \sum_{k\in \Z^n \cap \Lambda^\perp} |\widehat {\vp^j}(\xi+k)|^2 = m \qquad\text{for all }\xi \in \R^n.
\end{equation}
This contradicts \eqref{irr4} since $\Lambda^\perp$ is a proper subspace of $\R^n$ and hence the quotient group $\Z^n/(\Z^n \cap \Lambda^\perp)$ is infinite. Therefore, the lattice $\Gamma$ must be rational.
\end{proof}

Finally, we need a simple variant of a lemma due to Hoover and the author \cite[Lemma 4.1]{BH}.

\begin{lemma}\label{exp}
Suppose that $\Psi=\{\psi^1,\ldots, \psi^L\} \subset L^2(\R^n)$ is an  $(A,\Z^n)$-Riesz wavelet 
such that the spaces of negatives dilates
\[
V_0= \ov{\spa}\{ D_{A^j} T_k \psi^l: j<0, k\in \Z^n, l=1,\ldots,L\}
\]
is $\Z^n$-SI. Then, $V_0$ is $\Ga$-SI, where $\Ga=A\Z^n + \Z^n$.
\end{lemma}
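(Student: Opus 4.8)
The plan is to exploit the self-similar structure of the space of negative dilates, following the strategy of \cite[Lemma 4.1]{BH}. Since $V_0$ is already assumed to be $\Z^n$-SI and the lattice $\Ga=A\Z^n+\Z^n$ is generated as a group by $\Z^n$ together with $A\Z^n$, and since $T_{Am'+k}=T_{Am'}T_k$, it suffices to prove the single family of inclusions $T_{Am'}V_0\subseteq V_0$ for every $m'\in\Z^n$. To organize the argument I would introduce the wavelet spaces $W_j=\ov{\spa}\{D_{A^j}T_k\psi^l:k\in\Z^n,\ l=1,\ldots,L\}=D_{A^j}(W_0)$, so that by definition $V_0=\ov{\spa}\{W_j:j<0\}$.

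The crucial step is to show that $V_1:=D_{A}(V_0)$ is itself $\Z^n$-SI. Since $D_A$ is unitary, it commutes with closures of spans, so after the reindexing $i=j+1$ we obtain
\[
V_1=D_A(V_0)=\ov{\spa}\{D_{A^{i}}T_k\psi^l:i\le 0\}=\ov{V_0+W_0}.
\]
Now $V_0$ is $\Z^n$-SI by hypothesis and $W_0$ is $\Z^n$-SI by construction; since the translation operators are unitary homeomorphisms, the closed sum $\ov{V_0+W_0}$ is again $\Z^n$-SI. This is exactly the point where the special structure of $V_0$ is used, and I expect it to be the main (if mild) obstacle: for a \emph{generic} $\Z^n$-SI space $U$ the dilate $D_A(U)$ is only $A^{-1}\Z^n$-SI, so the desired conclusion would fail in general. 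It is precisely the fact that $D_A(V_0)$ differs from $V_0$ only by adjoining the $\Z^n$-SI piece $W_0$ that restores the full $\Z^n$-invariance of $V_1$.

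To finish I would use the commutation identity $D_{A^{-1}}T_{m'}D_A=T_{Am'}$, which is a direct computation from $T_{m'}D_A=D_AT_{Am'}$. Fix $f\in V_0$ and $m'\in\Z^n$. Then $D_Af\in D_A(V_0)=V_1$, so by the $\Z^n$-invariance of $V_1$ established above we have $T_{m'}(D_Af)\in V_1$, and consequently
\[
T_{Am'}f=D_{A^{-1}}T_{m'}D_Af\in D_{A^{-1}}(V_1)=D_{A^{-1}}D_A(V_0)=V_0.
\]
Thus $T_{Am'}V_0\subseteq V_0$ for all $m'\in\Z^n$, and combined with the assumed $\Z^n$-invariance this yields $T_\gamma V_0\subseteq V_0$ for all $\gamma\in\Ga=A\Z^n+\Z^n$, i.e.\ $V_0$ is $\Ga$-SI. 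Note that the Riesz wavelet hypothesis enters only to guarantee that $V_0$ and the spaces $W_j$ are well defined; the argument itself uses only the self-similar structure of the negative dilates together with the two shift-invariance facts above.
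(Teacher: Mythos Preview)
Your proof is correct and follows essentially the same approach as the paper: both arguments show that $V_1=D_A(V_0)=\ov{V_0+W_0}$ is $\Z^n$-SI (as the sum of two $\Z^n$-SI spaces) and then transfer this back to $V_0$ via dilation. The only cosmetic difference is that the paper phrases the last step as ``$V_1$ is both $A^{-1}\Z^n$-SI and $\Z^n$-SI, hence $(A^{-1}\Z^n+\Z^n)$-SI, so $V_0=D_{A^{-1}}(V_1)$ is $(A\Z^n+\Z^n)$-SI,'' whereas you unwind the same commutation relation $T_{m'}D_A=D_AT_{Am'}$ explicitly.
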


\begin{proof}
Since $V_0$ is $\Z^n$-SI, the space $V_1:=D_A(V_0)$ is $A^{-1}\Z^n$-SI. Since
\bq\label{exp1}
V_1= V_0 + W_0, \qquad\text{where }W_0= \ov{\spa} E^{\Z^n}(\Psi),
\eq
the space $V_1$ is $\Z^n$-SI as well. Hence, $V_1$ is $(A^{-1}\Z^n+ \Z^n)$-SI. Consequently, $V_0$ is $(A\Z^n + \Z^n)$-SI.
\end{proof}

We are now ready to prove Theorem \ref{mb}.

\begin{proof}[Proof of Theorem \ref{mb}] 
By Theorem \ref{lr}, the shift-invariant projection $P_0$, given by \eqref{lr3}, is an integral operator with $\omega$-localized kernel, where $\omega=(1+\rho_A)^\eta$ and $\eta>1$. By Theorem \ref{ipt}, $V_0$ has a Riesz basis $E^{\Z^n}(\vp^1,\ldots,\vp^N)$ with each $\vp^j \in L^2(\omega)$. By the assumption that the space $V_0$ is $\Z^n$-SI and Lemma \ref{exp}, the space $V_0$ is also $A\Z^n$-SI. By Lemma \ref{irr}, $A\Z^n$ is a rational lattice. Consequently, $A$ is a rational matrix.
\end{proof}

\bibliographystyle{amsplain}

\end{document}